\documentclass[12pt]{article}
\usepackage[utf8]{inputenc}
\usepackage[english]{babel}
\usepackage{amsmath,amsthm,amsfonts,amssymb,mathtools}
\usepackage{enumerate,graphicx,xcolor}
\usepackage{lmodern}
\usepackage[T1]{fontenc}
\usepackage{comment}
\usepackage{subcaption}
\usepackage{tikz}
\usepackage{macros}
\usepackage{dsfont}
\usepackage[left=2cm,top=2.5cm,right=2cm,bottom=2.5cm]{geometry}
\usepackage[activate={true,nocompatibility},final,tracking=true,kerning=true,spacing=true,factor=1100,stretch=10,shrink=10]{microtype}
\usepackage{tikz}
\usetikzlibrary{calc}

\usepackage{xcolor}
\usepackage{mdframed}
\usepackage[shortlabels]{enumitem}
\usepackage{indentfirst}
\usepackage{hyperref}
\usepackage{stackrel}
\usepackage[dvipsnames,svgnames,table]{xcolor}
\usepackage{colortbl}

\title{Combinatorial formulas for the multivariate Markov-Krein transform}
\author{ Julián Zazueta-Obeso \\ 
julian.zazueta@cimat.mx\\ 
CIMAT, Guanajuato}
\date{ }

\begin{document}

\maketitle
\begin{abstract}
In this work, we study the Markov--Krein correspondence in both the univariate and multivariate settings. Our main result provides an explicit moment--moment formula expressing the multivariate moments of the inverse Markov--Krein transform in terms of the multivariate moments of the Markov-Krein transform via cyclic interval partitions. We also derive moment--cumulant relations involving classical, Boolean and free cumulants.\\

\noindent\textsc{Keywords:} 
Markov--Krein correspondence, cyclic interval partitions, moments, cumulants. 
\end{abstract}

\section{Introduction}
The Markov--Krein correspondence provides a nonlinear relation between signed measures on the real line and probability measures through analytic transforms. Given a Rayleigh measure $\tau$ on $\mathbb{R}$, its Markov--Krein transform $\mu$ is characterized by the identity
\begin{equation}\label{eq:MK-def}
\int_{\mathbb{R}} \frac{\mu(du)}{z-u}
=
\exp\left(
-\int_{\mathbb{R}} \log(z-u)\,\tau(du)
\right),
\qquad \Im z>0,
\end{equation}
or, equivalently, by the analytic relation between the corresponding Cauchy transforms,
\begin{equation}\label{eq:MK-cauchy}
G_\mu'(z)+G_\mu(z)G_\tau(z)=0.
\end{equation}
This correspondence has been systematically studied in several mathematical contexts. In particular, it arises from the fundamental contributions in the representation of the moment problem \cite{kreuin1977markov} and appears in exponential representations of functions with negative imaginary part \cite{OnExponential}, in spectral shift functions of self-adjoint operators \cite{harary2014graphical,flajolet1980combinatorial}, in the study of Young diagrams \cite{yongdiagrams-kerov} and in the context of interlacing measures \cite{interlacingmeasures}.

In the univariate setting, the analytic relation that defines the Markov--Krein correspondence translates into explicit identities between the moments of $\mu$ and those of $\tau$
\begin{equation}\label{eq:momentos univariados}
    \sum_{n=0}^\infty h_nz^n=\exp\left(\sum_{n=1}^\infty\frac{p_n}{n}z^n\right),
\end{equation}
where $h_n=\int_{-\infty}^\infty t^n\mu(dt)$ and $p_n=\int_{-\infty}^\infty t^n\tau(dt)$ are the $n$-th moments of $\mu$ and $\tau$, respectively.

These moment relations admit a combinatorial description and are closely connected to cumulant structures arising in non-commutative probability, see \cite{arizmendi,arizmendi2023finite,arizmendi2016free,katsunori}. In particular, the inverse Markov--Krein correspondence admits moment formulas expressed in terms of free cumulants 
\begin{equation}\label{p_n-r_n univariate}
    p_n=\sum_{\pi\in NC(n)}(n-1-\#(\pi))r_\pi^{\mu},
\end{equation}
where $r_n^\mu$ represents the $n$-th free cumulant corresponding to $\mu$, formula that was obtained in the works of T. Hasebe and K. Fujie \cite{katsunori} related to rotationally invariant Hermitian random matrices.

 Moreover, these moments can be interpreted from the perspective of operator theory as follows. If we consider \(A\) as a bounded self-adjoint operator on a separable Hilbert space \(\mathcal{H}\), and \(\xi \in \mathcal{H}\) a unit vector, then we may define the \(n\)-th moment of the Markov--Krein transform \(h_n\), and the \(n\)-th moment of the inverse Markov--Krein transform \(p_n\), as follows
\begin{equation}\label{h_n and p_n in operator theory}
h_n = \langle A^n \xi, \xi \rangle \qquad \text{and} \qquad p_n = \operatorname{Tr}\big(A^n - (q Aq)^n \big), \qquad \text{for all } n \in \mathbb{N},
\end{equation}
where $q:\mathcal{H}\longrightarrow\mathcal{H}$ denotes the orthogonal projection onto the hyperplane \(\mathcal{H}_\xi\) in \(\mathcal{H}\) orthogonal to the vector \(\xi\), which is defined by
    \begin{equation*}
        q(\eta)=\eta-p(\eta)=\eta-\left<\eta,\xi\right>\xi,\qquad \text{for all }\eta\in \mathcal{H}.
    \end{equation*}
These formulas originate primarily in the works of I. M. Lifshits \cite{lifshits1952problem}, were further developed by M. G. Krein \cite{Krein-teoria-operadores}.

Recent developments in non-commutative probability, motivated by relation \eqref{h_n and p_n in operator theory}, have led to the introduction of a multivariate version of the Markov--Krein correspondence. This multivariate framework appears in two parallel works developed in different probabilistic notions, namely in the theory of free independence of type $B$ prime \cite{katsunori2023} and in the theory of cyclic conditional independence \cite{arizmendi}. More precisely, if $\mathcal{A}\subset \mathcal{B}(\mathcal{H})$ is a
\(\ast\)-subalgebra, the functional
\(h_n:\mathcal A^n\to\mathbb C\) defined by
\begin{equation}\label{ec-2}
h_n(A_1,\ldots,A_n)
=
\langle A_1\cdots A_n\xi,\xi\rangle,
\end{equation}
is called the \(n\)-th multivariate moment of the Markov--Krein transform of $h$,
since it extends the univariate formula in
\eqref{h_n and p_n in operator theory}. Similarly, the functional
\(p_n:\mathcal A^n\to\mathbb C\), defined by
\begin{equation}\label{ec-1}
    p_n(A_1,\ldots,A_n)= \mathrm{Tr}(A_1\cdots A_n-q A_1 q\cdots q A_n q),
\end{equation}
is called the \(n\)-th multivariate moment of the inverse Markov--Krein transform of $h$. With this multivariate extension, the authors of the independent works \cite{katsunori2023} and \cite{arizmendi} generalized formula \eqref{p_n-r_n univariate} to the multivariate setting.

The main result of this paper establishes that multivariate moments $p_n$ in \eqref{ec-1} can be expressed in terms of moments $h_n$ in \eqref{ec-2} through sums indexed by cyclic interval partitions,
\begin{equation}\label{eq:cyclic-partitions}
p_n(A_1,\ldots,A_n)
=
\sum_{\pi\in \mathcal{CI}(n)}(-1)^{\#(\pi)-1} h_\pi[A_1,\ldots,A_n],
\end{equation}
where $\mathcal{CI}(n)$ denotes the set of cyclic intervals partitions of $\{1,2,\ldots,n\}$.

The main purpose of this paper is to study these moment relations \eqref{eq:momentos univariados} and \eqref{eq:cyclic-partitions} in both univariate and multivariate settings, respectively. We establish explicit moment--moment formulas for the Markov--Krein correspondence and derive expressions for the moments of its inverse $p_n$ in terms of the different types of cumulants, classical, boolean and free, corresponding to the Markov--Krein transform of $h$, obtaining the multivariate version of \eqref{p_n-r_n univariate}. 

Another contribution of this work is to derive alternative analytic expressions for the equation \eqref{eq:MK-cauchy} in the univariate case, in order to obtain alternative representations and investigate whether such relations can be extended to the multivariate setting. However, this remains an open problem, as it has only been possible to characterize the Markov--Krein correspondence in the multivariate case through combinatorial relations.

Finally, we present some remarks of surprising connections with related works. In particular, we relate our results to the recent work of Mingo and Tseng \cite{mingo2024infinitesimal}, highlighting structural similarities between the Markov--Krein correspondence and the Boolean-like cumulants introduced in the context of infinitesimal freeness. Moreover, we establish a connection with finite free probability and infinitesimal distributions, as developed in the work of Arizmendi, Garza and Perales \cite{arizmendi2023finite}.

The paper is organized as follows. In Section 2, we introduce the main tools used throughout the paper, including basic notions of non-commutative probability, partitions, and cumulants. In Section 3, we present the Markov-Krein correspondence, describing its analytic and combinatorial characterizations, as well as its main properties. Finally, in the last section, we present the main results of this paper, including the extension of the Markov--Krein transform to the multivariate setting, together with its important properties and connections, the moment--moment formula, and the moment--cumulant formulas.

\section{Preliminaries}
In this section, we introduce the framework of non-commutative probability, together with the combinatorial notions of partitions and cumulants that will be used throughout the paper. The main references are \cite{book:nica,mingo2017free}.
\subsection{Non-commutative probability}
\begin{definition}
    A \textbf{non-commutative probability space}, \((\mathcal{A},\varphi)\), consists of a unital algebra \(\mathcal{A}\) over \(\mathbb{C}\) and a unital linear functional
    \begin{equation*}
        \varphi : \mathcal{A}\rightarrow \mathbb{C},\quad\text{such that}\quad \varphi(1_{\mathcal{A}} )=1.
    \end{equation*}
\end{definition}
The elements \(a\in\mathcal{A}\) are called non-commutative random variables in \((\mathcal{A},\varphi)\). For simplicity, we will simply refer to them as random variables. We say that the non-commutative probability space \((\mathcal{A},\varphi)\) is \textbf{tracial} (or just $\varphi$ is tracial) if it satisfies
\begin{equation*}
    \varphi(ab)=\varphi(ba),\qquad \forall a,b\in\mathcal{A}.
\end{equation*}
A functional \(       f_n:\mathcal A^n\to\mathbb C
    \)
    is said to be \textbf{cyclically invariant} if
    \[
        f_n(a_1,\ldots,a_n)
        =
        f_n(a_2,\ldots,a_n,a_1),
        \qquad \forall \ a_1,\ldots,a_n\in\mathcal A.
    \]
\subsection{Partitions}
In this subsection we present the different families of partitions that will be used throughout this article.
\begin{definition}
Let $n\in\mathbb{N}$. We denote by $[n]$ the totally ordered set $\{1<2<\cdots<n\}$.
\begin{itemize}
    \item [(1) ] We say that $\pi=\{V_1,\ldots,V_r\}$ is a \textbf{partition} of $[n]$ if and only if $V_i$, with $1\leq i\leq r$, are non-empty subsets of $[n]$, are pairwise disjoint, and satisfy $V_1\cup V_2\cup\cdots \cup V_r=[n]$. We say that $V_1,\ldots,V_r$ are the \textbf{blocks} of $\pi$. We denote by $\#(\pi)$ the number of blocks of $\pi$. The set of all partitions of $[n]$ is denoted by $\mathcal{P}(n)$.
    
    \item [(2) ] Every partition defines an equivalence relation on $[n]$ and vice versa. Given $p,q\in [n]$ and a partition $\pi\in\mathcal{P}(n)$, we write $p\sim_\pi q$ if and only if $p$ and $q$ belong to the same block of $\pi$.
    
    \item [(3) ] We say that a partition $\pi\in\mathcal{P}(n)$ is a \textbf{crossing partition} if there exist $1\leq p_1<q_1<p_2<q_2\leq n$ such that $p_1\sim_\pi p_2$, $q_1\sim_\pi q_2$, and $p_1\nsim_\pi q_1$. Otherwise, we say that $\pi$ is a \textbf{non-crossing partition}. The set of non-crossing partitions of $[n]$ is denoted by $\mathcal{NC}(n)$.
    
    \item [(4) ] We say that a block $V$ is an \textbf{interval} if $V$ is of the form $V=\{k,k+1,\ldots,k+l\}$ for some $k\geq 1$ and $0\leq l\leq n-k$.
    
    \item [(5) ] We say that $\pi\in\mathcal{P}(n)$ is an \textbf{interval partition} if every block of $\pi$ is an interval. The set of interval partitions of $[n]$ is denoted by $\mathcal{I}(n)$.
    
    \item [(6) ] Let $\pi,\sigma\in\mathcal{P}(n)$. We say that $\pi\leq \sigma$ if every block of $\pi$ is contained in some block of $\sigma$. This partial order is called the \textbf{reverse refinement order}.
    
    \item [(7) ] Let $\pi\in\mathcal{P}(n)$. We say that $\overline{\pi}$ is the \textbf{non-crossing closure} of $\pi$ if it is the smallest non-crossing partition such that $\pi\leq \overline{\pi}$.
    
    \item [(8) ] Let $\pi\in\mathcal{P}(n)$. We say that $\pi$ is \textbf{connected} if its non-crossing closure satisfies $\overline{\pi}=1_n$, where $1_n$ is the partition consisting of a single block $1_n=\{1,2,\ldots,n\}$. The set of connected partitions is denoted by $\mathcal{P}_{conn}(n)$.
    
    \item [(9) ] Let $\pi\in\mathcal{P}(n)$. We say that $\widehat{\pi}$ is the \textbf{interval closure} of $\pi$ if it is the smallest interval partition such that $\pi\leq \widehat{\pi}$.
    
    \item [(10) ] Let $\pi\in\mathcal{P}(n)$. We say that $\pi$ is \textbf{irreducible} if its interval closure is the largest partition, that is, $\widehat{\pi}=1_n$. If $\pi\in\mathcal{NC}(n)$, saying that $\pi$ is irreducible is equivalent to the condition $1\sim_\pi n$. The sets of irreducible partitions and irreducible non-crossing partitions are denoted by $\mathcal{P}_{irr}(n)$ and $\mathcal{NC}_{irr}(n)$, respectively.
\end{itemize}
\end{definition}

\begin{definition}\label{def.kreweras}
The \textbf{Kreweras complement} is a map $Kr:\mathcal{NC}(n)\rightarrow\mathcal{NC}(n)$ defined as follows. Consider additional symbols $\overline{1},\overline{2},\ldots,\overline{n}$ and interlace them with the numbers $1,2,\ldots,n$ in the following way:
\begin{equation*}
1<\overline{1}<2<\overline{2}<\cdots<\overline{n-1}<n<\overline{n}.
\end{equation*}
For $\pi\in\mathcal{NC}(n)$, we define $Kr(\pi)\in\mathcal{NC}(\overline{1},\ldots,\overline{n})\cong \mathcal{NC}(n)$ as the largest element (with respect to the reverse refinement order) such that $\pi\cup Kr(\pi)\in\mathcal{NC}(1,\overline{1},\ldots,n,\overline{n})$.
\end{definition}

We introduce the following notation: for each \(j=2,3,\ldots,n\), let
\[
1_{n,j}:=(j,j+1,\ldots,n,1,\ldots,j-1),
\]
where the indices are understood modulo \(n\). We refer to
\(1_{n,j}\) as the cyclic ordering of \([n]\) starting at \(j\). Since \(1_{n,j}\) is regarded as an ordered tuple, different starting points determine different cyclic orderings. In particular,
\(
1_{n,j}\neq 1_{n,k},
\) whenever $j\neq k$.
\begin{definition}
Let $n\in \mathbb{N}$. An element \(\pi\) is called a \textbf{cyclic interval partition} if one of the following conditions holds:
\begin{itemize}
    \item[(1)] \(\pi =\{1_{n,j}\}\) for some \(j = 2,3,\ldots,n\);
    \item[(2)] \(\pi\in\mathcal{P}(n)\) and every block of \(\pi\) is either an interval or the complement of an interval with respect to the cyclic order on \([n]\).
\end{itemize}
We denote by \(\mathcal{CI}(n)\) the set of cyclic interval partitions of \([n]\). Equivalently,
\[
\mathcal{CI}(n)
=\left\{
\{1_{n,j} \}: j = 2,3,\ldots,n
\right\}\cup
\left\{
\pi : \pi \text{ satisfies (2)}
\right\}.
\]
\end{definition}

\begin{example}\label{Particiones_por_intervalos_ciclicas}
Some examples of cyclic interval partitions of $[n]$ are the following:

\begin{itemize}

\item[(1)] For $n=2$, the cyclic interval partitions are
\[
\pi_1 = \{\{1,2\}\},
\qquad
\pi_2 = \{(2,1)\},
\qquad
\pi_3 = \{\{1\},\{2\}\}.
\]

\item[(2)] For $n=3$, the cyclic interval partitions are
\begin{align*}
\pi_1 &= \{\{1,2,3\}\},
\qquad
\pi_2 = \{(2,3,1)\},
\qquad
\pi_3 = \{(3,1,2)\},
\\
\pi_4 &= \{\{1,2\},\{3\}\},
\qquad
\pi_5 = \{\{2,3\},\{1\}\},
\qquad
\pi_6 = \{\{3,1\},\{2\}\},
\\
\pi_7 &= \{\{1\},\{2\},\{3\}\}.
\end{align*}

\item[(3)] For $n=4$, the cyclic interval partitions are
\begin{align*}
&\pi_1 = \{\{1,2,3,4\}\},
\qquad
\pi_2 = \{(2,3,4,1)\},
\qquad
\pi_3 = \{(3,4,1,2)\},
\\
&\pi_4 = \{(4,1,2,3)\},,\qquad \pi_5 = \{\{1,2,3\},\{4\}\},
\qquad
\pi_6 = \{\{2,3,4\},\{1\}\},
\\
&\pi_7 = \{\{3,4,1\},\{2\}\},
\qquad
\pi_8 = \{\{4,1,2\},\{3\}\},\qquad \pi_9 = \{\{1,2\},\{3,4\}\},\\
&\pi_{10} = \{\{2,3\},\{4,1\}\},\qquad \pi_{11} = \{\{1,2\},\{3\},\{4\}\},
\qquad
\pi_{12} = \{\{2,3\},\{4\},\{1\}\}
\\
&\pi_{13} = \{\{3,4\},\{1\},\{2\}\},
\qquad
\pi_{14} = \{\{4,1\},\{2\},\{3\}\},\qquad \pi_{15} = \{\{1\},\{2\},\{3\},\{4\}\}.
\end{align*}
\end{itemize}
\end{example}
\begin{remark}
    There is a natural bijection between cyclic interval partitions and
configurations of cuts on a circle. More precisely, place the points \(1,2,\ldots,n\) on a circle according to
their cyclic order. By selecting a subset of edges between consecutive points
and making cuts at those locations, the connected components formed between
consecutive cuts determine the blocks of a cyclic interval partition.
Conversely, every cyclic interval partition can be uniquely obtained from such
a configuration of cuts. For example,
\[
\pi=\{\{1,8,9\},\{2,3,4\},\{5,6\},\{7\}\}\in\mathcal{CI}(9)
\]
corresponds to a configuration with cuts between the blocks
\(\{1,8,9\}\), \(\{2,3,4\}\), \(\{5,6\}\), and \(\{7\}\) on the circle, see Figure \ref{fig:particiones_circulo}. 

\begin{figure}[ht]
    \centering
    \begin{tikzpicture}

        \begin{scope}[shift={(0,0)}]
            \def\r{2}
            \def\L{1}
            \def\R{2.5} 

            \draw[thick] (0,0) circle (\r);

            \foreach \k in {1,...,9}{
                \pgfmathsetmacro{\ang}{90 - (\k-1)*40}

                \coordinate (P\k) at ({\r*cos(\ang)},{\r*sin(\ang)});
                \coordinate (Q\k) at ({\R*cos(\ang)},{\R*sin(\ang)});

                \fill (P\k) circle (1.2pt);
                \node at ({0.82*\r*cos(\ang)},{0.82*\r*sin(\ang)}) {\k};
                \draw[blue, thick] (P\k) -- (Q\k);
            }

            \draw[blue, thick] (Q8) arc[start angle=170, end angle=90, radius=\R];

            \draw[blue, thick] (Q2) arc[start angle=50, end angle=-30, radius=\R];

            \draw[blue, thick] (Q5) arc[start angle=-70, end angle=-110, radius=\R];
        \end{scope}

        \begin{scope}[shift={(8,0)}]
            \def\r{2}
            \def\Lin{0.7}   
            \def\Lout{1}    
            \def\Rin{1.3}   
            \def\Rout{3}    

            \draw[thick] (0,0) circle (\r);

            \foreach \k in {1,...,9}{
                \pgfmathsetmacro{\ang}{90 - (\k-1)*40}

                \coordinate (P\k) at ({\r*cos(\ang)},{\r*sin(\ang)});

                \fill (P\k) circle (1.2pt);
                \node at ({0.82*\r*cos(\ang)},{0.82*\r*sin(\ang)}) {\k};
            }

            \draw[orange, thick] ({\Rin*cos(70)},{\Rin*sin(70)}) -- ({\Rout*cos(70)},{\Rout*sin(70)});

            \draw[orange, thick] ({\Rin*cos(-50)},{\Rin*sin(-50)}) -- ({\Rout*cos(-50)},{\Rout*sin(-50)});

            \draw[orange, thick] ({\Rin*cos(-130)},{\Rin*sin(-130)}) -- ({\Rout*cos(-130)},{\Rout*sin(-130)});

            \draw[orange, thick] ({\Rin*cos(-170)},{\Rin*sin(-170)}) -- ({\Rout*cos(-170)},{\Rout*sin(-170)});
        \end{scope}

    \end{tikzpicture}
    \caption{Partition $\pi\in\mathcal{CI}(9)$ such that 
$\pi=\{\{1,8,9\},\{2,3,4\},\{5,6\},\{7\}\}$.
On the left, we depict the partition $\pi$ by grouping elements in the same block. On the right, we show its representation by cuts on the circle.}
    \label{fig:particiones_circulo}
\end{figure}
\end{remark}
\begin{lemma}
The cardinality of the set $\mathcal{CI}(n)$ is
\[
|\mathcal{CI}(n)| = 2^n - 1.
\]
\end{lemma}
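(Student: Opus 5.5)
We plan to count $\mathcal{CI}(n)$ by splitting it into the two families of the definition and using the circle-cut bijection from the preceding Remark. Place $1,\dots,n$ on a circle. There are exactly $n$ gaps between cyclically consecutive points, namely $(1,2),(2,3),\dots,(n-1,n),(n,1)$. A cyclic interval partition of type (2) is determined by the set $S$ of gaps at which we cut, and then we count subsets $S$ of these $n$ gaps.

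First we handle $|S|\ge 1$. We will check that cutting at a nonempty set $S$ of gaps yields a partition whose blocks are the arcs between consecutive cuts. Each arc is either an interval of $[n]$, or it contains the gap $(n,1)$, in which case it is the complement of an interval. So this is a partition satisfying (2). Conversely, given $\pi\in\mathcal{P}(n)$ with at least two blocks satisfying (2), every block is a cyclic arc. The gaps separating different blocks form a nonempty set $S$, and cutting at $S$ recovers $\pi$. This gives a bijection between partitions satisfying (2) other than $1_n$ and the nonempty subsets $S$, except for the case $|S|=1$. A single cut produces one block equal to all of $[n]$, so every $|S|=1$ gives the same set partition $1_n$ but read with the cyclic starting point just after the cut. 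This is precisely where the ordered tuples $1_{n,j}$ enter. The cut at gap $(n,1)$ gives $\{1,\dots,n\}=1_n$, which lies in family (2). The cut at gap $(j-1,j)$ for $j=2,\dots,n$ gives $1_{n,j}$, which lies in family (1).

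Then we tally. Subsets with $|S|\ge2$ correspond to partitions in (2) with at least two blocks, giving $2^n-1-n$ of them. The $n$ singletons $S$ correspond to $1_n$ together with the $n-1$ elements $1_{n,j}$, which are pairwise distinct and distinct from $1_n$ as ordered objects. The empty set $S$ gives no valid configuration: with no cut there is no starting point, and it must be excluded to avoid double counting $1_n$. The total is therefore $(2^n-1-n)+n=2^n-1$.

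The main obstacle is making the bijection precise at the boundary cases. We need to justify that two distinct cut sets with $|S|\ge2$ give distinct set partitions, which holds because the separating gaps are read off from $\pi$. We also need to handle the ordered elements $1_{n,j}$ carefully so that family (1) and family (2) are disjoint apart from our identification of $1_n$ with the cut at $(n,1)$. Finally, we will sanity-check the count against the Example: $n=2,3,4$ give $3,7,15$.
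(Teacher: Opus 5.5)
Your proposal is correct and follows the paper's route: you identify cyclic interval partitions with nonempty sets of cuts among the $n$ gaps on the circle, which gives $\sum_{k=1}^{n}\binom{n}{k}=2^n-1$. Your explicit handling of the single-cut case fills in a detail the paper leaves implicit: the cut at $(n,1)$ gives $1_n$, and the cuts at $(j-1,j)$ give the ordered blocks $1_{n,j}$, which is exactly why the count is $2^n-1$ rather than $2^n-n$.
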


\begin{proof}
Since cyclic interval partitions can be identified with configurations of cuts on the circle, counting the number of such partitions is equivalent to counting the number of ways to place cuts between consecutive points on the circle. There are $n$ possible locations where a cut can be placed, corresponding to the $n$ gaps between consecutive points in the cyclic order. A cyclic interval partition is obtained by choosing any nonempty subset of these $n$ possible cuts. Thus, the total number of cyclic interval partitions is
\[
|\mathcal{CI}(n)|
=
\sum_{k=1}^{n}
\binom{n}{k}
=
2^n - 1.
\]
\end{proof}

\begin{remark}\label{remark-cyclic-partitions}
Let $\widehat{\mathcal{CI}}(n)$ denote the set of cyclic interval partitions obtained by removing the elements $\{1_{n,2}\},\{1_{n,3}\},\ldots,\{1_{n,n}\}$ from $\mathcal{CI}(n)$, that is,
\[
\widehat{\mathcal{CI}}(n) = \mathcal{CI}(n) \setminus \{\{1_{n,2}\},\{1_{n,3}\},\ldots,\{1_{n,n}\}\}.
\]
This family coincides with the class of cyclic interval partitions considered in several works \cite{arizmendi2023cyclic,diaz2022combinatorics,mingo2024infinitesimal}. The main difference is that, in our convention, we treat the block of length $n$ as an ordered interval, and hence there are $n$ possible starting positions. Moreover, in their context, the cardinality of $\widehat{\mathcal{CI}}(n)$ is $2^{n}-n$. Also,  $\widehat{\mathcal{CI}}(n)$ is a poset and admits an explicit  Möbius function, $\mu_{\widehat{\mathcal{CI}}(n)}([0_n,1_n])=(-1)^{n+1}(n-1)$, see \cite{diaz2022combinatorics}.
\end{remark}

\subsection{Cumulants}
Let $(\mathcal{A},\varphi)$ be a non-commutative probability space, and let
\(
(f_n)_{n\geq1}\), \(f_n:\mathcal{A}^n\to\mathbb{C},
\)
be a family of multilinear functionals. For each $n\geq1$ and each partition $\pi\in\mathcal{P}(n)$, we define an associated multilinear functional
\( f_\pi:\mathcal{A}^n\to\mathbb{C}
\)
as follows. If
\(
\pi=\{V_1,\ldots,V_r\}\in\mathcal{P}(n),
\)
where each block is written in increasing order as
\(
V_j=\{i_{j_1}<\cdots<i_{j_{|V_j|}}\},
\)
then we define
\begin{equation*}
f_\pi[a_1,\ldots,a_n]
:=f_{|V_1|}(a_{1_1},\ldots,a_{1_{|V_1|}})
    \cdots
    f_{|V_r|}(a_{r_1},\ldots,a_{r_{|V_r|}}).
\end{equation*}
The functionals $f_\pi$ extend the family $(f_n)_{n\geq1}$, since for the one-block partition $1_n\in\mathcal{P}(n)$ we recover
\begin{equation*}
f_{1_n}[a_1,\ldots,a_n]
=
f_n(a_1,\ldots,a_n),
\end{equation*}
for every $n\geq1$ and every $a_1,\ldots,a_n\in\mathcal{A}$. In particular, the linear functional $\varphi$ induces, for each $n\geq1$, the multilinear functional
\(
(a_1,\ldots,a_n)\mapsto \varphi(a_1\cdots a_n),
\)
so that $\varphi_\pi$ is well defined. Similarly, the extensions of the classical, Boolean, and free cumulants
\(
(c_n)_{n\geq1}\), \(
(b_n)_{n\geq1}\), \(
(r_n)_{n\geq1},
\)
are given by the families
\(
(c_\pi)_{\pi\in\mathcal{P}(n)}\), \((b_\pi)_{\pi\in\mathcal{I}(n)}\), \((r_\pi)_{\pi\in\mathcal{NC}(n)},
\)
which are defined analogously.
\begin{definition}
Let $(\mathcal A,\varphi)$ be a noncommutative probability space.
\begin{itemize}
    \item [(1) ]For each $n\geq 1$, the \textbf{multivariate classical cumulants}
    $(c_\pi)_{\pi\in\mathcal P(n)}$, studied in \cite{rota1964foundations,Schutzenberger1947}, are defined implicitly by
    \[
    \varphi_\pi[a_1,\dots,a_n]
    =
    \sum_{\substack{\sigma\in\mathcal P(n)\\ \sigma\leq \pi}}
    c_\sigma[a_1,\dots,a_n],
    \qquad \pi\in\mathcal P(n).
    \]

    \item [(2) ]For each $n\geq 1$, the \textbf{multivariate Boolean cumulants}
    $(b_\pi)_{\pi\in\mathcal I(n)}$, introduced in \cite{speicher1997boolean}, are defined implicitly by
    \[
    \varphi_\pi[a_1,\dots,a_n]
    =
    \sum_{\substack{\sigma\in\mathcal I(n)\\ \sigma\leq \pi}}
    b_\sigma[a_1,\dots,a_n],
    \qquad \pi\in\mathcal I(n).
    \]

    \item [(3) ]For each $n\geq 1$, the \textbf{multivariate free cumulants}
    $(r_\pi)_{\pi\in\mathcal{NC}(n)}$, introduced in \cite{speicher1994multiplicative}, are defined implicitly by
    \[
    \varphi_\pi[a_1,\dots,a_n]
    =
    \sum_{\substack{\sigma\in\mathcal{NC}(n)\\ \sigma\leq \pi}}
    r_\sigma[a_1,\dots,a_n],
    \qquad \pi\in\mathcal{NC}(n).
    \]
\end{itemize}
\end{definition}
The identities above are referred to as the moment--cumulant formulas. Let $\mu_{\mathcal P(n)}$, $\mu_{\mathcal I(n)}$, and $\mu_{\mathcal{NC}(n)}$ denote the Möbius functions on the lattices $\mathcal P(n)$, $\mathcal I(n)$, and $\mathcal{NC}(n)$, respectively. In particular, for the interval $[\sigma,1_n]$, one has
\begin{align*}
    &\mu_{\mathcal P(n)}(\sigma,1_n)=(-1)^{\#(\sigma)-1}(\#(\sigma)-1)!,\qquad \forall\ \sigma\in\mathcal{P}(n),\\ 
    &\mu_{\mathcal I(n)}(\sigma,1_n)=(-1)^{\#(\sigma)-1},\qquad \forall \ \sigma\in\mathcal{I}(n),\\
    &\mu_{\mathcal{NC}(n)}(\sigma,1_n)=(-1)^{\#(\sigma)-1}C_{\#(\sigma)-1},\qquad \forall \ \sigma\in\mathcal{NC}(n),
\end{align*}
where $C_m=\frac{1}{m+1}\binom{2m}{m}$ denotes the $m$-th Catalan number. More generally, the values $\mu(\sigma,\pi)$ for arbitrary intervals $[\sigma,\pi]$ factor as products of Möbius functions on smaller full lattices, since every such interval in the lattices considered here is isomorphic to a direct product of full lattices of lower orders, see \cite{aigner2012combinatorial,book:nica,stanley2011enumerative}. Therefore, by Möbius inversion, the relations in the previous definition can be inverted, yielding explicit expressions for the classical, Boolean, and free cumulants in terms of moments.
\begin{equation*}
    c_\pi[a_1,\ldots,a_n]=\sum_{\substack{ \sigma\in\mathcal{P}(n)\\ \sigma\leq \pi}} 
    \varphi_\sigma[a_1,\ldots,a_n]\,
    \mu_{\mathcal{P}(n)}(\sigma,\pi),\qquad \pi\in\mathcal{P}(n),
\end{equation*}
\begin{equation*}
        b_\pi[a_1,\ldots,a_n]=\sum_{\stackbin[\sigma\leq \pi]{\sigma\in\mathcal{I}(n)}{}}\varphi_\sigma[a_1,\ldots,a_n]\mu_{\mathcal{I}(n)}(\sigma,\pi),\qquad \pi\in\mathcal{I}(n),
    \end{equation*}
\begin{equation*}
        r_\pi[a_1,\ldots,a_n]=\sum_{\stackbin[\sigma\leq \pi]{\sigma\in\mathcal{NC}(n)}{}}\varphi_\sigma[a_1,\ldots,a_n]\mu_{\mathcal{NC}(n)}(\sigma,\pi),\qquad \pi\in\mathcal{NC}(n).
    \end{equation*}
The next theorem will play an important role in the following sections.
    \begin{theorem}[Lehner \cite{lehner2002free}]
        Let $\mu$ be a probability measure. Let $(m_n)_{n\in\mathbb{N}}$, $(c_n)_{n\in\mathbb{N}}$, $(b_n)_{n\in\mathbb{N}}$, and $(r_n)_{n\in\mathbb{N}}$ denote the moments, classical, Boolean and free cumulants of $\mu$, respectively. Then the following relations hold
\begin{equation}
    \label{41}
    r_n=\sum_{\pi\in\mathcal{P}_{conn}(n)} c_\pi,
\end{equation}
\begin{equation}
    \label{42}
    b_n=\sum_{\pi\in\mathcal{P}_{irr}(n)} c_\pi,
\end{equation}
\begin{equation}
    \label{43}
    b_n=\sum_{\pi\in\mathcal{NC}_{irr}(n)} r_\pi.
\end{equation}
    \end{theorem}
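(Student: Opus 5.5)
The plan is to derive all three identities from the moment--cumulant formulas by Möbius inversion, in the spirit of Lehner's argument. For \eqref{41}, start from
\[
m_n=\sum_{\sigma\in\mathcal{NC}(n)} r_\sigma=\sum_{\pi\in\mathcal{P}(n)} c_\pi .
\]
Group the partitions $\pi\in\mathcal{P}(n)$ according to their non-crossing closure $\overline{\pi}\in\mathcal{NC}(n)$. Thus $\sum_{\pi} c_\pi=\sum_{\sigma\in\mathcal{NC}(n)}\sum_{\pi:\overline{\pi}=\sigma}c_\pi$. The key observation is that $\overline{\pi}=\sigma$ holds exactly when the restriction of $\pi$ to each block $V$ of $\sigma$ is a connected partition of $V$, after relabelling $V$ order-preservingly. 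Hence the inner sum factorizes as $\prod_{V\in\sigma}\big(\sum_{\rho\in\mathcal{P}_{conn}(|V|)}c_\rho\big)$, using multiplicativity of $c_\pi$ over blocks. Define $\tilde r_k:=\sum_{\rho\in\mathcal{P}_{conn}(k)}c_\rho$. Then $m_n=\sum_{\sigma\in\mathcal{NC}(n)}\tilde r_\sigma$ for every $n$. The free moment--cumulant relation determines the $r_k$ uniquely, by recursion on $n$ or equivalently by Möbius inversion on $\mathcal{NC}(n)$. Therefore $\tilde r_n=r_n$.

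The identities \eqref{42} and \eqref{43} follow the same scheme with the interval closure in place of the non-crossing closure. For \eqref{42}, write $m_n=\sum_{\pi\in\mathcal{P}(n)}c_\pi$ and group by $\widehat{\pi}\in\mathcal{I}(n)$. One has $\widehat{\pi}=\sigma$ exactly when $\pi$ restricted to each interval block of $\sigma$ is irreducible. This gives $m_n=\sum_{\sigma\in\mathcal{I}(n)}\prod_{V}\tilde b_{|V|}$ with $\tilde b_k=\sum_{\mathcal{P}_{irr}(k)}c_\rho$, and uniqueness of Boolean cumulants yields $\tilde b=b$. For \eqref{43}, use $m_n=\sum_{\pi\in\mathcal{NC}(n)}r_\pi$ and group non-crossing partitions by their interval closure. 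The restriction of a non-crossing partition to a block of its interval closure is again non-crossing and irreducible, which gives $b_n=\sum_{\mathcal{NC}_{irr}(n)}r_\pi$.

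The main obstacle is the bijective claim that closures decompose blockwise. Precisely, the map $\pi\mapsto(\pi|_V)_{V\in\sigma}$ must be a bijection from $\{\pi:\overline{\pi}=\sigma\}$ onto $\prod_{V\in\sigma}\mathcal{P}_{conn}(V)$, and similarly for interval closures.

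For the non-crossing case, I would first check that $\overline{\pi}$ can be obtained by iteratively merging crossing blocks. This means the blocks of $\overline{\pi}$ are the connected components of the crossing graph of $\pi$, whose vertices are the blocks of $\pi$, with an edge whenever two blocks cross. So $\pi|_V$ has a connected crossing graph, i.e.\ it is connected. Conversely, suppose we are given connected partitions on the blocks of a non-crossing $\sigma$. Their union has non-crossing closure $\le\sigma$, because $\sigma$ is non-crossing and $\ge\pi$. It also has closure $\ge\sigma$, because each piece is connected and so forces its whole block of $\sigma$ to merge.

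The interval case is analogous and easier. The blocks of $\widehat{\pi}$ are the minimal intervals that are unions of blocks of $\pi$. Finally, the univariate statement only needs the multiplicative extension of $c_\pi,r_\pi$ over blocks, which holds by definition.
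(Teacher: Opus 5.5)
The paper gives no proof of this theorem. It is quoted from Lehner, and the multivariate versions are attributed to Belinschi--Nica and to Arizmendi--Hasebe--Lehner--Vargas. So your proposal is a genuinely self-contained route rather than a variant of one in the paper. Your argument groups partitions by their non-crossing or interval closure, factors the inner sum over the blocks of the closure, and then uses uniqueness of free/Boolean cumulants. This is the standard closure-decomposition argument, and it is correct.

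Compared with the bare citation, it buys two things. It carries over verbatim to the multivariate setting, because $c_\pi[a_1,\ldots,a_n]=\prod_{V\in\sigma}c_{\pi|_V}[(a_i)_{i\in V}]$ holds by definition whenever $\pi\le\sigma$. Also, your bijection $\{\pi:\overline{\pi}=\sigma\}\cong\prod_{V\in\sigma}\mathcal{P}_{conn}(V)$ is exactly what justifies the identity $r^h_\pi=\sum_{\overline{\sigma}=\pi}c^h_\sigma$. The paper uses that identity without comment in the proof of Theorem~\ref{main-thm-cumulantesclasicos}.

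There are two places to tighten.

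First, the inference from ``$\overline{\pi}$ is obtained by iteratively merging crossing blocks'' to ``the blocks of $\overline{\pi}$ are the connected components of the crossing graph of $\pi$'' is not automatic. A priori, $B_1\cup B_2$ could cross a block $B_3$ that crosses neither $B_1$ nor $B_2$, and ruling this out is the real content. It is true, for example by viewing blocks as convex hulls of points on a circle. You do not actually need it; minimality suffices in both directions:
\begin{itemize}
\item[(a)] Suppose $\overline{\pi}=\sigma$ but $\overline{\pi|_V}<1_V$ for some $V\in\sigma$. Replace $V$ by the blocks of $\overline{\pi|_V}$. The result is non-crossing, since a sub-block of $V$ crossing some $W\in\sigma$ would make $V$ cross $W$. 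It lies above $\pi$ and strictly below $\sigma$, contradicting minimality.
\item[(b)] Conversely, let $\tau\ge\pi$ be non-crossing. Then $\tau|_V$ is non-crossing and $\tau|_V\ge\pi|_V$, so $\tau|_V=1_V$ whenever $\overline{\pi|_V}=1_V$. Hence $\tau\ge\sigma$.
\end{itemize}
The same argument works for interval closures.

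Second, in \eqref{43} the inverse of the restriction map needs one more fact. A union of non-crossing partitions of the interval blocks of $\sigma$ must itself be non-crossing. This is immediate, since points of distinct intervals cannot interleave, but it should be stated.
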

The previous theorem is stated in the univariate case. However, all three formulas remain valid in the multivariate setting. For the multivariate versions of \eqref{42} and \eqref{43}, see \cite{arizmendi2015relations}. The multivariate version of \eqref{41} was established by Belinschi and Nica in \cite{belinschi2008eta}.

\section{Markov-Krein correspondence}

We now recall the definition of the Markov--Krein correspondence, following Kerov’s approach via interlacing sequences and interlacing measures \cite{interlacingmeasures}.

\begin{definition}\label{rayleigh measure}
    Let $\tau$ be a bounded measure on $\mathbb R$, not necessarily positive. We say that $\tau$ is a \textbf{Rayleigh measure} if the following three conditions are satisfied: \(\displaystyle 0 \leq \tau\bigl((-\infty,u)\bigr) \leq 1\), for every \(u \in \mathbb{R}\); \(\displaystyle \tau\bigl((-\infty,\infty)\bigr) = 1;\) and \(\displaystyle \int_{-\infty}^{\infty} \ln(1 + |u|)\,\tau(du) < \infty.\)
We will denote $\mathcal F(\mathbb R)$ the space of Rayleigh measures on $\mathbb R$.
\end{definition}
\begin{remark}
    Using the previous definition, we can make the following observations: Two positive measures interlace\footnote{For the definition of interlacing measures, see \cite{interlacingmeasures}.} if and only if their difference is a Rayleigh measure; For every Rayleigh measure $\tau \in \mathcal F(\mathbb R)$, its Jordan decomposition $\tau = \tau_+ - \tau_-$ implies that $\tau_+$ and $\tau_-$ are two interlacing measures.
\end{remark}
\begin{definition}
    Let $F:\mathbb R\to \mathbb R $ be a measurable function. We say that $F$ is a \textbf{Rayleigh function} if $0\leq F(u)\leq 1$ for every $u\in\mathbb{R}$, $\displaystyle \int_{-\infty}^0\frac{F(u)du}{1+|u|}<\infty $ and $\displaystyle \int_{0}^\infty\frac{1-F(u)}{1+u}du<\infty$. 
\end{definition}
\begin{theorem}[Theorem 2.2.3 \cite{interlacingmeasures}]
    For every Rayleigh function $F$, there exists a unique probability measure $\mu$ such that
    \begin{equation}
    \label{I.maestra}
        \int_{-\infty}^{\infty}\frac{\mu(du)}{z-u}=\frac{1}{z}\exp \left(-\int_{-\infty}^0\frac{F(u)}{z-u}du+\int_{0}^\infty\frac{1-F(u)}{z-u}du\right),
    \end{equation}
    for all $z\in\mathbb{C}\backslash\mathbb{R}$.
\end{theorem}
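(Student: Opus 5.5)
The plan is to recognize the right-hand side of \eqref{I.maestra} as the Cauchy transform of a probability measure, via the Nevanlinna (Pick) representation. The left factor and the exponent together should give a function in the upper half-plane whose argument is controlled by $F$. Set
\[
\Phi(z)=-\int_{-\infty}^0\frac{F(u)}{z-u}\,du+\int_0^\infty\frac{1-F(u)}{z-u}\,du .
\]
The integrability hypotheses on $F$ make both integrals converge absolutely and locally uniformly on $\mathbb{C}\setminus\mathbb{R}$, because $|z-u|^{-1}\le C_z(1+|u|)^{-1}$. Hence $\Phi$ is holomorphic there.

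Next, write
\[
\log z=\int_{-\infty}^{0}\Big(\frac{1}{u-z}\cdots\Big)
\]
in the standard form $\log z=\int_{-\infty}^0\big(\frac{1}{u-z}-\frac{1}{u-1}\big)du$ (principal branch, suitably normalized). This lets me express $\log\big(\tfrac1z e^{\Phi(z)}\big)=-\log z+\Phi(z)$ as
\[
\int_{\mathbb{R}}\Big(\frac{1}{u-z}-\frac{u}{1+u^2}\Big)\,\big(\mathbf 1_{u<0}-F(u)\big)\,du\;+\;\text{const}.
\]
This combination is made precise by using the convergent form of each piece, and the real constant is fixed by matching normalization. Taking imaginary parts, for $\Im z>0$,
\[
\Im\big(-\log z+\Phi(z)\big)=-\int_{\mathbb{R}}\frac{\Im z}{|u-z|^2}\,\theta(u)\,du,\qquad \theta(u):=1-\mathbf 1_{u<0}+F(u)-\ldots
\]
More precisely, the density against the Poisson kernel is $g(u)$, where $g=F$ on $u<0$ and $g=F$ on $u>0$ after the $-\log z$ term contributes $\mathbf 1_{u<0}$ ($\arg z=\int_{-\infty}^0 P_z(u)\,du$). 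Since $0\le F\le1$, the harmonic conjugate $\arg G(z)=-\pi\,(P*F)(z)$ lies in $[-\pi,0]$. So $G(z):=\tfrac1z e^{\Phi(z)}$ satisfies $-\pi\le\arg G\le0$, i.e. $\Im G\le0$ on the upper half-plane, and $G(\bar z)=\overline{G(z)}$.

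Then I establish the asymptotics $zG(z)\to1$ as $z=iy$, $y\to\infty$. This amounts to $\Phi(iy)\to0$, which follows by dominated convergence: $\frac{|u|\,}{|iy-u|}\cdot\frac{1}{1+|u|}$-type bounds hold, and the integrands $\frac{F(u)}{iy-u}$ tend to $0$ pointwise and are dominated by $\frac{F(u)}{1+|u|}$ for $y\ge1$ (similarly for $1-F$).

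Now apply the Nevanlinna representation to $-G$, which maps the upper half-plane into its closure. Together with $iyG(iy)\to1$, the standard characterization gives a unique probability measure $\mu$ with $G=G_\mu$: the linear term vanishes and the representing measure has total mass $\lim iyG(iy)=1$. Uniqueness follows from Stieltjes inversion, since $\mu$ is recovered from $-\frac1\pi\Im G(x+i\epsilon)$.

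The main obstacle I expect is the second step. There, the branch of $\log$ and the conditionally convergent pieces must be handled carefully, so that $\Im(-\log z+\Phi)$ is genuinely the Poisson integral of $-\pi F$. Without this, the argument bound $-\pi\le\arg G\le0$, and hence $\Im G\le0$, is not justified. Computing $\Im\Phi$ directly is cleaner: $\Im\frac{1}{z-u}=-\frac{\Im z}{|z-u|^2}$. This gives $\Im\Phi=\int_{-\infty}^0 P F-\int_0^\infty P(1-F)$ (up to factor $\pi$), and adding $-\arg z=-\pi\int_{-\infty}^0 P$ yields $-\pi\int_{\mathbb{R}}P\,F\in[-\pi,0]$ after regrouping. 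So I would take this direct route.
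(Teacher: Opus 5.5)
The paper gives no proof of this statement; it is imported from Kerov. Your argument is correct and is the standard one. You bound $\arg G$ for $G(z)=\frac{1}{z}e^{\Phi(z)}$ by a Poisson integral, get $iyG(iy)=e^{\Phi(iy)}\to 1$ by dominated convergence, and apply the Nevanlinna characterization of Cauchy transforms of probability measures. This is the Krein/Aronszajn--Donoghue exponential representation read in one direction, which is the standard mechanism behind results of this type.

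One correction in the final regrouping. With $P_z(u)=\frac{1}{\pi}\frac{\Im z}{|z-u|^2}$ you have $\Im\Phi(z)=\pi\int_{-\infty}^0 P_zF-\pi\int_0^\infty P_z(1-F)$ and $\arg z=\pi\int_{-\infty}^0 P_z$, so
\[
\Im\bigl(-\log z+\Phi(z)\bigr)=-\pi\int_{\mathbb{R}}P_z(u)\bigl(1-F(u)\bigr)\,du ,
\]
not $-\pi\int_{\mathbb{R}}P_zF$. You can check this with $F=\mathbf{1}_{[0,\infty)}$, where $G(z)=1/z$.

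Since $0\le 1-F\le 1$, the conclusion $-\pi\le\arg G\le 0$ is unaffected. In fact the inequalities are strict: the integrability conditions exclude $F=0$ a.e. and $F=1$ a.e. So $G$ maps into the open lower half-plane, which is what most statements of the characterization require.

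Drop the earlier paragraph with $\theta$ and $g$. It is garbled and is superseded by your direct computation of $\Im\Phi$.
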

If a Rayleigh function $F$ has bounded variation, then it can be regarded as the distribution function of some Rayleigh measure $\tau$ with bounded variation on $\mathbb{R}$. In this case, using integration by parts we get
\begin{equation*}
        \frac{1}{z}\exp\left(-\int_{-\infty}^0\frac{F(u)}{z-u}du+\int_{0}^\infty\frac{1-F(u)}{z-u}du\right)=\exp\left(-\int_{-\infty}^\infty\ln(z-u)\tau(du)\right).
    \end{equation*}
leading to the following definition:
\begin{definition}
Let $\mu$ be a probability measure on $\mathbb{R}$ and let $\tau$ be a Rayleigh measure with bounded variation on $\mathbb{R}$. The \textbf{Markov--Krein correspondence} is defined by the identity
\begin{equation}\label{2}
    \int_{-\infty}^{\infty}\frac{\mu(du)}{z-u}
    =
    \exp\left(
        -\int_{-\infty}^{\infty}\ln(z-u)\,\tau(du)
    \right),
    \qquad \Im(z)>0.
\end{equation}
\end{definition}
Let $\mathcal{M}(\mathbb{R})$ denote the space of probability measures on $\mathbb{R}$, and let $\mathcal{F}(\mathbb{R})$ denote the space of Rayleigh measures with bounded variation on $\mathbb{R}$. Then the map \(
\text{MK}:\mathcal{F}(\mathbb{R})\longrightarrow \mathcal{M}(\mathbb{R})\), \(\tau\longmapsto \mu
\), defined through \eqref{2}, establishes a homeomorphism between $\mathcal{F}(\mathbb{R})$ and $\mathcal{M}(\mathbb{R})$, when both spaces are endowed with their corresponding weak topologies.
Throughout the remainder of this paper, $\tau$ will denote a Rayleigh measure with bounded variation on $\mathcal{F}(\mathbb{R})$, while $\mu$ will denote a probability measure in $\mathcal{M}(\mathbb{R})$. We write $\text{MK}(\tau)$ for the \textbf{Markov--Krein transform} of $\tau$, and $\text{MK}^{(-1)}(\mu)$ for the \textbf{inverse Markov--Krein transform} of $\mu$. Following Kerov's convention in \cite{interlacingmeasures}, and in order to simplify the notation, we will omit the explicit use of $\text{MK}(\tau)$ and $\text{MK}^{(-1)}(\mu)$ whenever no confusion can arise. Thus, we will often write simply $\mu$ in place of $\text{MK}(\tau)$ and $\tau$ in place of $\text{MK}^{(-1)}(\mu)$.
 
Note that the Cauchy transform of $\tau$ admits the representation
\[
    \int_{-\infty}^{\infty}\frac{\tau(du)}{z-u}
    =
    \frac{d}{dz}\int_{-\infty}^{\infty}\ln(z-u)\,\tau(du).
\]
Therefore, identity \eqref{2} may be rewritten as
\[
    -\frac{d}{dz}\log\left(\int_{-\infty}^{\infty}\frac{\mu(du)}{z-u}\right)
    =
    \int_{-\infty}^{\infty}\frac{\tau(du)}{z-u}.
\]
Equivalently, in terms of the Cauchy transforms $G_\mu$ and $G_\tau$, the Markov--Krein correspondence is characterized by the differential equation
\begin{equation}\label{3}
    G_\mu'(z)+G_\mu(z)G_\tau(z)=0.
\end{equation}

\begin{example}\label{ejemplo-1}
    Consider $\mu$ to be the arcsine distribution supported on $(-2,2)$, that is, a probability measure with density
\(   \mu(dt)=\frac{1}{\pi\sqrt{4-t^2}}\mathds{1}_{\{-2\leq t\leq 2\}}\,dt.
\)
Its Cauchy transform and its derivative are given by
\[
    G_\mu(z)=\frac{1}{\sqrt{z^2-4}},
    \qquad
    G_\mu'(z)=-\frac{z}{(z^2-4)\sqrt{z^2-4}}.
\]
Therefore,
\[
    G_\tau(z)
    =
    -\frac{G_\mu'(z)}{G_\mu(z)}
    =
    \frac{z}{z^2-4},
\]
which coincides with the Cauchy transform of a symmetric Bernoulli distribution supported on $\{-2,2\}$.

\end{example}

\begin{example}\label{ejemplo-2}
    Consider $\mu$ to be the semicircular distribution, that is, a probability measure with density
\(
    \mu(dt)=\frac{\sqrt{4-t^2}}{2\pi}\mathds{1}_{\{-2\leq t\leq 2\}}\,dt.
\)
Its Cauchy transform and its derivative are given by
\[
    G_\mu(z)=\frac{z-\sqrt{z^2-4}}{2},
    \qquad
    G_\mu'(z)=\frac{1}{2}\left(1-\frac{z}{\sqrt{z^2-4}}\right).
\]
Therefore,
\[
    G_\tau(z)
    =
    -\frac{G_\mu'(z)}{G_\mu(z)}
    =
    \frac{1}{\sqrt{z^2-4}}\mathds{1}_{\{-2\leq t\leq 2\}}.
\]
This coincides with the Cauchy transform of the arcsine distribution supported on $(-2,2)$.
\end{example}

\begin{example}\label{example-3}
    Let $\mu=\frac12\delta_{-2}+\frac12\delta_2$ be the symmetric Bernoulli distribution supported on $\{-2,2\}$. Its Cauchy transform and derivative are
    \begin{equation*}
        G_\mu(z)=\frac{z}{z^2-4},
        \qquad
        G_\mu'(z)=-\frac{z^2+4}{(z^2-4)^2}.
    \end{equation*}
    Therefore,
    \begin{equation*}
        G_\tau(z)
        =
        -\frac{G_\mu'(z)}{G_\mu(z)}
        =
        \frac{z^2+4}{z(z^2-4)}
        =
        \frac{2z}{z^2-4}-\frac{1}{z}
        =
        2G_\mu(z)-G_{\delta_0}(z).
    \end{equation*}
    Hence, \(\tau=2\mu-\delta_0\).
\end{example}

We will regard identity \eqref{3} as the main tool for identifying the Markov--Krein transform \(\mu\) of a signed measure \(\tau\). The reason for this is that it is expressed in terms of the Cauchy transform, which is one of the most fundamental analytic tools in non-commutative probability. With this idea in mind, if we define

\begin{align*}
    &M_\mu(z):=\frac{1}{z}G_\mu\left(\frac{1}{z}\right),\qquad F_\mu(z):=\frac{1}{G_\mu(z)},\qquad B_\mu(z):=\frac{1}{z}-F_\mu\left(\frac{1}{z}\right).
\end{align*}
After straightforward computations, if $\tau\in \mathcal{F}(\mathbb{R})$ is a Rayleigh measure and $\mu\in \mathcal{M}(\mathbb{R})$ be its Markov--Krein transform. Then the following are equivalent
\begin{equation*}
    \frac{G_\mu^\prime(z)}{G_\mu(z)}=-G_\tau(z),\qquad  \frac{M_\mu^\prime(z)}{M_\mu(z)}=\frac{1}{z}[M_\tau(z)-1],\qquad \frac{F_\mu^\prime(z)}{F_\mu(z)}=\frac{1}{F_\tau(z)},\qquad\frac{1+z^2B_\mu^\prime(z)}{\frac{1}{z}-B_\mu(z)}=\frac{1}{\frac{1}{z}-B_\tau(z)}.
\end{equation*}
In the case where the Markov-Krein transform can be applied iteratively, after carrying out the computations and following the relations
\begin{equation*}
    G_{\text{MK}^{(-1)}(\mu)}(z)=-\frac{G_\mu^\prime(z)}{G_\mu(z)},\qquad G_{\text{MK}^{(-1)}(\text{MK}^{(-1)}(\mu))}(z)=-\frac{G_{\text{MK}^{(-1)}(\mu)}^\prime(z)}{G_{\text{MK}^{(-1)}(\mu)}(z)},
\end{equation*}
then
\begin{equation}\label{cauchy-MK(MK(mu))}
    G_{\text{MK}^{(-1)}(\text{MK}^{(-1)}(\mu))}(z)=\frac{G_\mu^\prime(z)}{G_\mu(z)}-\frac{G_\mu^{\prime\prime}(z)}{G_\mu^\prime(z)}.
\end{equation}
\begin{remark}
It is important to note that the inverse of the Markov--Krein transform is not, in general, closed under iteration. More precisely, if $\mu \in \mathcal{M}(\mathbb{R})$ is a probability measure, then its inverse image $\text{MK}^{(-1)}(\mu)$ is a Rayleigh measure in $\mathcal{F}(\mathbb{R})$. However, in order for the second iterate \(\text{MK}^{(-1)}(\text{MK}^{(-1)}(\mu))\) to be well-defined, one must ensure that $\text{MK}^{(-1)}(\mu)$ is itself a probability measure. This is not always the case, since a Rayleigh measure need not be a probability measure, as seen in the Example \ref{example-3}. Characterizing when this second iteration is well-defined remains, to the best of our knowledge, an unknown.
\end{remark}
\subsection{Moments of the Makov--Krein correspondence}
Let \(\mu\in \mathcal{M}(\mathbb{R})\) be a probability measure on \(\mathbb{R}\), and let \(\tau\in \mathcal{F}(\mathbb{R})\) be a Rayleigh measure on \(\mathbb{R}\) satisfying identity \eqref{2}. We define the \(n\)-th moments of \(\mu\) and \(\tau\) by
\begin{equation*}
    h_n:=\int_{-\infty}^\infty t^n\,\mu(dt),\qquad p_n:=\int_{-\infty}^\infty t^n\,\tau(dt).
\end{equation*}

\begin{theorem}[Theorem 3.2.3 \cite{interlacingmeasures}]
Suppose that a Rayleigh measure \(\tau\in \mathcal{F}(\mathbb{R})\) and a probability measure \(\mu\in\mathcal{M}(\mathbb{R})\) satisfy identity \eqref{2}. Then
\begin{equation}
    \label{8}
    \sum_{n=0}^\infty h_n z^n=\exp\left(\sum_{n=1}^\infty \frac{p_n}{n}z^n\right).
\end{equation}
\end{theorem}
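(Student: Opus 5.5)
The plan is to expand both sides of identity \eqref{2} as formal power series in $1/z$ and then match coefficients. We may work formally, or assume for the moment that $\tau$ and $\mu$ have compact support, with the general case obtained by a moment/limit argument. Under compact support, for $|z|$ large the Cauchy transform has the expansion
\[
\int_{-\infty}^{\infty}\frac{\mu(du)}{z-u}=\sum_{n=0}^\infty \frac{h_n}{z^{n+1}} .
\]
On the right-hand side, we write $\ln(z-u)=\ln z+\ln(1-u/z)=\ln z-\sum_{n\ge1}\frac{u^n}{n z^n}$, which holds uniformly for $u$ in the support once $|z|$ exceeds the support radius. Integrating against $\tau$ and using the normalization $\tau(\mathbb{R})=1$ from Definition \ref{rayleigh measure}, we get
\[
-\int_{-\infty}^{\infty}\ln(z-u)\,\tau(du)=-\ln z+\sum_{n\ge1}\frac{p_n}{n}z^{-n}.
\]
Exponentiating turns the $-\ln z$ term into the factor $1/z$. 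Both sides of \eqref{2} therefore carry the same factor $1/z$, and cancelling it gives
\[
\sum_{n\ge0}h_n w^n=\exp\Big(\sum_{n\ge1}\frac{p_n}{n}w^n\Big), \qquad w=1/z,
\]
first for small $|w|$ with $w$ in the image of the upper half plane. The identity then extends by analyticity, or as an identity of formal power series, since the coefficients are determined by any set with an accumulation point.

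An alternative route I would mention as a check goes through \eqref{3}. There we set $G_\mu(z)=\sum h_n z^{-n-1}$ and $G_\tau(z)=\sum p_n z^{-n-1}$, so that $M_\mu(w)=\sum h_n w^n$. The relation $M_\mu'/M_\mu=\frac1w(M_\tau-1)=\sum_{n\ge1}p_n w^{n-1}$, already listed among the equivalent forms above, integrates directly to \eqref{8} using $M_\mu(0)=h_0=1$. This route avoids choosing branches of the logarithm.

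The main obstacle is justification when the supports are unbounded, where the moments may not determine analytic expansions. I would handle this by restricting to measures with all moments finite and reading \eqref{8} as an equality of asymptotic expansions as $z\to\infty$ nontangentially in the upper half plane, say along $z=iy$ with $y\to\infty$. Concretely, the truncated expansions $G_\mu(iy)=\sum_{n\le N}h_n(iy)^{-n-1}+o(y^{-N-1})$ and the analogous expansion for $\int\ln(z-u)\tau(du)$ follow from dominated convergence, using the finiteness of the moments of $|\tau|$. Composing these finite asymptotic expansions with $\exp$ then yields the coefficient identity term by term. The branch of $\ln$ is the principal one on $\Im z>0$, which is consistent with $\ln z-\sum u^n/(nz^n)$ for large $|z|$.
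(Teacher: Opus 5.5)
Your argument is correct. The paper does not prove this statement. It imports it from Kerov \cite{interlacingmeasures} and uses it as a black box. So the only in-paper material to compare with is:
the derivation \eqref{2} $\Rightarrow$ \eqref{3};
the listed equivalent form $M_\mu'/M_\mu=\frac{1}{z}(M_\tau-1)$;
the later remark that \eqref{3} and \eqref{8} are equivalent when $\tau$ and $\mu$ are moment-determinate.

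Your second route is exactly that chain, read as an identity of formal power series. It is the cleanest argument when the supports are compact. For unbounded supports it still needs analytic input like that of your last paragraph, since an asymptotic expansion cannot in general be differentiated termwise. Here, though, $G_\mu'(z)=-\int(z-u)^{-2}\mu(du)$ can be expanded directly along $z=iy$.

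Your main route, expanding \eqref{2} itself, gives more than the paper's framing suggests. The direction \eqref{2} $\Rightarrow$ \eqref{8} needs no determinacy at all. It needs only finiteness of the moments of $\mu$ and of $|\tau|$, which the statement presupposes anyway in defining $h_n$ and $p_n$. The same argument also gives the identity up to order $N$ when only moments up to order $N$ exist.

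One step is worth writing out explicitly: the branch and remainder control on the imaginary axis. Your closing remark about consistency ``for large $|z|$'' only covers $|u|<|z|$, but for unbounded support you need the splitting for every real $u$. Since $iy-u=iy\,(1+iu/y)$ and $\operatorname{Re}(1+iu/y)=1>0$, the principal logarithms add exactly: $\ln(iy-u)=\ln(iy)+\ln(1+iu/y)$ for all $u\in\mathbb{R}$.

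For real $t$ the Taylor remainder of $\ln(1+it)$ is $\int_0^{it}\frac{(-s)^N}{1+s}\,ds$, and it satisfies $\bigl|\ln(1+it)-\sum_{k=1}^N\frac{(-1)^{k+1}(it)^k}{k}\bigr|\le |t|^N\min(1,|t|)$. With $t=u/y$, the remainder multiplied by $y^N$ is bounded by $|u|^N\min(1,|u|/y)$. This is exactly the domination by $|u|^N$, integrable against $|\tau|$, that your dominated convergence step needs, and it tends to $0$ pointwise. The same bound, $\bigl|u^{N+1}/(iy-u)\bigr|\le|u|^N\min(1,|u|/y)$, handles the remainder in the expansion of $G_\mu(iy)$.
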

For a sequence \(\{m_n\}_{n=0}^{\infty}\), the moment problem asks whether there exists a measure \(\rho\) such that
\begin{equation*}
    m_n = \int_\mathbb{R} x^n \, \rho(dx), \qquad \text{for each } n = 0, 1, 2, \ldots
\end{equation*}
Solving the moment problem consists in determining the existence, uniqueness, and construction of such a measure \(\rho\). This problem has significant applications in probability theory, statistics, functional analysis, and other areas of mathematics \cite{shohat1943problem}. In this sense, the moment problem for the Rayleigh measure \(\tau\in\mathcal{F}(\mathbb{R})\) is determinate if and only if the moment problem for its Markov--Krein transform \(\mu\in\mathcal{M}(\mathbb R)\) is determinate. Moreover, if the measures \(\tau\) and \(\mu\) are uniquely determined by their moments, then identity \eqref{3} is equivalent to identity \eqref{8}.

\begin{theorem}[\cite{interlacingmeasures}]\label{thm-momentos de h y p}
If \(p_n\) and \(h_n\) are the moments of a Rayleigh measure \(\tau\) and of its Markov--Krein transform \(\mu\), respectively. Then the following are equivalent:
\begin{itemize}
    \item[(1)] \(\displaystyle nh_n=\sum_{k=1}^n h_{n-k}p_k\);
    \item[(2)] \(\displaystyle h_n=\sum_{\rho\vdash n}\frac{p_1^{r_1}p_2^{r_2}\cdots p_n^{r_n}}{1^{r_1}r_1!2^{r_2}r_2!\cdots n^{r_n}r_n!}\), where the sum runs over all integer partitions \(\rho=(1^{r_1},2^{r_2},\ldots,n^{r_n})\) such that \(n=r_1+2r_2+\cdots+nr_n\) (Girard--Newton identity);
    \item[(3)] \(\displaystyle h_n=\frac{1}{n!}\sum_{\pi\in S_n}\prod_{k\geq 1}p_k^{r_k(\pi)}\), where \(S_n\) denotes the symmetric group.
\end{itemize}
\end{theorem}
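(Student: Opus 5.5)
We prove the equivalences by taking the generating-function identity \eqref{8} as the common source. Each of (1), (2) and (3) is a coefficientwise consequence of it, and conversely each determines the sequence $(h_n)$ recursively from $(p_n)$ with $h_0=1$. Concretely, write $H(z)=\sum_{n\ge0}h_nz^n$ and $P(z)=\sum_{n\ge1}\frac{p_n}{n}z^n$, working in the ring of formal power series. Each of the three statements forces $h_0=1$ and expresses $h_n$ in terms of $p_1,\dots,p_n$ and $h_0,\dots,h_{n-1}$. So it suffices to show that each one is equivalent to $H=\exp(P)$ as formal power series. Then they are equivalent to each other, and all hold under the hypothesis of the theorem by \eqref{8}.

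\textbf{(1) $\Leftrightarrow$ $H=\exp P$.} Since $H(0)=1$, the identity $H=\exp P$ is equivalent to the logarithmic-derivative identity $zH'(z)=H(z)\,zP'(z)$. Here $zP'(z)=\sum_{k\ge1}p_kz^k$. Comparing coefficients of $z^n$ gives exactly $nh_n=\sum_{k=1}^n h_{n-k}p_k$. Conversely, (1) with $h_0=1$ gives the differential equation $zH'=HzP'$ with $H(0)=1$, whose unique formal solution is $\exp P$. The uniqueness comes from the recursion itself, since $h_n$ is determined by the lower coefficients.

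\textbf{(2) $\Leftrightarrow$ $H=\exp P$.} Factor the exponential as $\exp P=\prod_{k\ge1}\exp(p_kz^k/k)=\prod_{k\ge1}\sum_{r_k\ge0}\frac{p_k^{r_k}z^{kr_k}}{k^{r_k}r_k!}$. Extracting the coefficient of $z^n$ selects the tuples $(r_1,\dots,r_n)$ with $\sum kr_k=n$, that is, the integer partitions $\rho\vdash n$, each weighted by $\prod_k p_k^{r_k}/(k^{r_k}r_k!)$. This is the Girard--Newton formula in (2).

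\textbf{(3) $\Leftrightarrow$ (2).} For $\pi\in S_n$, let $r_k(\pi)$ be the number of $k$-cycles of $\pi$. Group the permutations by cycle type. The number of permutations of type $(1^{r_1},\dots,n^{r_n})$ is the classical count $n!/\prod_k k^{r_k}r_k!$, since the centralizer of such a permutation has order $\prod_k k^{r_k}r_k!$. Dividing by $n!$ then turns the sum in (3) into the sum in (2) term by term.

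The only step needing care is the converse direction of the first equivalence, namely that each formula pins down $H$ uniquely. This is immediate from the triangular recursive structure, so no real obstacle arises; the whole argument reduces to elementary formal power series manipulations together with the cycle-type count.
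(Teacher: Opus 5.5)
Your proof is correct. The paper gives no proof of its own: it quotes the result from Kerov immediately after the generating-function identity \eqref{8}, and your coefficientwise reading of \eqref{8} is the standard derivation. Your three steps (logarithmic derivative for (1), factoring $\exp P=\prod_k \exp(p_kz^k/k)$ for (2), and the cycle-type count $n!/\prod_k k^{r_k}r_k!$ for (3)) are the classical Newton and Girard--Newton identities between complete homogeneous symmetric functions $h_n$ and power sums $p_n$.

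One small correction: your claim that each of the three statements forces $h_0=1$ is false for (1). At $n=0$ the identity (1) is vacuous, and for arbitrary $h_0$ it only gives $H=h_0\exp P$. You should therefore take $h_0=\mu(\mathbb{R})=1$ from the hypothesis that $\mu$ is a probability measure.
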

A natural question that arises in this context is whether the formulas presented in the previous relations can be inverted, that is, whether one can find a relation expressing \(p_n\) in terms of the moments \(h_n\). A formula presented in \cite{interlacingmeasures} is the following:
\begin{equation*}
    p_n=(-1)^{n-1}\begin{vmatrix}
        h_1& 1& 0&\cdots &0\\
        2h_2&h_1&1&\cdots &0\\ 
        \vdots &\vdots&\vdots &&\vdots \\
        nh_{n}&h_{n-1}&h_{n-2}&\cdots &h_1
    \end{vmatrix}.
\end{equation*}
From relation \eqref{8}, one can also derive a formula expressing \(p_n\) in terms of \(h_n\) using partitions, as stated in the following proposition. In Section~\ref{moment-moment-section}, Theorem~\ref{moment-moment}, we present an alternative formula in terms of cyclic interval partitions, which is of course equivalent. We emphasize, however, that the latter formulation admits a multivariate generalization.
\begin{proposition}\label{p_n-h_n-particiones}
If \(p_n\) and \(h_n\) are the moments of a Rayleigh measure \(\tau\) and of its Markov--Krein transform \(\mu\), respectively. Then 
    \begin{equation*}
        p_n=\sum_{\pi\in\mathcal{P}(n)}h_\pi(-1)^{\#(\pi)-1}\frac{\#(\pi)!(\#(\pi)-1)!}{(n-1)!}.
    \end{equation*}
\end{proposition}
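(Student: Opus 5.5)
The plan is to take logarithms in the generating-function identity \eqref{8} of Theorem 3.2.3, extract the coefficient of $z^n$, and then turn the resulting sum over compositions of $n$ into a sum over set partitions $\pi\in\mathcal P(n)$ grouped by their number of blocks. Since $h_0=1$, write $H(z)=\sum_{n\ge0}h_nz^n=1+u(z)$ with $u(z)=\sum_{m\ge1}h_mz^m$. Then \eqref{8} gives
\[
\sum_{n\ge1}\frac{p_n}{n}z^n=\log(1+u(z))=\sum_{k\ge1}\frac{(-1)^{k-1}}{k}u(z)^k .
\]
Comparing coefficients of $z^n$ yields
\[
p_n=n\sum_{k=1}^n\frac{(-1)^{k-1}}{k}\sum_{\substack{(i_1,\dots,i_k)\\ i_1+\cdots+i_k=n,\ i_j\ge1}}h_{i_1}\cdots h_{i_k}.
\]
This is a finite identity among moments, so no convergence issues arise if we work with formal power series.

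Next I pass from compositions to set partitions. For a fixed integer partition $\lambda=(1^{m_1},2^{m_2},\dots)$ of $n$ with $k=\sum_j m_j$ parts, two counts are needed. There are $k!/\prod_j m_j!$ compositions rearranging $\lambda$. There are $n!/\bigl(\prod_j (j!)^{m_j}\prod_j m_j!\bigr)$ set partitions $\pi\in\mathcal P(n)$ of type $\lambda$. All such $\pi$ have the same value $h_\pi=\prod_j h_j^{m_j}$ (in the univariate case $h_\pi$ depends only on block sizes). Substituting, the factor $\prod_j m_j!$ cancels and each $\pi$ with $k$ blocks of sizes $|V_1|,\dots,|V_k|$ receives the weight
\[
\frac{n\,(-1)^{k-1}(k-1)!\,\prod_{V\in\pi}|V|!}{n!}.
\]
This gives $p_n=\sum_{\pi\in\mathcal P(n)}(-1)^{\#(\pi)-1}h_\pi\,w(\pi)$, with $w(\pi)$ the weight above.

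The last and hardest step is to show that this weight, summed over the partitions with a given $h_\pi$, agrees with the normalization $\#(\pi)!(\#(\pi)-1)!/(n-1)!$ of the statement. Since both sides are sums over $\mathcal P(n)$, it suffices to compare total coefficients of each monomial $\prod_j h_j^{m_j}$. I would first test this against the recursion (1) of Theorem~\ref{thm-momentos de h y p} in small cases, $n=2$ and $n=3$. I expect this comparison to be the main obstacle. For $n=2$, the derivation above gives $p_2=2h_2-h_1^2$. Reading the statement's weight literally, the partition $\{\{1\},\{2\}\}$ would receive coefficient $-2$ rather than $-1$. So this reconciliation step must fix exactly how the normalization in the statement, and the meaning of $h_\pi$, is to be interpreted before the monomial-by-monomial comparison can be completed. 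If, after fixing the conventions, the two weights do not agree on every monomial, then the formula actually proved by this route is the one with weight $w(\pi)$ above.
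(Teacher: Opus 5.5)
Your computation is correct, but the step you call ``the main obstacle'' cannot be fixed by choosing conventions. Your $n=2$ check is already a counterexample to the proposition as printed.

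In the univariate setting $h_\pi=\prod_{V\in\pi}h_{|V|}$ unambiguously; this is how the paper defines $h_\pi$. The printed weight $\#(\pi)!(\#(\pi)-1)!/(n-1)!$ puts coefficient $1$ on the one-block partition and $-2$ on $\{\{1\},\{2\}\}$, so it gives $p_2=h_2-2h_1^2$. But \eqref{8} forces $p_2=2h_2-h_1^2$. Two concrete failures:
\begin{itemize}
\item $\mu=\tau=\delta_1$ satisfies \eqref{2}, since $\frac{1}{z-1}=\exp(-\ln(z-1))$, and has $h_n=p_n=1$. The printed formula returns $p_2=-1$.
\item For the symmetric Bernoulli measure of Example~\ref{example-3}, the printed formula returns $p_2=4$, while $\tau=2\mu-\delta_0$ has $p_2=8$.
\end{itemize}
So do not leave your conclusion conditional. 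The proposition is false as stated. What your argument proves is the corrected identity, with $\#(\pi)!$ replaced by $\prod_{V\in\pi}|V|!$:
\[
p_n=\sum_{\pi\in\mathcal{P}(n)}(-1)^{\#(\pi)-1}\,\frac{(\#(\pi)-1)!\,\prod_{V\in\pi}|V|!}{(n-1)!}\,h_\pi .
\]
One small slip: your displayed $w(\pi)$ already contains $(-1)^{k-1}$, so it should not be multiplied by $(-1)^{\#(\pi)-1}$ a second time.

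The corrected formula reproduces the paper's later list $p_2=2h_2-h_1^2$, $p_3=3h_3-3h_2h_1+h_1^3$, $p_4=4h_4-4h_3h_1-2h_2^2+4h_2h_1^2-h_1^4$, so it agrees with \eqref{momentos-momentos-univariados}. The printed version does not, so the later corollary asserting that the two formulas are equivalent needs the same correction.

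The paper's proof reaches the same point by a slicker route:
\begin{itemize}
\item It sets $\widetilde h_n=n!\,h_n$ and $\widetilde p_n=(n-1)!\,p_n$.
\item It reads \eqref{8} as the exponential formula $\widetilde h_n=\sum_{\pi\in\mathcal{P}(n)}\widetilde p_\pi$.
\item It applies M\"obius inversion on $\mathcal{P}(n)$ to get $\widetilde p_n=\sum_{\pi}(-1)^{\#(\pi)-1}(\#(\pi)-1)!\,\widetilde h_\pi$.
\end{itemize}
Here the exponential formula does the composition-versus-set-partition bookkeeping that you carry out by hand. Every step is correct until the last substitution, where $\widetilde h_\pi$ is replaced by $\#(\pi)!\,h_\pi$ instead of $h_\pi\prod_{V\in\pi}|V|!$. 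That is exactly where the spurious $\#(\pi)!$ comes from. Your direct expansion of $\log(1+u)$ is more elementary and keeps the factor $\prod_{V\in\pi}|V|!$ visible, which is what exposes the error.
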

\begin{proof}
Since $\mu$ is the Markov--Krein transform of $\tau$, we have
\[
    \sum_{n=0}^\infty h_n z^n
    =
    \exp\left(\sum_{n=1}^\infty \frac{p_n}{n}z^n\right),
\]
which is equivalent to
\[
    \log\left(\sum_{n=0}^\infty \frac{h_n n!}{n!} z^n\right)
    =
    \sum_{n=1}^\infty \frac{p_n (n-1)!}{n!}z^n.
\]
If we define $\widetilde{h}_n := h_n n!$ and $\widetilde{p}_n := p_n (n-1)!$, then
\[
    \log\left(\sum_{n=0}^\infty \frac{\widetilde{h}_n}{n!} z^n\right)
    =
    \sum_{n=1}^\infty \frac{\widetilde{p}_n}{n!}z^n.
\]
This implies the identity
\[
    \widetilde{h}_n
    =
    \sum_{\pi\in\mathcal{P}(n)} \widetilde{p}_\pi,
\]
and, by Möbius inversion, we obtain
\[
    \widetilde{p}_n
    =
    \sum_{\pi\in\mathcal{P}(n)}
    \widetilde{h}_\pi
    (-1)^{\#(\pi)-1} (\#(\pi)-1)!.
\]
Substituting the definitions of $\widetilde{h}_n$ and $\widetilde{p}_n$, we obtain
\begin{equation}
    p_n
    =
    \sum_{\pi\in\mathcal{P}(n)}
    h_\pi
    (-1)^{\#(\pi)-1}
    \frac{\#(\pi)!(\#(\pi)-1)!}{(n-1)!}.
\end{equation}
\end{proof}
The previous formulas may be used to prove well-known properties of the Markov--Krein correspondence. For instance, if $\mu$ is the Markov--Krein transform of a Rayleigh measure $\tau$, then any translated and rescaled version of $\mu$ corresponds to the same transformation applied to $\tau$. Also,  $\mu$ is supported on a compact interval $[-a,b]$ if and only if the associated Rayleigh measure $\tau$ is also supported on $[-a,b]$. Moreover, when all measures involved have compact support, both the Markov–Krein transform
and its inverse are continuous with respect to weak convergence.

\section{Main Results}

\subsection{Multivariate Markov--Krein correspondence}

Let $\mathcal{H}$ be a separable Hilbert space, and let $\mathcal{B}(\mathcal{H})$ denote the $\ast$-algebra of all bounded linear operators on $\mathcal{H}$ and $\mathcal{A}$ be a $\ast$-subalgebra of $\mathcal{B}(\mathcal{H})$. Given a unit vector $\xi \in \mathcal{H}$, let 
$p:\mathcal{H} \to \mathcal{H}$ be the orthogonal projection onto the one-dimensional subspace generated by $\xi$, and define $q = \mathrm{id}_{\mathcal{H}} - p$ as the orthogonal projection onto the hyperplane $\mathcal{H}_\xi := \{\eta \in \mathcal{H} : \langle \eta, \xi \rangle = 0\}$. Explicitly, for every $\eta \in \mathcal{H}$,
\[
p(\eta) = \langle \eta, \xi \rangle \xi,
\qquad
q(\eta) = \eta - \langle \eta, \xi \rangle \xi.
\]
\begin{definition}\label{mom-mult-h} 
For \(n\ge1\), the functional
\(
h_n:\mathcal A^n\to\mathbb C
\)
defined by
\[
h_n(A_1,\ldots,A_n)
=
\langle A_1\cdots A_n\xi,\xi\rangle,
\]
is called the \(n\)-th \textbf{multivariate moment of the Markov--Krein transform} of $h$.
\end{definition}
\begin{definition}\label{mom-mult-p}
For \(n\ge1\), the functional
\(
p_n:\mathcal A^n\to\mathbb C
\)
defined by
\[
p_n(A_1,\ldots,A_n)
=
\operatorname{Tr}
\big(
A_1\cdots A_n
-
qA_1q\cdots qA_nq
\big)
\]
is called the \(n\)-th \textbf{multivariate moment of the inverse
Markov--Krein transform} of $h$.
\end{definition}
The functional \(h_n\) is well defined, as it corresponds to the vector state associated with \(\xi\). Moreover, \(p_n\) is well defined since the operator \(a_1 \cdots a_n - q a_1 q \cdots q a_n q\) is trace class. These definitions are consistent with, and extend, the univariate case. Indeed, in the particular case where \(a := a_1 = a_2 = \cdots = a_n\), it follows that
\begin{equation*}
    p_n(a,\ldots,a)=\operatorname{Tr}(a^n-qaq\cdots qaq)=\operatorname{Tr}(a^n-(qaq)^n)=p_n,
\end{equation*}
and
\begin{equation*}
    h_n(a,\ldots,a)=\langle a\cdots a\xi,\xi\rangle =\langle a^n\xi,\xi\rangle=h_n.    
\end{equation*} 
where in the univariate case $p_n=\operatorname{Tr}(a^n-(qaq)^n)$ and $h_n=\langle a^n\xi,\xi\rangle$ correspond respectively to the \(n\)-th moment of the Markov--Krein transform and of its inverse, as proved by M. G. Krein \cite{Krein-teoria-operadores}.
\begin{remark}Some important properties; the functional $p_n$ is cyclically invariant as a consequence of traciality of the usual trace, it means, $p_n(A_1,\ldots,A_{n-1},A_n)=p_n(A_2,\ldots,A_n,A_1)$. In contrast, $h_n$ is not necessarily cyclically invariant. For example, if
\[
A=
\begin{bmatrix}
1&1\\
0&0
\end{bmatrix},
\qquad
B=
\begin{bmatrix}
1&0\\
1&0
\end{bmatrix},
\]
then
\(
h_2(A,B)\neq h_2(B,A).
\)
Moreover,
\(
h_n(A_1,\ldots,A_n)
=
h_1(A_1\cdots A_n),
\)
whereas, in general, we have 
\(
p_n(A_1,\ldots,A_n)
\neq
p_1(A_1\cdots A_n),
\)
as can be verified using the previous matrices $A$, $B$ and $\xi=(1,0)$, since \(p_2(B,A)\neq p_1(BA)
\). Finally, if we consider the vector $\xi=(1,0,\ldots,0)$ and matrices $A_1,\ldots ,A_n\in M_{n\times n}(\mathbb{C})$ defined as follows 
\begin{equation*}
    A_k=\begin{bmatrix}
        a_{11}^{(k)}& r_k\\
        c_k& \widetilde{A}_k
    \end{bmatrix},\qquad r_k\in M_{1\times (n-1)}(\mathbb{C}),\quad c_k\in M_{(n-1)\times 1}(\mathbb{C}),\quad \widetilde{A}_k\in M_{(n-1)\times (n-1)}(\mathbb{C}),
\end{equation*}
and consider the products
\begin{equation*}
    A_1\cdots A_n=\begin{bmatrix}
        x_{1,1}&\cdots&x_{1,n}\\
        \vdots & \ddots & \vdots \\
        x_{n,1}&\cdots & x_{n,n}
    \end{bmatrix},\qquad \widetilde{A}_1\cdots \widetilde{A}_n=\begin{bmatrix}
        b_{1,1}&\cdots&b_{1,(n-1)}\\
        \vdots & \ddots & \vdots \\
        b_{(n-1),1}&\cdots & b_{(n-1),(n-1)}
    \end{bmatrix},
\end{equation*}
for some $x_{i,j},b_{k,l}\in\mathbb{C}$. Since $\xi=(1,0,\ldots,0)$
\begin{equation*}
    p=\begin{bmatrix}
        1&0&\cdots &0\\ 
        0 &0 &\cdots &0\\ 
        \vdots &\vdots &&\vdots \\
        0&0&\cdots &0
    \end{bmatrix}\qquad \text{and}\qquad q=\begin{bmatrix}
        0&0&\cdots &0\\ 
        0 &1 &\cdots &0\\ 
        \vdots &\vdots &&\vdots \\
        0&0&\cdots &1
    \end{bmatrix},
\end{equation*}
then
\begin{align*}
    qA_1q\cdots qA_nq=&\begin{bmatrix}
        0&0\\ 
        0&I_{n-1}
    \end{bmatrix}\begin{bmatrix}
        a_{11}^{(k)}& r_k\\
        c_k& \widetilde{A}_k
    \end{bmatrix}\begin{bmatrix}
        0&0\\ 
        0&I_{n-1}
    \end{bmatrix}\cdots \begin{bmatrix}
        0&0\\ 
        0&I_{n-1}
    \end{bmatrix}\begin{bmatrix}
        a_{11}^{(n)}& r_n\\
        c_n& \widetilde{A}_n
    \end{bmatrix}\begin{bmatrix}
        0&0\\ 
        0&I_{n-1}
    \end{bmatrix}\\
    =&\begin{bmatrix}
        0 & \vec{0} \\ 
        \vec{0}^T & \widetilde{A}_1\cdots \widetilde{A}_n
    \end{bmatrix}.
\end{align*}
Hence
\begin{equation*}
   h_n(A_1,\ldots,A_n)=x_{1,1}\qquad \text{and}\qquad  p_n(A_1,\ldots,A_n)=\sum_{i=1}^nx_{i,i}-\sum_{i=1}^{n-1}b_{i,i}.
\end{equation*}
\end{remark}
\begin{proposition}
The functionals $h_n$ and $p_n$ are multilinear.
\end{proposition}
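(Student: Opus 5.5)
We verify multilinearity of each functional separately, fixing all arguments except the $k$-th one. Throughout, we use that composition in $\mathcal{B}(\mathcal{H})$ is bilinear, so for fixed operators $X,Y$ the map $A\mapsto XAY$ is linear.

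For $h_n$, fix $1\le k\le n$ and $A_1,\ldots,A_{k-1},A_{k+1},\ldots,A_n\in\mathcal A$. Set $X=A_1\cdots A_{k-1}$ and $Y=A_{k+1}\cdots A_n$, with empty products equal to $\mathrm{id}_{\mathcal H}$. Then $A\mapsto XAY$ is linear from $\mathcal A$ to $\mathcal B(\mathcal H)$. Applying the linear evaluation $T\mapsto T\xi$ and then $v\mapsto\langle v,\xi\rangle$, which is linear in the first slot under the convention implicit in Definition \ref{mom-mult-h}, gives
\[
h_n(\ldots,\alpha A+\beta B,\ldots)=\alpha\, h_n(\ldots,A,\ldots)+\beta\, h_n(\ldots,B,\ldots).
\]

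For $p_n$, the key step is to make sure each term is trace class, so that the trace can be split. Set
\[
T(A_1,\ldots,A_n)=A_1\cdots A_n-qA_1q\cdots qA_nq .
\]
First, $T$ is linear in $A_k$ with the other entries fixed, since both products are linear in $A_k$. In the second product the relevant map is $A\mapsto X'qAqY'$ with $X'=qA_1q\cdots qA_{k-1}q$ and $Y'=qA_{k+1}q\cdots qA_nq$. Next, we use the remark after Definition \ref{mom-mult-p}: $T(A_1,\ldots,A_n)$ is trace class for every choice of arguments in $\mathcal A$. This follows from $\mathrm{id}=p+q$ with $p$ of rank one. Indeed, expanding $A_1\cdots A_n=(p+q)A_1(p+q)\cdots(p+q)A_n(p+q)$, every term other than $qA_1q\cdots qA_nq$ contains a factor $p$, so it has finite rank. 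Hence $T$ is a finite sum of finite-rank operators. In particular, $T(\ldots,A,\ldots)$, $T(\ldots,B,\ldots)$ and their linear combination all lie in the trace-class ideal $\mathcal S_1(\mathcal H)$. Since $\operatorname{Tr}$ is linear on $\mathcal S_1(\mathcal H)$,
\[
p_n(\ldots,\alpha A+\beta B,\ldots)=\operatorname{Tr}\big(\alpha T(\ldots,A,\ldots)+\beta T(\ldots,B,\ldots)\big)=\alpha\, p_n(\ldots,A,\ldots)+\beta\, p_n(\ldots,B,\ldots).
\]

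The only step needing care is this trace-class justification. Trace linearity holds only on $\mathcal S_1(\mathcal H)$, and the two products separately need not be trace class in infinite dimensions, so one cannot split the trace as $\operatorname{Tr}(A_1\cdots A_n)-\operatorname{Tr}(qA_1q\cdots qA_nq)$. The finite-rank expansion above settles this, since it shows that the difference $T$ itself is always in $\mathcal S_1(\mathcal H)$. Everything else is routine.
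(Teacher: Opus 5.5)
Your proof is correct and takes essentially the same route as the paper, which also just invokes linearity of the inner product for $h_n$ and linearity of the trace for $p_n$. Your extra step is an improvement: you keep the difference $A_1\cdots A_n-qA_1q\cdots qA_nq$ grouped and show via $\mathrm{id}=p+q$ that it is finite rank. The paper instead splits $\operatorname{Tr}$ into four separate traces of products that need not be trace class when $\mathcal{H}$ is infinite-dimensional, whereas your version only ever applies $\operatorname{Tr}$ to finite-rank operators.
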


\begin{proof}
Let $\lambda \in \mathbb{C}$ and let $A_1,\ldots,A_i,\widetilde{A}_i,\ldots,A_n \in \mathcal{A}$. First, using the linearity of the inner product,
\begin{align*}
\langle A_1 &\cdots (A_i + \lambda \widetilde{A}_i) \cdots A_n \xi, \xi \rangle
\\
&=
\langle A_1 \cdots A_i \cdots A_n \xi, \xi \rangle
+
\lambda
\langle A_1 \cdots \widetilde{A}_i \cdots A_n \xi, \xi \rangle,
\end{align*}
which proves the multilinearity of $h_n$. Similarly, using the linearity of the trace,
\begin{align*}
&
\mathrm{Tr}
\left(
A_1 \cdots (A_i + \lambda \widetilde{A}_i) \cdots A_n
-
q A_1 q \cdots q (A_i + \lambda \widetilde{A}_i) q \cdots q A_n q
\right)
\\ 
&=\mathrm{Tr}\big(A_1\cdots A_i\cdots A_n+A_1\cdots \lambda \widetilde{A}_i\cdots A_n-qA_1q\cdots qA_iq\cdots qA_nq-qA_1q\cdots q\lambda \widetilde{A}_iq\cdots qA_nq\big)\\
&=\mathrm{Tr}\big(A_1\cdots A_i\cdots A_n\big)+\lambda\mathrm{Tr}\big(A_1\cdots \widetilde{A}_i\cdots A_n\big)\\ 
&\quad -\mathrm{Tr}\big(qA_1q\cdots qA_iq\cdots qA_nq\big)-\lambda\mathrm{Tr}\big(qA_1q\cdots q\widetilde{A}_iq\cdots qA_nq\big)\\
&=
p_n(A_1,\ldots,A_i,\ldots,A_n)
+
\lambda
p_n(A_1,\ldots,\widetilde{A}_i,\ldots,A_n),
\end{align*}
which proves the multilinearity of $p_n$.
\end{proof}
\subsection{Moment-moment formula}\label{moment-moment-section}

Definition \ref{mom-mult-h} can be extended to a family of multilinear functionals indexed by set partitions. For $\pi \in \mathcal{P}(n)$, define $h_\pi : \mathcal{A}^n \to \mathbb{C}$ by the following rule. If $\pi = \{V_1,\ldots,V_r\}$, then
\[
h_\pi[A_1,\ldots,A_n]
=
\prod_{k=1}^r
h_{|V_k|}
(A_{i_1^{(k)}},\ldots,A_{i_{|V_k|}^{(k)}}),
\]
where $V_k = \{i_1^{(k)},\ldots,i_{|V_k|}^{(k)}\}$. Analogously, Definition \ref{mom-mult-p} extends to multilinear functionals $p_\pi : \mathcal{A}^n \to \mathbb{C}$ defined by
\[
p_\pi[A_1,\ldots,A_n]
=
\prod_{k=1}^r
p_{|V_k|}
(A_{i_1^{(k)}},\ldots,A_{i_{|V_k|}^{(k)}}).
\]
The following three lemmas appear as results in \cite{arizmendi}. We provide detailed proofs using just the definitions of $h_n$ and $p_n$, as they are essential for the subsequent results. We denote by \((b^h_n)_{n\geq1}\) the family of Boolean cumulant with respect to the functional \(h:=h_1\).
\begin{lemma}
    Let \(A_1,\ldots,A_n\in\mathcal{A}\). We can express the \(n\)-th Boolean cumulant with respect to \(h\), \(b_n^h\), in the following way
    \begin{equation*}
         b_n^h(A_1,\ldots,A_n)=\operatorname{Tr}(pA_1q\cdots qA_np).
    \end{equation*}
\end{lemma}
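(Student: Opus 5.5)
We need to show $b_n^h(A_1,\ldots,A_n)=\operatorname{Tr}(pA_1q\cdots qA_np)$. Since $p$ is the rank-one projection $\eta\mapsto\langle\eta,\xi\rangle\xi$, the operator $pXp$ equals $\langle X\xi,\xi\rangle p$ for any bounded $X$, so $\operatorname{Tr}(pXp)=\langle X\xi,\xi\rangle$. The plan is therefore to prove
\[
b_n^h(A_1,\ldots,A_n)=\langle A_1qA_2q\cdots qA_n\xi,\xi\rangle
\]
directly from the implicit definition of the Boolean cumulants of $h$:
\[
h_n(A_1,\ldots,A_n)=\sum_{\sigma\in\mathcal I(n)} b^h_\sigma[A_1,\ldots,A_n].
\]
Since this recursion determines the Boolean cumulants uniquely, it suffices to check that the quantities $\beta_n(A_1,\ldots,A_n):=\langle A_1q\cdots qA_n\xi,\xi\rangle$ satisfy the same identity.

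The key step is to expand each $q=\mathrm{id}-p$ between consecutive factors. Write the identity operator between $A_k$ and $A_{k+1}$ as $\mathrm{id}=q+p$ for $k=1,\ldots,n-1$. Expanding gives
\[
A_1\cdots A_n=\sum_{S\subseteq\{1,\ldots,n-1\}} A_1E_1A_2E_2\cdots E_{n-1}A_n,
\qquad E_k=\begin{cases} p,& k\in S,\\ q,& k\notin S.\end{cases}
\]
Each subset $S$ of cut positions corresponds bijectively to an interval partition $\sigma\in\mathcal I(n)$: we cut after position $k$ exactly when $k\in S$.

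Whenever $E_k=p$, write $p=\xi\langle\cdot,\xi\rangle$. Then the inner product $\langle\cdots\xi,\xi\rangle$ factors as a product over the blocks of $\sigma$. For a block $\{i,\ldots,j\}$, the factor is $\langle A_iq\cdots qA_j\xi,\xi\rangle=\beta_{j-i+1}(A_i,\ldots,A_j)$. This gives
\[
h_n(A_1,\ldots,A_n)=\sum_{\sigma\in\mathcal I(n)}\beta_\sigma[A_1,\ldots,A_n].
\]
For $n=1$ this reads $\beta_1=h_1$, so induction on $n$ together with Möbius inversion on $\mathcal I(n)$ yields $\beta_n=b_n^h$.

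The only delicate step is the factorization: one must check that inserting $p=\langle\cdot,\xi\rangle\xi$ at the chosen positions splits the single inner product into an ordered product of inner products, one per block. One evaluates from the right: the rightmost block acts on $\xi$ and is then paired with $\xi$ through the $p$ to its left, producing a scalar times $\xi$, and the argument repeats. Finally, one notes that $pXp$ is trace class, being of rank one, so the trace is legitimate.
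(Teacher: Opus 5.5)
Your proposal is correct and follows essentially the same route as the paper. You insert $\mathrm{id}=p+q$ between consecutive factors, match the $2^{n-1}$ choices bijectively with interval partitions so that each term factors blockwise into $\operatorname{Tr}(pA_iq\cdots qA_jp)$, and conclude by induction from the Boolean moment--cumulant recursion; your explicit identity $pXp=\langle X\xi,\xi\rangle p$ just makes the factorization step the paper asserts a little more transparent.
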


\begin{proof}
     Since \(p\) is a projection onto \(\xi\in\mathcal{H}\), a unit vector, then
    \begin{equation*}
        h_n(A_1,\ldots,A_n)=\langle A_1\cdots A_n\xi,\xi\rangle=\operatorname{Tr}(pA_1\cdots A_np).
    \end{equation*}
    Then, since \(id_{\mathcal{H}}=p+q\), we can do the following
    \begin{align*}
        h_n(A_1,\ldots,A_n)=&\operatorname{Tr}(pA_1\cdots A_np)
        =\operatorname{Tr}(pA_1(p+q)A_2(p+q)\cdots (p+q)A_np),
    \end{align*}
    thus, by denoting \(p_{\delta_i}=p\) if \(\delta_i=0\) or \(p_{\delta_i}=q\) if \(\delta_i=1\), for each \(i=1,2,\ldots,n-1\), one has the following
    \begin{align*}
         h_n(A_1,\ldots,A_n)=\sum_{\delta_i\in\{0,1\}}\operatorname{Tr}(pA_1p_{\delta_1}A_2p_{\delta_2}\cdots p_{\delta_{n-1}}A_np).
    \end{align*}
    We define \(\widetilde{b}_k^h(A_1,\ldots,A_k):=\operatorname{Tr}(pA_1qA_2q\cdots qA_kp)\), for each \(k=1,2,\ldots,n\), and we will show that \(\widetilde{b}_k^h\) satisfies being the \(k\)-th Boolean cumulant with respect to \(h\). Observe that in the case \(\delta_j=1\) for each \(j=1,2,\ldots,n-1\), then we would have the term
    \begin{align*}
        \operatorname{Tr}(pA_1p_{\delta_1}A_2p_{\delta_2}\cdots p_{\delta_{n-1}}A_np)=&\operatorname{Tr}(pA_1p_1A_2p_1\cdots p_1A_np)\\ 
        =&\operatorname{Tr}(pA_1qA_2q\cdots qA_np)\\ 
        =&\widetilde{b}_n^h(A_1,\ldots,A_n).
    \end{align*}    
    In the case of having exactly one \(\delta_j=0\) for some \(1\leq j\leq n-1\), one has the following
    \begin{align*}
        \operatorname{Tr}(pA_1p_{\delta_1}\cdots p_{\delta_{j-1}}A_jp_{\delta_j}A_{j+1}p_{\delta_{j+1}}\cdots p_{\delta_{n-1}}A_np)=&\operatorname{Tr}(pA_1q\cdots qA_{j}pA_{j+1}q\cdots qA_np)\\
        =&\operatorname{Tr}(pA_1q\cdots qA_{j}p)\operatorname{Tr}(pA_{j+1}q\cdots qA_np)\\ 
        =&\widetilde{b}^h_{j}(A_1,\ldots,A_{j})\widetilde{b}^h_{n-j}(A_{j+1},\ldots,A_n).
    \end{align*}
    In this case, when there is exactly one \(\delta_j=0\), this can be interpreted as having an interval partition \(\pi\) of \([n]\) of the form \(\pi=\{\{A_1,\ldots,A_{j}\},\{A_{j+1},\ldots,A_n\}\}\), which is unique, that is, there is no other interval partition of \([n]\) by means of which we can interpret the previous expression. The same analogy can be made for any number of \(\delta_j=0\), and this will give us a unique interval partition of \([n]\) by means of which we can interpret the element
    \begin{equation*}
        \operatorname{Tr}(pA_1p_{\delta_1}\cdots p_{\delta_{j-1}}A_jp_{\delta_j}A_{j+1}p_{\delta_{j+1}}\cdots p_{\delta_{n-1}}A_np).
    \end{equation*}
    From the above, it follows that there is a bijection between the choices of \(\delta_i\) and the interval partitions of \([n]\), from which we can conclude that 
    \begin{align}
        h_n(A_1,\ldots,A_n)=&\sum_{\delta_i\in\{0,1\}}\operatorname{Tr}(pA_1p_{\delta_1}A_2p_{\delta_2}\cdots p_{\delta_{n-1}}A_np)\label{33}\\ 
        =&\sum_{\pi\in I(n)}\widetilde{b}^h_\pi[A_1,\ldots,A_n]\label{34},
    \end{align}
    where if \(\pi=\{V_1,\ldots,V_r\}\), one has \(\widetilde{b}^h_\pi[A_1,\ldots,A_n]=\widetilde{b}^h_{|V_1|}(A_1,\ldots,A_{|V_1|})\cdots \widetilde{b}^h_{n-|V_r|}(A_{|V_r|+1},\ldots,A_n)\). In order to prove that the Boolean cumulants with respect to \(h\) satisfy \(b^h_n(A_1,\ldots,A_n)=\widetilde{b}^h_n(A_1,\ldots,A_n)\), we will do it by induction over $n$. The case \(n=1\) holds since
    \begin{equation*}
        b_1^h(A_1)=h_1(A_1)=\operatorname{Tr}(pA_1p)=\widetilde{b}_1^h(A_1).
    \end{equation*}
    Suppose that it is satisfied that \(b^h_k(A_1,\ldots,A_k)=\widetilde{b}^h_k(A_1,\ldots,A_k)\), for each \(k=1,2,\ldots,n\). By the formula of Boolean moments-cumulants with respect to \(h\), and by applying the induction hypothesis
    \begin{align*}
        h_{n+1}(A_1,\ldots,A_{n+1})=&\sum_{\pi\in I(n+1)}b^h_\pi[A_1,\ldots,A_{n+1}]\\ 
        =&b^h_{n+1}(A_1,\ldots,A_{n+1})+\sum_{\stackbin[\pi\neq1_{n+1}]{}{\pi\in I(n+1)}}b^h_\pi[A_1,\ldots,A_{n+1}]\\ 
        =&b^h_{n+1}(A_1,\ldots,A_{n+1})+\sum_{\stackbin[\pi\neq1_{n+1}]{}{\pi\in I(n+1)}}\widetilde{b}^h_\pi[A_1,\ldots,A_{n+1}].
    \end{align*}
    On the other hand, by means of expression \eqref{34} we can say that
    \begin{equation*}
         h_{n+1}(A_1,\ldots,A_{n+1})=\widetilde{b}^h_{n+1}(A_1,\ldots,A_{n+1})+\sum_{\stackbin[\pi\neq1_{n+1}]{}{\pi\in I(n+1)}}\widetilde{b}^h_\pi[A_1,\ldots,A_{n+1}],
    \end{equation*}
    hence \(\widetilde{b}^h_{n+1}(A_1,\ldots,A_{n+1})=b^h_{n+1}(A_1,\ldots,A_{n+1})\). Thus concluding the induction argument. 
\end{proof}

\begin{lemma}\label{lema-mom-cum-booleanos}
    Let \(A_1,\ldots,A_n\in\mathcal{A}\). We can express the \(n\)-th multivariate moment \(p_n\) in terms of the Boolean cumulants \(b^h\) in the following way
    \begin{equation}\label{momentos_tau_cumulantes_booleanos_mu}
        p_n(A_1,\ldots,A_n)=\sum_{l=1}^n\quad \sum_{1\leq i(1)<\cdots<i(l)\leq n}\quad \prod_{k=1}^lb^h_{i(k+1)-i(k)}(A_{i(k)},A_{i(k)+1},\ldots,A_{i(k+1)-1}),
    \end{equation}
    with the convention that \(i(l+1)=i(1)\).
\end{lemma}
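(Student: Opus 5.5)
We expand both products in $p_n$ using $\mathrm{id}_{\mathcal H}=p+q$ and compare the result with the formula $b^h_k(B_1,\ldots,B_k)=\operatorname{Tr}(pB_1q\cdots qB_kp)$ from the previous lemma. Write the first term as
\[
\operatorname{Tr}(A_1\cdots A_n)=\operatorname{Tr}\big((p+q)A_1(p+q)A_2(p+q)\cdots(p+q)A_n\big),
\]
inserting a factor $p+q$ before each $A_j$, so there are $n$ slots in cyclic positions. Expanding gives
\[
\operatorname{Tr}(A_1\cdots A_n)=\sum_{\delta\in\{0,1\}^n}\operatorname{Tr}\big(p_{\delta_1}A_1p_{\delta_2}A_2\cdots p_{\delta_n}A_n\big),
\]
with $p_0=p$ and $p_1=q$ as in the previous proof. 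The term with all $\delta_j=1$ is $\operatorname{Tr}(qA_1q\cdots qA_n)$. By traciality and $q^2=q$, this equals $\operatorname{Tr}(qA_1q\cdots qA_nq)$, which is exactly the subtracted term in Definition~\ref{mom-mult-p}. These terms are not individually trace class in infinite dimensions, so I would rather work with the difference directly: write $A_1\cdots A_n-qA_1q\cdots qA_nq$ as a telescoping sum of operators each containing at least one $p$. Each such operator is finite rank, and expanding it yields exactly the terms with at least one $\delta_j=0$. Hence
\[
p_n(A_1,\ldots,A_n)=\sum_{\delta\neq(1,\ldots,1)}\operatorname{Tr}\big(p_{\delta_1}A_1\cdots p_{\delta_n}A_n\big).
\]

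Next we evaluate a single term. Let $\{j:\delta_j=0\}=\{i(1)<\cdots<i(l)\}$ with $l\geq1$; this gives a bijection between the index set in \eqref{momentos_tau_cumulantes_booleanos_mu} and the nonzero $\delta$. Use cyclicity of the trace to rotate the product so that it starts at $p$ in slot $i(1)$, and append one more $p$ at the end using $p=p^2$. The operator then reads
\[
pA_{i(1)}q\cdots qA_{i(2)-1}\,p\,A_{i(2)}q\cdots qA_{i(3)-1}\,p\cdots p\,A_{i(l)}q\cdots qA_{i(1)-1}\,p,
\]
where indices past $n$ wrap around cyclically. Since $p$ has rank one, $pXp=\langle X\xi,\xi\rangle p$, so the trace factorizes into the scalars
\[
\operatorname{Tr}\big(pA_{i(k)}q\cdots qA_{i(k+1)-1}p\big)=b^h_{i(k+1)-i(k)}\big(A_{i(k)},\ldots,A_{i(k+1)-1}\big),
\]
by the previous lemma. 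The last factor uses the cyclic convention $i(l+1)=i(1)$, so its length is $n-i(l)+i(1)$ and its arguments are read cyclically. Summing over $l$ and over $i(1)<\cdots<i(l)$ gives \eqref{momentos_tau_cumulantes_booleanos_mu}.

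The main obstacle is the justification of the first step, not the combinatorics. In infinite dimensions, $\operatorname{Tr}(A_1\cdots A_n)$ and $\operatorname{Tr}(qA_1q\cdots qA_nq)$ need not exist separately. I would handle this by writing
\[
A_1\cdots A_n-qA_1q\cdots qA_nq
\]
as the sum over $\delta$ of $p_{\delta_1}A_1\cdots p_{\delta_n}A_np_{\delta_1}$, omitting the all-$q$ term. This uses $A_1\cdots A_n=\sum_{\delta}p_{\delta_1}A_1p_{\delta_2}\cdots p_{\delta_n}A_np_{\delta'}$ with a final free projection, and one must match the endpoints so the rotation is legitimate. Every retained term contains a $p$ and is therefore finite rank. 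Cyclicity of the trace is then applied only to these finite-rank operators, so each rotation is valid.
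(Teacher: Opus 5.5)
Your proposal is correct and is essentially the paper's proof. In your rigorous version you insert $\mathrm{id}_{\mathcal H}=p+q$ in $n+1$ slots inside the difference, so that the all-$q$ term cancels at the operator level and only finite-rank terms remain. You then discard the terms whose first and last projections differ, rotate cyclically, and factor through the rank-one projection $p$ to get products of the Boolean cumulants $\operatorname{Tr}(pB_1q\cdots qB_kp)$ from the previous lemma.

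The one point you leave implicit in ``one must match the endpoints'' is why the mismatched terms drop out: $\operatorname{Tr}(pXq)=\operatorname{Tr}(qpX)=0$, and symmetrically $\operatorname{Tr}(qXp)=0$, for these finite-rank operators. This is exactly the observation the paper uses.
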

This lemma introduces a new notation in which the coefficients are taken cyclically. This notation appears in the terms \(b^h_{i(k+1)-i(k)}(A_{i(k)},A_{i(k)+1},\ldots,A_{i(k+1)-1})\) when \(k=l\), that is, the term \(b^h_{i(1)-i(l)}(A_{i(l)},A_{i(l)+1},\ldots,A_{i(1)-1})\) denotes the term
\begin{equation*}
    b^h_{n+i(1)-i(l)}(A_{i(l)},A_{i(l)+1},\ldots,A_{n-1},A_n,A_1,A_2,\ldots,A_{i(1)-1}).
\end{equation*}

\begin{proof}
    Let us use the same trick of multiplying by \(id_{\mathcal{H}}=p+q\) that we used in the previous lemma, but now in the following way
    \begin{align*}
        p_n(A_1,\ldots,A_n)=&\operatorname{Tr}(A_1\cdots A_n-qA_1q\cdots qA_nq)\\
        =&\operatorname{Tr}((p+q)A_1(p+q)\cdots (p+q)A_n(p+q)-qA_1q\cdots qA_nq)\\
        =&\operatorname{Tr}\left(\sum_{\delta_i \in\{0,1\}}p_{\delta_1}A_1p_{\delta_2}\cdots p_{\delta_n}A_np_{\delta_{n+1}}-qA_1q\cdots qA_nq\right),
    \end{align*}
    where \(p_{\delta_i}=p\) if \(\delta_i=0\) and \(p_{\delta_i}=q\) if \(\delta_i=1\), for each \(i=1,2,\ldots,n+1\). Noting that when \(\delta_i=1\), for every \(i=1,2,\ldots,n+1\), the term \(qA_1q\cdots qA_n q\) is cancelled, and by linearity of the trace one has
    \begin{equation*}
         p_n(A_1,\ldots,A_n)=\sum_{\stackbin[(\delta_1,\ldots,\delta_n)\neq(1,\ldots,1)]{}{\delta_i \in\{0,1\}}}\operatorname{Tr}\left(p_{\delta_1}A_1p_{\delta_2}\cdots p_{\delta_n}A_np_{\delta_{n+1}}\right) .
    \end{equation*}
    Moreover, when \(p_{\delta_1}\neq p_{\delta_{n+1}}\), one has \(\operatorname{Tr}(p_{\delta_1}A_1p_{\delta_2}\cdots p_{\delta_n}A_np_{\delta_{n+1}})=0\). Therefore, the previous expression can be written as
    \begin{align}\label{moment-transpuesta}
         p_n(A_1,\ldots,A_n)=&\sum_{\stackbin[(\delta_1,\ldots,\delta_n)\neq(1,\ldots,1)]{}{\delta_i \in\{0,1\}}}\operatorname{Tr}(p_{\delta_1}A_1p_{\delta_2}\cdots p_{\delta_n}A_np_{\delta_{1}}),
    \end{align}
    the condition \((\delta_1,\ldots,\delta_n)\neq(1,\ldots,1)\) tells us that there is at least one \(p_{\delta_k}=p\). In the case in which for exactly one \(k\in\{1,2,\ldots,n\}\) it is satisfied that \(p_{\delta_k}=p\), then one has 
    \begin{align*}
        \operatorname{Tr}(p_{\delta_1}A_1p_{\delta_2}\cdots A_{k-1}p_{\delta_k}A_k\cdots  p_{\delta_n}A_np_{\delta_{1}})=&\operatorname{Tr}(qA_1q\cdots A_{k-1}pA_k\cdots  qA_nq)\\ 
        =&\operatorname{Tr}(qA_1q\cdots A_{k-1}p^2A_k\cdots  qA_nq)\\ 
        =&\operatorname{Tr}(pA_k\cdots qA_nq A_1q\cdots A_{k-1}p)\\ 
        =&b_{n}^\mu (A_k,\ldots,A_{k-1}).
    \end{align*}
    In the case of having exactly two indices \(i(1)\) and \(i(2)\), with \(i(1)<i(2)\), that satisfy \(p_{\delta_{i(1)}}=p\) and \(p_{\delta_{i(2)}}=p\), one would have the following
    \begin{align*}
        \operatorname{Tr}(p_{\delta_1}A_1p_{\delta_2}&\cdots A_{k-1}p_{\delta_k}A_k\cdots  p_{\delta_n}A_np_{\delta_{1}})\\
        =&\operatorname{Tr}(qA_1q\cdots A_{i(1)-1}pA_{i(1)} \cdots A_{i(2)-1}pA_{i(2)}\cdots qA_nq)\\ 
        =&\operatorname{Tr}(qA_1q\cdots A_{i(1)-1}p^2A_{i(1)} \cdots A_{i(2)-1}p^2A_{i(2)}\cdots qA_nq)\\
        =&\operatorname{Tr}(pA_{i(1)} \cdots A_{i(2)-1}p^2A_{i(2)}\cdots qA_nqA_1q\cdots A_{i(1)-1}p)\\ 
        =&\operatorname{Tr}(pA_{i(1)} \cdots A_{i(2)-1}p)\operatorname{Tr}(pA_{i(2)}\cdots qA_nqA_1q\cdots A_{i(1)-1}p)\\ 
        =&b_{i(2)-i(1)}^h(A_{i(1)},\ldots,A_{i(2)-1})b_{n+i(1)-i(2)}^h(A_{i(2)},\ldots,A_{i(1)-1}).
    \end{align*}
    From these two particular cases we note that each one can be expressed as a product of Boolean cumulants of \(h\). In a general case, considering all possible choices of \(\delta_i\), one has that expression \eqref{moment-transpuesta} can be viewed in the following way
    \begin{align*}
        p_n(A_1,\ldots,A_n)=&\sum_{\stackbin[(\delta_1,\ldots,\delta_n)\neq(1,\ldots,1)]{}{\delta_i \in\{0,1\}}}\operatorname{Tr}(p_{\delta_1}A_1p_{\delta_2}\cdots p_{\delta_n}A_np_{\delta_{1}})\\ 
        =&\sum_{l=1}^n\quad \sum_{1\leq i(1)<\cdots<i(l)\leq n}\quad \prod_{k=1}^lb^h_{i(k+1)-i(k)}(A_{i(k)},A_{i(k)+1},\ldots,A_{i(k+1)-1}),
    \end{align*}
    where the first sum represents the number of \(\delta_{i}\) such that \(p_{\delta_i}=p\), and the second sum runs over all possible choices of those values between \(1\) and \(n\).
\end{proof} 

\begin{lemma}
    Let \(A_1,\ldots,A_n\in\mathcal{A}\). We can write \(p_n(A_1,\ldots,A_n)\) in the following way
    \begin{equation}\label{momentos-booleanos}
        p_n(A_1,\ldots,A_n)=h_n(A_1,\ldots,A_n)+\sum_{1\leq i<j\leq n}b^h_{n-j+i+1}(A_j,A_{j+1},\ldots,A_i)h_{j-i-1}(A_{i+1},\ldots,A_{j-1}),
    \end{equation}
    where \(b_n^h(A_1,\ldots,A_n)\) represents the \(n\)-th Boolean cumulant with respect to \(h\).
\end{lemma}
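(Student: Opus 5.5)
The plan is to start from the cyclic Boolean expansion of Lemma~\ref{lema-mom-cum-booleanos} and regroup its terms according to the block that contains the ``gap'' between $n$ and $1$ in the cyclic order. Recall that each term of \eqref{momentos_tau_cumulantes_booleanos_mu} is indexed by a nonempty set of cut positions $1\leq i(1)<\cdots<i(l)\leq n$. It is the product of Boolean cumulants $b^h$ over the cyclic intervals $\{i(k),\ldots,i(k+1)-1\}$, where $i(l+1)=i(1)$. These cyclic intervals form a cyclic interval partition of $[n]$; when $l=1$ we have a single block of length $n$ read from the starting point $i(1)$. We use the convention $h_0:=1$, which is needed for the terms with $j=i+1$ in \eqref{momentos-booleanos}.

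\emph{Case 1: $i(1)=1$.} Then no block wraps around from $n$ to $1$. The blocks $\{i(k),\ldots,i(k+1)-1\}$ for $k<l$, together with $\{i(l),\ldots,n\}$, form an ordinary interval partition of $[n]$. Conversely, every interval partition of $[n]$ arises exactly once in this way, from the set of left endpoints of its blocks. Summing over these terms and applying the Boolean moment--cumulant formula for $h$ gives exactly $h_n(A_1,\ldots,A_n)$.

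\emph{Case 2: $i(1)>1$.} Then the last block $\{i(l),\ldots,n,1,\ldots,i(1)-1\}$ genuinely wraps around. Set $j:=i(l)$ and $i:=i(1)-1$, so that $1\leq i<j\leq n$. This block contributes $b^h_{n-j+i+1}(A_j,\ldots,A_n,A_1,\ldots,A_i)$. The remaining cut points $i+1=i(1)<\cdots<i(l-1)<i(l)=j$ determine an interval partition of $\{i+1,\ldots,j-1\}$, which is empty when $l=1$, i.e.\ when $j=i+1$. Conversely, fix $i<j$. Then any interval partition of $\{i+1,\ldots,j-1\}$, together with the wrapping block, gives exactly one admissible set of cuts, so this is a bijection. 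For fixed $(i,j)$ we factor out the wrapping cumulant. Applying the Boolean moment--cumulant formula again to the sum over interval partitions of $\{i+1,\ldots,j-1\}$ yields $h_{j-i-1}(A_{i+1},\ldots,A_{j-1})$, and this equals $1$ in the empty case, consistent with $h_0=1$. Adding Case 1 and Case 2 gives \eqref{momentos-booleanos}.

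The main obstacle is bookkeeping rather than any conceptual step. One must check that the correspondence between cut sets with $i(1)>1$ and triples (pair $i<j$, interval partition of $\{i+1,\ldots,j-1\}$) is a genuine bijection. In particular, the degenerate case $l=1$ with $i(1)>1$ must correspond precisely to $j=i+1$. There the single block $b^h_n(A_{i+1},\ldots,A_i)$ has size $n-j+i+1=n$, as required. The other point to verify is that the cyclic index convention of Lemma~\ref{lema-mom-cum-booleanos} matches the argument order $(A_j,\ldots,A_n,A_1,\ldots,A_i)$ used in \eqref{momentos-booleanos}.
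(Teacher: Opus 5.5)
Your proposal is correct and follows essentially the same route as the paper. Both split the cyclic Boolean expansion of Lemma~\ref{lema-mom-cum-booleanos} by whether $i(1)=1$, which recovers $h_n$ through the Boolean moment--cumulant formula, or $i(1)>1$, where you factor out the wrap-around cumulant and resum the inner blocks into $h_{j-i-1}$ via $j=i(l)$, $i=i(1)-1$; your explicit treatment of the degenerate case $l=1$, $j=i+1$ via the convention $h_0=1$ is cleaner than the paper's bookkeeping, which leaves that case implicit.
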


\begin{proof}
    Let \(A_1,\ldots,A_n\in\mathcal{A}\). Using the decomposition of moments in terms of Boolean cumulants, the $n$-th moment of $h$ can be written as 
    \begin{align*}
        h_n(A_1,\ldots,A_n)=&\sum_{\pi\in\mathcal{I}(n)}b_\pi^h[A_1,\ldots,A_n]\\ 
        =&\sum_{l=1}^n\sum_{1=i(1)<\cdots<i(l)\leq n}\quad \prod_{k=1}^lb_{i(k+1)-i(k)}^h(A_{i(k)},\ldots,A_{i(k+1)-1}).
    \end{align*}
    Combining this identity with \eqref{momentos_tau_cumulantes_booleanos_mu} from Lemma~\ref{lema-mom-cum-booleanos}, we obtain
    \begin{align*}
        p_n(A_1,\ldots,A_n)=&h_n(A_1,\ldots,A_n)+\left(p_n(A_1,\ldots,A_n)-h_n(A_1,\ldots,A_n)\right)\\
        =&h_n(A_1,\ldots,A_n)+\sum_{l=1}^n\sum_{1<i(1)<\cdots<i(l)\leq n}\quad \prod_{k=1}^lb_{i(k+1)-i(k)}^h(A_{i(k)},\ldots,A_{i(k+1)-1}).
    \end{align*}
    Moreover
    \begin{align*}
           \sum_{l=1}^n&\sum_{1<i(1)<\cdots<i(l)\leq n}\quad \prod_{k=1}^lb_{i(k+1)-i(k)}^h(A_{i(k)},\ldots,A_{i(k+1)-1})\\ 
           =&\sum_{l=1}^n\sum_{1<i(1)<i(l)\leq n}\quad \prod_{k=1}^{l-1}b_{i(k+1)-i(k)}^h(A_{i(k)},\ldots,A_{i(k+1)-1})b_{n-i(l)+i(1)}^h(A_{i(l)},\ldots,A_{i(1)-1})\\ 
           =&\sum_{l=1}^n\sum_{1<i(1)<i(l)\leq n}h_{i(l)-i(1)}(A_{i(1)},\ldots,A_{i(l)-1})b_{n-i(l)+i(1)}^h(A_{i(l)},\ldots,A_{i(1)-1}).
    \end{align*}
    By making a change of variable \(i(l)=j\) and \(i(1)=i+1\), we obtain the following
    \begin{align*}
        \sum_{l=1}^n&\sum_{1<i(1)<\cdots<i(l)\leq n}\quad \prod_{k=1}^lb_{i(k+1)-i(k)}^h(A_{i(k)},\ldots,A_{i(k+1)-1})\\ 
        =&\sum_{1\leq i<j\leq n }h_{j-i-1}(A_{i+1},\ldots,A_{j-1})b_{n-j+i+1}^h(A_{j},\ldots,A_{i}).
    \end{align*}
    Then
    \begin{equation*}
        p_n(A_1,\ldots,A_n)=h_n(A_1,\ldots,A_n)+\sum_{1\leq i<j\leq n }b_{n-j+i+1}^h(A_{j},\ldots,A_{i})h_{j-i-1}(A_{i+1},\ldots,A_{j-1}).
    \end{equation*}
\end{proof}
Following identity \eqref{momentos-booleanos}, and making use of the formula expressing boolean cumulants in terms of moments, one obtains
\begin{equation*}
    p_n(A_1,\ldots,A_n)=h_n(A_1,\ldots,A_n)+\sum_{\stackbin[]{}{1\leq i<j\leq n}}b^h_{n-j+i+1}(A_j,A_{j+1},\ldots,A_i)h_{j-i-1}(A_{i+1},\ldots,A_{j-1})
\end{equation*}
\begin{equation}
    \label{momentos-mixtos}
    =h_n(A_1,\ldots,A_n)+\sum_{\stackbin[]{}{1\leq i<j\leq n}} \left[\sum_{\sigma\in\mathcal{I}[j,\ldots,i]}(-1)^{\#(\sigma)-1}h_{\sigma}[A_1,\ldots,A_n]\right]h_{j-i-1}(A_{i+1},\ldots,A_{j-1}),
\end{equation}
where \(\mathcal{I}[j,\ldots,i]\) represents the set of interval partitions in which each partition has \(j\) as its first element and \(i\) as its last element. In order to prove Theorem \ref{moment-moment}, the goal is to rewrite the double sum in \eqref{momentos-mixtos} as a sum over the cyclic interval partitions of $[n]$, excluding the one-block partition $\{1,2,\ldots,n\}$. More precisely, we aim to transform 
\begin{equation*}
        \sum_{\stackbin[]{}{1\leq i<j\leq n}} \left[\sum_{\sigma\in \mathcal{I}[j,\ldots,i]}(-1)^{\#(\sigma)-1}h_{\sigma}[A_1,\ldots,A_n]\right]h_{j-i-1}(A_{i+1},\ldots,A_{j-1})
    \end{equation*}
    to
    \begin{equation*}
        \sum_{\stackbin[\pi\neq\{1,2,\ldots,n\}]{}{\pi\in \mathcal{CI}(n)}}h_\pi[A_1,\ldots,A_n](-1)^{\#(\pi)-1}.
    \end{equation*}
Lets consider $\xi:D\times \mathcal{I}[j,\ldots,i]\longrightarrow\mathcal{CI}(n)\backslash\{1,2,\ldots,n\}$, where  $D=\{(i,j)\in\mathbb{Z}^2:1\leq i<j\leq n\}$, defined by
   \begin{equation}\label{aplicación}
       \xi\left((i,j),\sigma\right)=\sigma\cup\{i+1,\dots,j-1\}.
   \end{equation}
If $\xi$ were bijective, it would be possible to perform the change directly; however, this is not the case. The following theorem formalizes this result
\begin{teorema}\label{moment-moment}
    Let $A_1,\ldots,A_n\in \mathcal{A}$ random variables. The multivariate moments $p_n(A_1,\ldots,A_n)$ can be represented by
    \begin{equation*}
    p_n(A_1,\ldots,A_n)=\sum_{\pi\in\mathcal{CI}(n)}(-1)^{\#(\pi)-1}h_\pi[A_1,\ldots,A_n],
    \end{equation*}
    where $\mathcal{CI}(n)$ is the set of cyclic intervals partitions of $n$.
\end{teorema}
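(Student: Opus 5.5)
We will not try to repair the map $\xi$ in \eqref{aplicación}. Instead we work directly from Lemma~\ref{lema-mom-cum-booleanos} and encode everything by cut configurations on the circle, as in the remark preceding the count $|\mathcal{CI}(n)|=2^n-1$.

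\textbf{Conventions.} Place $1,\dots,n$ on a circle. Let the cut positions be the $n$ gaps, labelled by $k\in[n]$, where gap $k$ is the gap immediately before $k$ (between $k-1$ and $k$, cyclically). A nonempty set $T\subseteq[n]$ of cuts determines a cyclic interval partition $\pi_T$. Each block of $\pi_T$ is read in cyclic order starting at the element right after a cut. When $|T|=1$ with $T=\{j\}$, the single block is the ordered tuple $1_{n,j}$, or $\{1,\dots,n\}$ if $j=1$. Hence $T\mapsto\pi_T$ is precisely the bijection between nonempty subsets of $[n]$ and $\mathcal{CI}(n)$, with $\#(\pi_T)=|T|$. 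We read $h_{\pi_T}$ with each block in this cyclic reading order; this is the only sensible meaning for wrapping blocks and for $\{1_{n,j}\}$. The same convention defines $b^h_{\pi_T}$.

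\textbf{Step 1.} Rewrite Lemma~\ref{lema-mom-cum-booleanos} as
\[
p_n(A_1,\ldots,A_n)=\sum_{\emptyset\neq S\subseteq[n]} b^h_{\pi_S}[A_1,\ldots,A_n].
\]
Here $S=\{i(1)<\cdots<i(l)\}$ is the set of positions where $p$ was inserted, and the $k$-th factor is the Boolean cumulant of the cyclic block $(A_{i(k)},\ldots,A_{i(k+1)-1})$.

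\textbf{Step 2.} Expand each factor by the Boolean moment--cumulant inversion
\[
b^h_m(B_1,\ldots,B_m)=\sum_{\sigma\in\mathcal{I}(m)}(-1)^{\#(\sigma)-1}h_\sigma[B_1,\ldots,B_m],
\]
applied to the block in its cyclic reading order. An interval partition of the block beginning at cut $i(k)$ is the same thing as a set of additional cuts strictly inside that block. Consequently a choice of refinements for all blocks of $\pi_S$ is the same as a set $T\supseteq S$ of cuts. The produced term is exactly $h_{\pi_T}$: every block of $\pi_T$ is read starting right after its cut, and its reading order agrees with the order inherited from the enclosing block of $\pi_S$. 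The sign is $\prod_k(-1)^{\#\sigma_k-1}=(-1)^{|T|-|S|}$. Therefore
\[
p_n=\sum_{\emptyset\ne T\subseteq[n]} h_{\pi_T}\sum_{\emptyset\neq S\subseteq T}(-1)^{|T|-|S|}.
\]

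\textbf{Step 3.} The inner sum over all $S\subseteq T$ is $(1-1)^{|T|}=0$, so removing the term $S=\emptyset$ gives $-(-1)^{|T|}=(-1)^{|T|-1}=(-1)^{\#(\pi_T)-1}$. Summing over $T$ via the bijection with $\mathcal{CI}(n)$ yields the theorem.

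The main obstacle is the bookkeeping in Step~2. One must check that the composite of the cyclic block structure of Lemma~\ref{lema-mom-cum-booleanos} with linear interval refinements reproduces exactly the ordered blocks of $\pi_T$. The case $|T|=1$ needs particular care, since it must yield the $n$ distinct ordered one-block terms $h_n(A_j,\ldots,A_{j-1})$, each with sign $+1$. This is where the paper's convention of treating $\{1_{n,j}\}$ as separate elements of $\mathcal{CI}(n)$ is essential. As a sanity check, for $n=2$ the formula gives $h_2(A_1,A_2)+h_2(A_2,A_1)-h_1(A_1)h_1(A_2)$, which one can compare with Step~1 directly.
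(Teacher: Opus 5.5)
Your argument is correct, and it follows a different route from the paper. The paper starts from the intermediate identity \eqref{momentos-booleanos}, $p_n=h_n+\sum_{i<j}b^h_{n-j+i+1}(A_j,\ldots,A_i)\,h_{j-i-1}(A_{i+1},\ldots,A_{j-1})$, and expands only the single wrap-around Boolean cumulant into moments. Since the resulting map $\xi$ is not injective, it then counts by hand the preimages of each $\pi\in\mathcal{CI}(n)\setminus\{1_n\}$ with $r$ blocks. When $1\sim_\pi n$ there are $r-1$ terms of sign $(-1)^{r-2}$ and $r$ of sign $(-1)^{r-1}$. When there is a cut between $n$ and $1$ there are only $r-2$ and $r-1$, a boundary effect of the linear constraint $i<j$. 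The term $h_n$ is added separately. You instead expand \emph{every} Boolean cumulant in the fully cyclic expansion of Lemma~\ref{lema-mom-cum-booleanos} and index the resulting terms by pairs of cut sets $S\subseteq T$. The fibre over $\pi_T$ is then simply the family of nonempty $S\subseteq T$, and the coefficient is the binomial sum $\sum_{\emptyset\neq S\subseteq T}(-1)^{|T|-|S|}=(-1)^{|T|-1}$. This removes the case distinction and treats the rotated one-block terms $\{1_{n,j}\}$ like all the others. It also shows the structure: the theorem is inclusion--exclusion on the Boolean lattice of cut configurations, applied to the Boolean identity $p_n=\sum_{\pi\in\mathcal{CI}(n)}b^h_\pi$ (your Step 1), which the paper only states afterwards as a separate theorem. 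In return, the paper's route gives the explicit multiplicity with which each $h_\pi$ occurs in the expansion of \eqref{momentos-booleanos}, at the price of heavier bookkeeping. You are also right to insist that every block, in particular wrapping blocks and the tuples $1_{n,j}$, be read cyclically starting from the element after its cut. The paper's definition of $h_\pi$ leaves this implicit; only worked examples such as $h_2(C,A)$ in $p_3$ show it. With increasing order the identity would fail in general when $h$ is not tracial.
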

\begin{proof}
Let $\pi\in\mathcal{CI}(n)\backslash\{1,2,\ldots,n\}$ be a cyclic interval partition with $r$ intervals, which we write in the following form
\begin{equation*}
        \pi=\{V_1,V_2,\ldots,V_r\}=\left\{\{i_1,i_1+1,\ldots,i_2-1\},\{i_2,i_2+1,\ldots,i_3-1\},\ldots,\{i_{r},i_{r}+1\ldots,i_1-1\}\right\}.
    \end{equation*}
 Observe that we have two cases: \textbf{Case 1)} some block may contain the points $1$ and $n$; \textbf{Case 2)} they may belong to different blocks. See Figure \ref{fig:caso general fig 1} and \ref{fig:caso general fig2}.
\begin{center}
    \begin{figure}[ht]
    \centering

    \begin{minipage}[t]{0.45\textwidth}
        \centering
        \begin{tikzpicture}
    \def\r{2}
    \def\R{2.5}

    \draw[thick] (0,0) circle (\r);

    \foreach \k/\ang/\lab in {
        1/90/{$i_{r-1}$},
        2/65.45/{$i_r-1$},
        3/40.91/{ },
        4/16.36/{$n$},
        5/-8.18/{$1$},
        6/-32.73/{},
        7/-57.27/{$i_1$},
        8/-81.82/{$i_2-1$},
        9/-106.36/{$i_2$},
        10/-130.91/{$i_3-1$},
        11/-155.45/{ },
        12/-180/{ }
    }{
        \coordinate (P\k) at ({\r*cos(\ang)},{\r*sin(\ang)});
        \coordinate (Q\k) at ({\R*cos(\ang)},{\R*sin(\ang)});

        \fill (P\k) circle (1.2pt);

\ifnum\k=1
    \node[font=\scriptsize, yshift=3pt] 
    at ({0.5*\r*cos(\ang)},{0.85*\r*sin(\ang)}) {\lab};
\else
    \ifnum\k=8
        \node[font=\scriptsize, yshift=-3pt] 
        at ({0.8*\r*cos(\ang)},{0.8*\r*sin(\ang)}) {\lab};
    \else
        \node[font=\scriptsize] 
        at ({0.8*\r*cos(\ang)},{0.8*\r*sin(\ang)}) {\lab};
    \fi
\fi
        
        \draw[blue, thick] (P\k) -- (Q\k);
    }

    \draw[blue, thick] (Q1) arc[start angle=90, end angle=65.45, radius=\R];

    \draw[blue, thick] (Q3) arc[start angle=40.91, end angle=-32.73, radius=\R];

    \draw[blue, thick] (Q7) arc[start angle=-57.27, end angle=-81.82, radius=\R];

    \draw[blue, thick] (Q9) arc[start angle=-106.36, end angle=-130.91, radius=\R];

    \draw[blue, thick] (Q11) arc[start angle=-155.45, end angle=-180, radius=\R];

\draw[orange, thick]
    ({0.7*\r*cos((65.45+40.91)/2)},{0.7*\r*sin((65.45+40.91)/2)})
    --
    ({\R*cos((65.45+40.91)/2)},{\R*sin((65.45+40.91)/2)});

\draw[orange, thick]
    ({0.7*\r*cos((-32.73-57.27)/2)},{0.7*\r*sin((-32.73-57.27)/2)})
    --
    ({\R*cos((-32.73-57.27)/2)},{\R*sin((-32.73-57.27)/2)});

\end{tikzpicture}
        \caption{Graphical representation of the case where $1$ and $n$ belong to the same block. And we consider $i=i_r-1$ and $j=i_1$.}
        \label{fig:caso general fig 1}
    \end{minipage}
    \hfill
    \begin{minipage}[t]{0.45\textwidth}
        \centering
        \begin{tikzpicture}
    \def\r{2}
    \def\R{2.5}

    \draw[thick] (0,0) circle (\r);

    \foreach \k/\ang/\lab in {
        1/90/{$i_{r-1}$},
        2/65.45/{$i_r-1$},
        3/40.91/{ $i_r$},
        4/16.36/{$n$},
        5/-8.18/{$1$},
        6/-32.73/{$i_1-1$},
        7/-57.27/{$i_1$},
        8/-81.82/{$i_2-1$},
        9/-106.36/{$i_2$},
        10/-130.91/{$i_3-1$},
        11/-155.45/{ },
        12/-180/{ }
    }{
        \coordinate (P\k) at ({\r*cos(\ang)},{\r*sin(\ang)});
        \coordinate (Q\k) at ({\R*cos(\ang)},{\R*sin(\ang)});

        \fill (P\k) circle (1.2pt);

        \ifnum\k=1
            \node[font=\scriptsize, yshift=3pt] 
            at ({0.8*\r*cos(\ang)},{0.8*\r*sin(\ang)}) {\lab};
        \else
            \ifnum\k=8
                \node[font=\scriptsize, yshift=-3pt] 
                at ({0.8*\r*cos(\ang)},{0.8*\r*sin(\ang)}) {\lab};
            \else
                \node[font=\scriptsize] 
                at ({0.8*\r*cos(\ang)},{0.8*\r*sin(\ang)}) {\lab};
            \fi
        \fi

        \draw[blue, thick] (P\k) -- (Q\k);
    }

\draw[blue, thick] (Q1) arc[start angle=90, end angle=65.45, radius=\R];

\draw[blue, thick] (Q3) arc[start angle=40.91, end angle=16.36, radius=\R];
\draw[blue, thick] (Q5) arc[start angle=-8.18, end angle=-32.73, radius=\R];

\draw[blue, thick] (Q7) arc[start angle=-57.27, end angle=-81.82, radius=\R];
\draw[blue, thick] (Q9) arc[start angle=-106.36, end angle=-130.91, radius=\R];
\draw[blue, thick] (Q11) arc[start angle=-155.45, end angle=-180, radius=\R];

\draw[orange, thick]
    ({0.7*\r*cos((16.36-8.18)/2)},{0.7*\r*sin((16.36-8.18)/2)})
    --
    ({\R*cos((16.36-8.18)/2)},{\R*sin((16.36-8.18)/2)});


    \draw[blue, thick] (Q7) arc[start angle=-57.27, end angle=-81.82, radius=\R];
    \draw[blue, thick] (Q9) arc[start angle=-106.36, end angle=-130.91, radius=\R];
    \draw[blue, thick] (Q11) arc[start angle=-155.45, end angle=-180, radius=\R];

    \draw[orange, thick]
        ({0.7*\r*cos((16.36-8.18)/2)},{0.7*\r*sin((16.36-8.18)/2)})
        --
        ({\R*cos((16.36-8.18)/2)},{\R*sin((16.36-8.18)/2)});


\end{tikzpicture}
        \caption{Graphical representation of the case where $1$ and $n$ belong to different blocks. And we consider $i=n$ and $j=1$.}
        \label{fig:caso general fig2}
    \end{minipage}
\end{figure}
\end{center}
\begin{itemize}
    \item \textbf{Case $1$ and $n$ in the same block:} We consider $((i,j),\rho)\in D\times \mathcal{I}[j,\ldots,i]$, where the pair $(i,j)$ is chosen so that $i$ and $j$ are separated by exactly one block of $\pi$. Under this condition, we may consider the following interval partitions:
        \begin{align*}
            \rho_1=&\{V_2,V_3,\ldots,V_{r}\}\in\mathcal{I}[i_2,\ldots,i_1-1],\\ 
            \rho_2=&\{V_3,V_4,\ldots,V_1\}\in\mathcal{I}[i_3,\ldots,i_2-1],\\
            \rho_3=&\{V_4,V_5,\ldots,V_2\}\in\mathcal{I}[i_4,\ldots,i_{3}-1],\\ 
            \vdots&\\ 
            \rho_{r-1}=&\{V_r,V_1,\ldots,V_{r-2}\}\in\mathcal{I}[i_{r},\ldots,i_{r-1}-1].
        \end{align*}
        Observe that we do not consider $\rho_0=\{V_1,V_2,\ldots,V_{r-1}\}\in\mathcal{I}[i_1,\ldots,i_{r}-1]$ since in this case one would obtain $j\leq i$, which is not allowed, graphically, this situation is illustrated in Figure~\ref{fig:caso general fig 1}. For the above $r-1$ elements $((i,j),\rho)$, one has
        \begin{equation*}
            \xi((i_1-1,i_2),\rho_1)=\xi((i_2-1,i_3),\rho_2)=\cdots=\xi((i_{r-1}-1,i_{r}),\rho_{r-1})=\pi.
        \end{equation*}
        Then, for $((i_1-1,i_2),\rho_1)$ we have
        \begin{align*}
            (-1)^{\#(\rho_1)-1}h_{\rho_1}[A_1,\ldots,A_n]&h_{i_{2}-(i_1-1)-1}(A_{i},\ldots,A_{i_2-1})
            \\=&(-1)^{r-2}h_{\#(V_1)}(A_{i_1},\ldots,A_{i_2-1})\cdots h_{\#(V_r)}(A_{i_{r}},\ldots,A_{i_1-1})\\ 
            =&(-1)^{\#(\pi)-2}h_\pi[A_1,\ldots,A_n], 
        \end{align*}
        and the same result is obtained for $((i_{2}-1,i_3),\rho_2),\ldots,((i_{r-1}-1,i_r),\rho_{r-1})$.
        
        On the other hand, by considering $((i,j),\sigma)\in D\times \mathcal{I}[j,\ldots,i]$, where the pair $(i,j)$ is such that they are consecutive, we can consider 
        \begin{align*}
            \sigma_1=&\{V_1,V_2,\ldots,V_r\}\in\mathcal{I}[i_1,\ldots,i_1-1],\\
            \sigma_2=&\{V_2,V_3,\ldots,V_1\}\in\mathcal{I}[i_2,\ldots,i_2-1],\\ 
            \vdots&\\ 
            \sigma_{r-1}=&\{V_{r-1},V_r,\ldots,V_{r-2}\}\in\mathcal{I}[i_{r-1},\ldots,i_{r-1}-1],\\
            \sigma_r=&\{V_r,V_1,\ldots,V_{r-1}\}\in\mathcal{I}[i_{r},\ldots,i_r-1].
        \end{align*}
        For these $r$ elements $((i,j),\sigma)$, we have
        \begin{equation*}
            \xi((i_1-1,i_1),\sigma_1)=\xi((i_2-1,i_2),\sigma_2)=\cdots=\xi((i_{r}-1,i_{r}),\sigma_{r})=\pi.
        \end{equation*}
        Then, for $((i_1-1,i_1),\sigma_1)$ we have
        \begin{align*}
            (-1)^{\#(\sigma_1)-1}h_{\sigma_1}[A_1,\ldots,A_n]&h_{i_{1}-(i_1-1)-1}(A_{i_1},\ldots,A_{i_1-1})\\
            =&(-1)^{r-1}h_{\#(V_1)}(A_{i_1},\ldots,A_{i_2-1})\cdots h_{\#(V_r)}(A_{i_{r}},\ldots,A_{i_1-1})\\ 
            =&(-1)^{\#(\pi)-1}h_\pi[A_1,\ldots,A_n], 
        \end{align*}
        and the same result is obtained for $((i_{2}-1,i_2),\sigma_2),\ldots,((i_{r}-1,i_{r}),\sigma_{r})$. All of the above implies that, in this case, upon expanding the expression 
        \begin{equation*}
            \sum_{\stackbin[]{}{1\leq i<j\leq n}} \left[\sum_{\sigma\in \mathcal{I}[j,\ldots,i]}(-1)^{\#(\sigma)-1}h_{\sigma}[A_1,\ldots,A_n]\right]h_{j-i-1}(A_{i+1},\ldots,A_{j-1}),
        \end{equation*}
        the term $h_\pi[A_1,\ldots,A_n]$ appears a total of $2r-1$ times, of which $r-1$ occur with sign $(-1)^{\#(\pi)-2}$ and $r$ occur with sign $(-1)^{\#(\pi)-1}$. After simplification, we obtain
        \begin{equation*}
            (r-1)(-1)^{\#(\pi)-2}h_\pi[A_1,\ldots,A_n]+r(-1)^{\#(\pi)-1}h_\pi[A_1,\ldots,A_n]=(-1)^{\#(\pi)-1}h_\pi[A_1,\ldots,A_n].
        \end{equation*}
        
    \item \textbf{Case $1$ and $n$ in different blocks:} We consider $((i,j),\rho)\in D\times \mathcal{I}[j,\ldots,i]$, where the pair $(i,j)$ is chosen so that $i$ and $j$ are separated by exactly one block of $\pi$. Under this condition, we may consider the following
        \begin{align*}
            \rho_2=&\{V_3,V_4,\ldots,V_1\}\in\mathcal{I}[i_3,\ldots,i_2-1],\\
            \rho_3=&\{V_4,V_5,\ldots,V_2\}\in\mathcal{I}[i_4,\ldots,i_{3}-1],\\ 
            \vdots&\\ 
            \rho_{r-1}=&\{V_r,V_1,\ldots,V_{r-2}\}\in\mathcal{I}[i_{r},\ldots,i_{r-1}-1]. 
        \end{align*}
    Note that we do not consider $\rho_0=\{V_1,V_2,\ldots,V_{r-1}\}\in\mathcal{I}[1,\ldots,i_{r}-1]$ or $\rho_1=\{V_2,V_3,\ldots,V_{r}\}\in\mathcal{I}[i_2,\ldots,n]$, since in both cases we would obtain $j\leq i$, which is not valid. Graphically, this can be seen in Figure~\ref{fig:caso general fig2}. For the above $r-2$ elements $((i,j),\rho)$, one has
    \begin{equation*}
        \xi((i_2-1,i_3),\rho_2)= \xi((i_3-1,i_4),\rho_3)=\cdots=\xi((i_{r-1}-1,i_{r}),\rho_{r-1})=\pi.
    \end{equation*}
    Then, for $((i_2-1,i_3),\rho_2)$ we have
        \begin{align*}
            (-1)^{|\rho_2|-1}h_{\rho_2}[A_1,\ldots,A_n]&h_{i_{3}-(i_2-1)-1}(A_{i_2},\ldots,A_{i_3-1})
            \\=&(-1)^{r-2}h_{|V_1|}(A_{i_1},\ldots,A_{i_2-1})\cdots h_{|V_r|}(A_{i_{r}},\ldots,A_{i_1-1})\\ 
            =&(-1)^{|\pi|-2}h_\pi[A_1,\ldots,A_n], 
        \end{align*}
        and the same result is obtained for each $((i_{3}-1,i_4),\rho_3),\ldots,((i_{r-1}-1,i_r),\rho_{r-1})$.
        
        On the other hand, by considering $((i,j),\sigma)\in D\times \mathcal{I}[j,\ldots,i]$, where the pair $(i,j)$ is such that they are consecutive, we can consider 
        \begin{align*}
            \sigma_2=&\{V_2,V_3,\ldots,V_1\}\in\mathcal{I}[i_2,\ldots,i_2-1],\\ 
            \sigma_3=&\{V_3,V_4,\ldots,V_2\}\in\mathcal{I}[i_3,\ldots,i_3-1],\\ 
            \vdots&\\ 
            \sigma_{r-1}=&\{V_{r-1},V_r,\ldots,V_{r-2}\}\in\mathcal{I}[i_{r-1},\ldots,i_{r-1}-1],\\
            \sigma_r=&\{V_r,V_1,\ldots,V_{r-1}\}\in\mathcal{I}[i_{r},\ldots,i_r-1],
        \end{align*}
        observe that in this case we do not consider $\sigma_1=\{V_1,V_2,\ldots,V_r\}\in \mathcal{I}[1,\ldots,n]$, since in this case one would obtain $n\leq 1$, which is false. Graphically, this can be seen in Figure~\ref{fig:caso general fig2}. For the above $r-1$ elements $((i,j),\sigma)$, one has
        \begin{equation*}
            \xi((i_2-1,i_2),\sigma_2)=\xi((i_3-1,i_3),\sigma_3)=\cdots=\xi((i_{r}-1,i_{r}),\sigma_{r})=\pi.
        \end{equation*}
        Then, for $((i_2-1,i_2)$ we have
        \begin{align*}
            (-1)^{\#(\sigma_2)-1}h_{\sigma_2}[A_1,\ldots,A_n]&h_{i_{1}-(i_1-1)-1}(A_{i_2},\ldots,A_{i_2-1})\\
            =&(-1)^{r-1}h_{\#(V_1)}(A_{i_1},\ldots,A_{i_2-1})\cdots h_{\#(V_r)}(A_{i_{r}},\ldots,A_{i_1-1})\\ 
            =&(-1)^{\#(\pi)-1}h_\pi[A_1,\ldots,A_n], 
        \end{align*}
         and the same result is obtained for each $((i_{3}-1,i_3),\sigma_3),\ldots,((i_{r}-1,i_{r}),\sigma_{r})$. Therefore, all of the above implies that, in this case, upon expanding the expression 
         \begin{equation*}
            \sum_{\stackbin[]{}{1\leq i<j\leq n}} \left[\sum_{\sigma\in \mathcal{I}[j,\ldots,i]}(-1)^{\#(\sigma)-1}h_{\sigma}[A_1,\ldots,A_n]\right]h_{j-i-1}(A_{i+1},\ldots,A_{j-1}),
        \end{equation*}
        the term $h_\pi[A_1,\ldots,A_n]$ appears a total of $2r-3$ times, of which $r-2$ occur with sign $(-1)^{\#(\pi)-2}$ and $r-1$ occur with sign $(-1)^{\#(\pi)-1}$. After simplification, we obtain
        \begin{equation*}
            (r-2)(-1)^{\#(\pi)-2}h_\pi[A_1,\ldots,A_n]+(r-1)(-1)^{\#(\pi)-1}h_\pi[A_1,\ldots,A_n]=(-1)^{\#(\pi)-1}h_\pi[A_1,\ldots,A_n].
        \end{equation*}
    
\end{itemize}
In both cases, whether $1$ and $n$ belong to the same block or to different blocks, we conclude that for $\pi\in\mathcal{CI}(n)\backslash\{1,2,\ldots,n\}$, when expanding the expression 
\begin{equation*}
    \sum_{\stackbin[]{}{1\leq i<j\leq n}} \left[\sum_{\sigma\in \mathcal{I}[j,\ldots,i]}(-1)^{\#(\sigma)-1}h_{\sigma}[A_1,\ldots,A_n]\right]h_{j-i-1}(A_{i+1},\ldots,A_{j-1}),
\end{equation*} 
the term $(-1)^{\#(\pi)-1}h_\pi(A_1,\ldots,A_n)$ appears. This allows us to obtain the following equality
\begin{equation*}
    \sum_{\stackbin[]{}{1\leq i<j\leq n}} \left[\sum_{\sigma\in \mathcal{I}[j,\ldots,i]}(-1)^{\#(\sigma)-1}h_{\sigma}[A_1,\ldots,A_n]\right]h_{j-i-1}(A_{i+1},\ldots,A_{j-1})= \sum_{\stackbin[\pi\neq\{1,2,\ldots,n\}]{}{\pi\in \mathcal{CI}(n)}}h_\pi[A_1,\ldots,A_n](-1)^{\#(\pi)-1}. 
\end{equation*}
Finally, this concludes the proof
\begin{align*}
    p_n(A_1,\ldots,A_n)=&h_n(A_1,\ldots,A_n)+ \sum_{\stackbin[\pi\neq\{1,2,\ldots,n\}]{}{\pi\in \mathcal{CI}(n)}}h_\pi[A_1,\ldots,A_n](-1)^{\#(\pi)-1}\\
    =& \sum_{\stackbin[]{}{\pi\in \mathcal{CI}(n)}}h_\pi[A_1,\ldots,A_n](-1)^{\#(\pi)-1}.
\end{align*}
\end{proof}
\begin{remark}[Relation with infinitesimal freeness and Boolean-like cumulants]
If we impose traciality on the functional $h$, then for every $\{1_{n,j}\}=\{(j,j+1,\ldots,j-1)\}\in \mathcal{CI}(n)$ we have
\[
h_{1_{n,j}}[A_1,\ldots,A_n] = h_n(A_1,\ldots,A_n), \qquad j=2,3,\ldots,n.
\]
Consequently, following the Remark \ref{remark-cyclic-partitions}, Theorem~\ref{moment-moment} reduces to
\begin{equation}\label{moment-moment-tracial}
    p_n(A_1,\ldots,A_n)
    =(n-1)h_n(A_1,\ldots,A_n)
    +\sum_{\pi\in \widehat{\mathcal{CI}}(n)}(-1)^{\#(\pi)-1}h_\pi[A_1,\ldots,A_n].
\end{equation}
Somewhat surprisingly, this expression is closely related to the Boolean-like cumulants introduced by J. Mingo and P.-L. Tseng in the context of infinitesimal probability \cite{mingo2024infinitesimal}. If the reader would like to know more about infinitesimal probability we recommend \cite{belinschi2012free,fevrier2010infinitesimal,
shlyakhtenko2018free,
mingo2019non}. In this particular work, the authors consider an infinitesimal probability space $(\mathcal{A},\varphi,\varphi^\prime)$, an infinitesimal idempotent element $j\in\mathcal{A}$ ($j=j^2$) with $\varphi^\prime(j)\neq 0$, and a unital subalgebra $\mathcal{B}\subset\mathcal{A}$ such that $\{j\}$ and $\mathcal{B}$ are infinitesimally free. The authors define Boolean-like cumulants using $\widehat{\mathcal{CI}}(n)$ by
\begin{equation}\label{boolean-like-definition}
    \widetilde{b}_n(A_1,\ldots,A_n)
    :=(n-1)\varphi(A_1\cdots A_n)
    +\sum_{\pi\in\widehat{\mathcal{CI}}(n)}(-1)^{\#(\pi)-1}\varphi_\pi[A_1,\ldots,A_n].
\end{equation}
where $A_1,\ldots,A_n\in \mathcal{B}$. Comparing \eqref{moment-moment-tracial} and \eqref{boolean-like-definition}, it follows that $\widetilde{b}_n = p_n$ once $\widetilde{b}_n$ is identified with the moments of the inverse Markov--Krein transform and $\varphi$ with the moments of the Markov--Krein transform. In fact, under the same assumptions, Proposition~5.3 in \cite{mingo2024infinitesimal} shows that
\begin{equation}\label{boolean-like}
\varphi^\prime(j)\widetilde{b}_n(A_1,\dots,A_n)
=\varphi^\prime(A_1\cdots A_n)
-\varphi^\prime\big((1-j) A_1 (1-j) A_2 (1-j) \cdots (1-j) A_n\big).
\end{equation}
In a concrete setting, let $\mathcal{H}$ be a separable Hilbert space, $\mathcal{A}\subset B(\mathcal{H})$, and $\xi\in\mathcal{H}$ a unit vector. Then, in analogy with the above framework, one may consider the identifications 
\begin{align*}
    & j \longleftrightarrow p, \\
    & 1-j \longleftrightarrow q, \\
    & \varphi(\cdot) \longleftrightarrow \langle \cdot\,\xi,\xi\rangle, \\
    & \varphi^\prime(\cdot) \longleftrightarrow \operatorname{Tr}(\cdot),
\end{align*}
where $p$ is the rank-one projection onto $\mathrm{span}\{\xi\}$ and $q=1-p$. In this case, $\varphi^\prime(j)=\operatorname{Tr}(p)=1$, and \eqref{boolean-like} yields
\[
\widetilde{b}_n(A_1,\ldots,A_n)
=\operatorname{Tr}(A_1\cdots A_n)
-\operatorname{Tr}(qA_1 qA_2 q \cdots qA_n).
\]
Which coincide with the multivariate moments of the inverse Markov--Krein transform. Therefore $\widetilde{b}_n=p_n$. On the other hand, in the one-dimensional case, let $(\mathcal{A},\varphi,\varphi^\prime)$ be an infinitesimal probability space and let $a,p\in\mathcal{A}$ be infinitesimally free, with $p$ an infinitesimal idempotent and $q=1-p$. Define
\[
G_\tau(z)=\sum_{n=0}^\infty \frac{p_n}{z^{n+1}}, 
\qquad
G_\mu(z)=\sum_{n=0}^\infty \frac{h_n}{z^{n+1}},
\]
where $p_n=\varphi^\prime(a^n)-\varphi^\prime((qaq)^n)$ and $h_n=\varphi(a^n)$. Then the relation
\(G_\mu^\prime(z)+G_\tau(z)G_\mu(z)=0
\)
holds.
\end{remark}
By the previous theorem, the first multivariate moments $p_n(A_1,\ldots,A_n)$ can be expressed in terms of the functionals $h_n$ as follows:
\begin{align*}
    p_1(\mathcolor{blue}{A})
    &=
    h_1(\mathcolor{blue}{A}),
    \\
    p_2(\mathcolor{blue}{A},\mathcolor{orange}{B})
    &=
    h_2(\mathcolor{blue}{A},\mathcolor{orange}{B})
    +
    h_2(\mathcolor{orange}{B},\mathcolor{blue}{A})
    -
    h_1(\mathcolor{blue}{A})h_1(\mathcolor{orange}{B}),
    \\
    p_3(\mathcolor{blue}{A},\mathcolor{orange}{B},\mathcolor{LimeGreen}{C})
    &=
    h_3(\mathcolor{blue}{A},\mathcolor{orange}{B},\mathcolor{LimeGreen}{C})
    +
    h_3(\mathcolor{orange}{B},\mathcolor{LimeGreen}{C},\mathcolor{blue}{A})
    +
    h_3(\mathcolor{LimeGreen}{C},\mathcolor{blue}{A},\mathcolor{orange}{B}) -
    h_1(\mathcolor{blue}{A})h_2(\mathcolor{orange}{B},\mathcolor{LimeGreen}{C})
    \\
    &\quad
    -
    h_1(\mathcolor{orange}{B})h_2(\mathcolor{LimeGreen}{C},\mathcolor{blue}{A})
    -
    h_1(\mathcolor{LimeGreen}{C})h_2(\mathcolor{blue}{A},\mathcolor{orange}{B})+
    h_1(\mathcolor{blue}{A})
    h_1(\mathcolor{orange}{B})
    h_1(\mathcolor{LimeGreen}{C}),
    \\
    p_4(\mathcolor{blue}{A},\mathcolor{orange}{B},\mathcolor{LimeGreen}{C},D)
    &=
    h_4(\mathcolor{blue}{A},\mathcolor{orange}{B},\mathcolor{LimeGreen}{C},D)
    +
    h_4(\mathcolor{orange}{B},\mathcolor{LimeGreen}{C},D,\mathcolor{blue}{A})
    +
    h_4(\mathcolor{LimeGreen}{C},D,\mathcolor{blue}{A},\mathcolor{orange}{B})
    +
    h_4(D,\mathcolor{blue}{A},\mathcolor{orange}{B},\mathcolor{LimeGreen}{C})
    \\
    &\quad
    -
    h_1(\mathcolor{blue}{A})h_3(\mathcolor{orange}{B},\mathcolor{LimeGreen}{C},D)
    -
    h_1(\mathcolor{orange}{B})h_3(\mathcolor{LimeGreen}{C},D,\mathcolor{blue}{A})-
    h_1(\mathcolor{LimeGreen}{C})h_3(D,\mathcolor{blue}{A},\mathcolor{orange}{B})
    \\
    &\quad
    -
    h_1(D)h_3(\mathcolor{blue}{A},\mathcolor{orange}{B},\mathcolor{LimeGreen}{C})-
    h_2(\mathcolor{blue}{A},\mathcolor{orange}{B})
    h_2(\mathcolor{LimeGreen}{C},D)
    -
    h_2(\mathcolor{orange}{B},\mathcolor{LimeGreen}{C})
    h_2(D,\mathcolor{blue}{A})
    \\
    &\quad
    +
    h_1(\mathcolor{blue}{A})h_1(\mathcolor{orange}{B})h_2(\mathcolor{LimeGreen}{C},D)
    +
    h_1(\mathcolor{blue}{A})h_1(D)h_2(\mathcolor{orange}{B},\mathcolor{LimeGreen}{C})
    \\
    &\quad
    +
    h_1(\mathcolor{LimeGreen}{C})h_1(D)h_2(\mathcolor{blue}{A},\mathcolor{orange}{B})
    +
    h_1(\mathcolor{orange}{B})h_1(\mathcolor{LimeGreen}{C})h_2(D,\mathcolor{blue}{A})
    \\
    &\quad
    -
    h_1(\mathcolor{blue}{A})
    h_1(\mathcolor{orange}{B})
    h_1(\mathcolor{LimeGreen}{C})
    h_1(D).
\end{align*}
\begin{remark}
Inverting the formula from Theorem~\ref{moment-moment} is, in principle, not possible, since the number of terms appearing in \(p_n\) and \(h_n\) is different. This issue is resolved under the additional assumption that \(h\) is tracial. In this case, the formula from Theorem~\ref{moment-moment} can be inverted; however, no explicit closed formula is currently known.
\end{remark}
\begin{corollary}
Let $A_1, \dots, A_n \in \mathcal{A}$ be random variables. If  $h$ is tracial, the multivariate moments $h_n(A_1, \dots, A_n)$ can be represented by
\begin{equation*}
    h_n(A_1, \dots, A_n) = \sum_{\pi \in \mathcal{NC}(n)} c(\pi) p_\pi[A_1, \dots, A_n],    
\end{equation*}
where $ c: \mathcal{NC}(n) \rightarrow \mathbb{C} $.
\end{corollary}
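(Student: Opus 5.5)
The plan is to derive the corollary from the tracial form \eqref{moment-moment-tracial} of Theorem~\ref{moment-moment}, organized around Boolean cumulants. With $h$ tracial, the Boolean cumulants $b^h$ are not cyclically invariant in general, so I will work with the free cumulants $r^h$ of $h$ instead. For a tracial functional, the relation \eqref{momentos_tau_cumulantes_booleanos_mu} of Lemma~\ref{lema-mom-cum-booleanos} can be rewritten by substituting $b^h_m=\sum_{\sigma\in\mathcal{NC}_{irr}(m)} r^h_\sigma$, which is the multivariate version of \eqref{43}. This gives
\begin{equation*}
p_n(A_1,\ldots,A_n)=\sum_{\pi\in\mathcal{NC}(n)} \lambda(\pi)\, r^h_\pi[A_1,\ldots,A_n]
\end{equation*}
with an explicit nonnegative integer weight $\lambda(\pi)$. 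Here $\lambda(\pi)$ counts the cyclic configurations of the points $i(1)<\cdots<i(l)$ compatible with $\pi$.

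Traciality is exactly what makes $r^h_\pi$ depend only on the cyclic structure of $\pi$, so these terms combine. In the univariate specialization this should reproduce \eqref{p_n-r_n univariate}, where the weight is $n-1-\#(\pi)$ in the paper's normalization. I will check the weight against that formula. The key structural fact I need is that $\lambda(1_n)=n\neq 0$, since the one-block term arises from each of the $n$ single-cut choices $l=1$. More generally, the relation is triangular with respect to the refinement order on $\mathcal{NC}(n)$:
\begin{equation*}
p_n=\lambda(1_n)\, r^h_n+\sum_{\pi<1_n}\lambda(\pi)\, r^h_\pi .
\end{equation*}
Here each $r^h_\pi$ with $\pi<1_n$ is a product of lower-order cumulants.

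Next I will invert this triangular system by induction on $n$. Extending multiplicatively gives $p_\sigma=\sum_{\pi\le\sigma,\ \pi\in\mathcal{NC}(n)}\lambda_\sigma(\pi)\,r^h_\pi$, where $\lambda_\sigma$ is the product of the block weights. Since intervals in $\mathcal{NC}(n)$ factor as products of smaller lattices, this defines an upper-triangular incidence-algebra element $\Lambda$ on $\mathcal{NC}(n)$ with nonzero diagonal. Its inverse $\Lambda^{-1}$ then exists in the incidence algebra, and it gives $r^h_\pi=\sum_{\sigma\le\pi}\Lambda^{-1}(\sigma,\pi)\,p_\sigma$. Finally, I substitute into the free moment--cumulant formula $h_n=\sum_{\pi\in\mathcal{NC}(n)} r^h_\pi$. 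The result is
\begin{equation*}
h_n=\sum_{\sigma\in\mathcal{NC}(n)}\Bigl(\sum_{\pi\ge\sigma}\Lambda^{-1}(\sigma,\pi)\Bigr)p_\sigma ,
\end{equation*}
so $c(\sigma)=\sum_{\pi\ge\sigma}\Lambda^{-1}(\sigma,\pi)$.

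The main obstacle is the first step: verifying that, under traciality, the expansion of $p_n$ genuinely collapses onto $\mathcal{NC}(n)$ with a well-defined weight on each $\pi$. The difficulty is that the cyclic wraparound blocks must be identified with non-crossing partitions of $[n]$ via rotation, and this is only legitimate when $r^h$ is cyclically invariant. I would first establish that cyclic invariance from traciality of $h$, using the Möbius formula for $r^h$ and the rotation invariance of $\mathcal{NC}(n)$. Only after that would I assign the weights. Nonvanishing of the diagonal weight then guarantees the inversion; the corollary asserts only the existence of $c$, so no closed form is needed.
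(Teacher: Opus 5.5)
Your argument is correct, but it takes a different route from the paper. The paper's proof is a direct induction on the tracial form \eqref{moment-moment-tracial} of Theorem~\ref{moment-moment}. Traciality collapses the $n$ rotated one-block terms into $n\,h_n$, so
$n\,h_n=p_n-\sum_{\pi\in\widehat{\mathcal{CI}}(n)\setminus\{1_n\}}(-1)^{\#(\pi)-1}h_\pi$.
Every block of such a $\pi$ is a cyclic interval of size $<n$, so you substitute the inductive hypothesis blockwise. Traciality again lets wraparound blocks be read in increasing order, and $p$ is always cyclically invariant. Refining arcs of the circle by non-crossing partitions gives a non-crossing partition, so everything lands on $\mathcal{NC}(n)$. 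No cumulants are needed.

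You instead pass through free cumulants. Your first step is, up to conventions, Theorem~\ref{main-thm-cumulanteslibres}, which you could cite rather than re-derive. Cyclic invariance of $r^h$ under traciality is exactly what makes that theorem's wraparound blocks agree with the standard $r_\pi$. You then invert a triangular element $\Lambda$ of the incidence algebra of $\mathcal{NC}(n)$ and compose with $h_n=\sum_{\pi\in\mathcal{NC}(n)}r_\pi$. This costs more machinery: the multivariate \eqref{43} and cyclic invariance of free cumulants. In return it describes $c$ as an incidence-algebra inverse rather than a recursion: $c(\sigma)=\sum_{\pi\ge\sigma}\Lambda^{-1}(\sigma,\pi)$, with the explicit kernel $\Lambda(\pi,\sigma)=\prod_{V\in\sigma}\bigl(|V|+1-\#(\pi|_V)\bigr)$. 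Both routes give $c(1_n)=1/n$, and for $n=2$ both give $h_2=\tfrac12 p_2+\tfrac12 p_1^2$.

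One correction to your plan. The weight $n-1-\#(\pi)$ you quote from the introduction's formula \eqref{p_n-r_n univariate} is a typo there. The correct weight, from Theorem~\ref{main-thm-cumulanteslibres}, is $n+1-\#(\pi)=\#(Kr(\pi))$, which is the one consistent with your $\lambda(1_n)=n$. Already $p_1=h_1=r_1$ rules out $n-1-\#(\pi)$. That weight would also give $\lambda(1_2)=0$, killing the diagonal of $\Lambda$ and your inversion. So check against the theorem, not the introduction.

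Also, the factorization of intervals of $\mathcal{NC}(n)$ plays no role in invertibility. Triangularity with nonzero diagonal $\Lambda(\sigma,\sigma)=\prod_{V\in\sigma}|V|$ is all you use. Note that $\Lambda$ is not multiplicative in Speicher's sense, since $\#(Kr(\cdot))$ counts the factors rather than multiplying over them.
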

\begin{dem}
The result follows by a straightforward induction argument.    
\end{dem}

Since the multifunctionals $p_n$ and $h_n$ extend the univariate case, Theorem \ref{moment-moment} can be written in the univariate case as
\begin{equation}\label{momentos-momentos-univariados}
p_n
=
\sum_{\pi \in \mathcal{CI}(n)}
(-1)^{\#(\pi)-1}
h_\pi.
\end{equation}
This identity provides a new formula in the univariate setting, expressing the moments of the inverse Markov--Krein transform, $p_n$, in terms of the moments of the Markov--Krein transform, $h_n$. In particular, the first moments can be written as
\begin{equation*}
\begin{aligned}
p_1 &= h_1,
\\
p_2 &= 2h_2 - h_1^2,
\\
p_3 &= 3h_3 - 3h_2 h_1 + h_1^3,
\\
p_4 &= 4h_4 - 4h_3 h_1 - 2h_2^2 + 4h_2 h_1^2 - h_1^4.
\end{aligned}
\end{equation*}
\begin{corollary}\label{p_n en relación con h_n}
    In the one-dimensional Markov-Krein correspondence the following are equivalents
    \begin{enumerate}
        \item[(1) ] $\displaystyle p_n=\sum_{\pi\in\mathcal{P}(n)}(-1)^{\#(\pi)-1}\frac{\#(\pi)!(\#(\pi)-1)!}{(n-1)!}h_\pi$.
        \item[(2) ] $\displaystyle  p_n=\sum_{\pi\in\mathcal{CI}(n)}(-1)^{\#(\pi)-1}h_\pi$.
    \end{enumerate}
\end{corollary}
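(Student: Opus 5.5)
Both expressions have already been identified with $p_n$, so the natural route is to prove that each of (1) and (2) is equivalent to the generating function identity \eqref{8}. Condition (1) is Proposition~\ref{p_n-h_n-particiones}, which was derived from \eqref{8} by a Möbius inversion on $\mathcal{P}(n)$. Conversely, if (1) holds for all $n$, then it determines $p_n$ recursively from $h_1,\dots,h_n$. Each step of the proof of Proposition~\ref{p_n-h_n-particiones} is reversible: Möbius inversion on $\mathcal{P}(n)$ is a bijection, and the exponential formula $\log\sum \widetilde h_n z^n/n! = \sum \widetilde p_n z^n/n!$ is equivalent to $\widetilde h_n=\sum_{\pi\in\mathcal{P}(n)}\widetilde p_\pi$. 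Hence (1) is equivalent to \eqref{8}. It therefore remains to show that (2) is also equivalent to \eqref{8}, or more directly that the two right-hand sides coincide as polynomials in $h_1,h_2,\dots$.

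For (2), I would compute the univariate sum $\sum_{\pi\in\mathcal{CI}(n)}(-1)^{\#(\pi)-1}h_\pi$ through the cut description of $\mathcal{CI}(n)$. The $n$ partitions with one block (the interval $1_n$ and the cyclic orderings $1_{n,j}$) each contribute $h_n$, for a total of $nh_n$. A configuration of $r\ge 2$ cuts on the circle gives blocks of sizes $m_1,\dots,m_r$ read cyclically. The composition $(m_1,\dots,m_r)$ of $n$ together with the position of the first cut determines the configuration, but each configuration is obtained $r$ times this way, once for each choice of which cut is called the first. So the number of configurations with gap sequence $(m_1,\dots,m_r)$, counted over the $n$ possible starting positions, is $n/r$ times the number of compositions. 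This gives
\[
\sum_{\pi\in\mathcal{CI}(n)}(-1)^{\#(\pi)-1}h_\pi=\sum_{r\ge1}\frac{n}{r}(-1)^{r-1}\sum_{m_1+\cdots+m_r=n} h_{m_1}\cdots h_{m_r},
\]
and the $r=1$ term correctly equals $nh_n$. The main point to check is this weight $n/r$ and the fact that the $n$ one-block ordered partitions exactly match the $r=1$ case. That step is the only real obstacle, and I would verify it against the displayed values of $p_2,p_3,p_4$; for example, for $n=4$ the term $h_2^2$ appears with coefficient $-\frac{4}{2}=-2$.

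Now write $H(z)=\sum_{m\ge1}h_mz^m$, with $h_0=1$. The inner sum over compositions is the coefficient $[z^n]H(z)^r$, so the right-hand side of (2) equals $n[z^n]\sum_r (-1)^{r-1}H^r/r = n[z^n]\log(1+H(z))$. Thus (2) for all $n$ is equivalent to $\sum_n p_nz^n/n=\log\sum_n h_nz^n$, which is exactly \eqref{8}. Both statements are therefore equivalent to \eqref{8}, and hence to each other. This is consistent with Theorem~\ref{moment-moment}, which yields (2) directly in the univariate specialization.
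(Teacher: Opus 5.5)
\textbf{The gap is in your treatment of (1).} You take Proposition~\ref{p_n-h_n-particiones} at face value and argue that each step of its proof is reversible. Its final step (``substituting the definitions of $\widetilde h_n$ and $\widetilde p_n$'') is wrong. Since $\widetilde h_m=m!\,h_m$, one has $\widetilde h_\pi=\bigl(\prod_{V\in\pi}|V|!\bigr)h_\pi$, not $\#(\pi)!\,h_\pi$. Möbius inversion of \eqref{8} actually gives
\[
p_n=\sum_{\pi\in\mathcal{P}(n)}(-1)^{\#(\pi)-1}\,\frac{(\#(\pi)-1)!\,\prod_{V\in\pi}|V|!}{(n-1)!}\,h_\pi .
\]
With the coefficient as printed, (1) yields $p_2=h_2-2h_1^2$ and $p_3=\tfrac12 h_3-3h_2h_1+6h_1^3$. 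By contrast, \eqref{8} and (2) give $p_2=2h_2-h_1^2$ and $p_3=3h_3-3h_2h_1+h_1^3$. Concretely, the arcsine/Bernoulli pair of Example~\ref{ejemplo-1} has $h_1=0$, $h_2=2$, $p_2=4$, but the printed (1) returns $2$.

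So your step ``(1) is equivalent to \eqref{8}'' fails. Item (1) as stated, which carries the same slip as Proposition~\ref{p_n-h_n-particiones}, is not equivalent to (2), and the statement as written cannot be proved. You tested (2) against $p_2,p_3,p_4$; the same test applied to (1) would have caught this. With the corrected coefficient, your reversibility argument is valid and the equivalence follows.

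\textbf{Your argument for (2) is correct and takes a genuinely different route from the paper.} The paper gives no separate proof of the corollary. It gets (2) only by specializing Theorem~\ref{moment-moment} to $A_1=\cdots=A_n=a$. That goes implicitly through Krein's realization $h_n=\langle a^n\xi,\xi\rangle$, $p_n=\operatorname{Tr}(a^n-(qaq)^n)$ with a bounded operator $a$, plus the case analysis in that proof.

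You instead double count pairs (cut configuration, distinguished cut) against pairs (composition, starting point), which produces the weight $n/r$. Combined with $\sum_{r\ge1}(-1)^{r-1}H^r/r=\log(1+H)$, this shows directly that (2) is just the coefficientwise form of \eqref{8}. This makes (2) valid for every Markov--Krein pair with finite moments, without any operator model, and gives an independent check of the univariate case of Theorem~\ref{moment-moment}. What it does not give is the multivariate statement, which is the real content of the paper's route.
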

In view of \eqref{momentos-momentos-univariados} and Examples \ref{ejemplo-1} and \ref{ejemplo-2}, it follows that
\begin{equation*}
    m_n^{\mathrm{bern}}
    =
    \sum_{\pi \in \mathcal{CI}(n)}
    (-1)^{\#(\pi)-1}
    m^{\mathrm{arc}}_\pi
    \qquad \text{and} \qquad
    m_n^{\mathrm{arc}}
    =
    \sum_{\pi \in \mathcal{CI}(n)}
    (-1)^{\#(\pi)-1}
    m^{\mathrm{sc}}_\pi,
\end{equation*}
where \( m_n^{\mathrm{bern}}, m_n^{\mathrm{arc}}, \) and \( m_n^{\mathrm{sc}} \) denote the \(n\)-th moments of the Bernoulli\((-2,2)\), arcsine\((-2,2)\), and semicircular\((-2,2)\) distributions, respectively. Substituting the second identity into the first, we obtain
\begin{align*}
    m_n^{\mathrm{bern}}
    = \sum_{\pi \in \mathcal{CI}(n)}
    (-1)^{\#(\pi)-1}
    m^{\mathrm{arc}}_\pi =& \sum_{\pi \in \mathcal{CI}(n)}
    (-1)^{\#(\pi)-1}
    \prod_{V\in \pi} m_{|V|}^{\mathrm{arc}} \\
    =& \sum_{\pi \in \mathcal{CI}(n)}
    (-1)^{\#(\pi)-1}
    \prod_{V\in \pi}
    \left[
        \sum_{\sigma_V\in \mathcal{CI}(|V|)}
        (-1)^{\#(\sigma_V)-1}
        m^{\mathrm{sc}}_{\sigma_V}
    \right].
\end{align*}
In particular, this provides an explicit formula expressing the moments of the Bernoulli\((-2,2)\) distribution in terms of those of the semicircular\((-2,2)\) distribution:
\begin{equation*}
    m_n^{\mathrm{bern}}
    =-
    \sum_{\pi \in \mathcal{CI}(n)}
    \prod_{V\in \pi}
    \left[
        \sum_{\sigma_V\in \mathcal{CI}(|V|)} (-1)^{\#(\sigma_V)}
        m^{\mathrm{sc}}_{\sigma_V}
    \right].
\end{equation*}
More generally, whenever the Markov--Krein transform can be iterated twice, that is, when the Rayleigh measure \(\text{MK}^{-1}(\mu)\) is a probability measure, there exists a unique Rayleigh measure \(\text{MK}^{-1}(\text{MK}^{-1}(\mu))\) such that
\begin{equation*}
    p_n^{\text{MK}^{(-1)}(\text{MK}^{(-1)}(\mu))}
    =-
    \sum_{\pi \in \mathcal{CI}(n)}
    \prod_{V\in \pi}
    \left[
        \sum_{\sigma_V\in \mathcal{CI}(|V|)} (-1)^{\#(\sigma_V)}
        h^{\text{MK}(\tau)}_{\sigma_V}
    \right],
\end{equation*}
where
\begin{equation*} 
    G_{\text{MK}^{(-1)}(\text{MK}^{(-1)}(\mu))}(z)=\sum_{n=0}^\infty\frac{1}{z^{n+1}}p_n^{\text{MK}^{(-1)}(\text{MK}^{(-1)}(\mu))}
    \qquad \text{and}\qquad G_\mu(z)=\sum_{n=0}^\infty\frac{1}{z^{n+1}}h^{\text{MK}(\tau)}_n,
\end{equation*}
are the Cauchy transforms satisfying the following identities 
\begin{equation}\label{ecs-cauchy}
G_{\text{MK}^{(-1)}}(z)=-\frac{G_\mu^{\prime}(z)}{G_\mu(z)},\qquad 
    G_{\text{MK}^{(-1)}(\text{MK}^{(-1)}(\mu))}(z)=\frac{G_\mu^\prime(z)}{G_\mu(z)}-\frac{G_\mu^{\prime\prime}(z)}{G_\mu^\prime(z)}.
\end{equation}
\begin{remark}[Connection with finite free probability and infinitesimal distributions]
Another somewhat surprising connection of the Markov--Krein transform arises in finite free probability and limit for roots of polynomials.
We briefly recall their definition and refer the reader to \cite{arizmendi2023finite,arizmendi2018cumulants,arizmendi2025finite,marcus2022finite}. Let
$
p_d(x)=\sum_{i=0}^d x^{d-i}(-1)^i a_i^{p_d},
$
be a monic polynomial of degree $d$ with real roots and consider its empirical root distribution
\[
\mu_{p_d}=\frac{1}{d}\sum_{i=1}^d \delta_{\lambda_i(p_d)},
\]
where $\lambda_1(p_d),\ldots,\lambda_d(p_d)$ are the roots of $p_d$.  The $n$-th moment of $p_d$ is defined as the $n$-th moment of $\mu_{p_d}$, namely
\[
m_n(p_d)=\frac{1}{d}\sum_{i=1}^d \lambda_i^n(p_d).
\]
The finite free cumulants of $p_d$ were introduced in \cite{arizmendi2018cumulants} and are the sequence $(k_n^{(d)}(p_d))_{n=1}^d$, defined in terms of the coefficients as follows, 
\[
k_n^{(d)}(p_d)=\frac{(-d)^{n-1}}{(n-1)!}\sum_{\pi\in\mathcal{P}(n)} \widetilde{a}_\pi^{p_d}\, \mu(0_n,\pi),
\]
where $\widetilde{a}_i^{p_d}=\binom{d}{i}^{-1} a_i^{p_d}$. In the large $d$ limit,  these cumulants satisfy that $ k_n^{(d)}(p_d) \to r_n(\mu),$ 
exactly when $ m_n(p_d) \to m_n(\mu).$ 

In the same framework, we consider a sequence of real-rooted polynomials $(p_d)_{d=1}^\infty$ with asymptotic root distribution $\mu$. Let $m_n(\mu)=\int t^n\,\mu(dt)$ and define
\[
m_n^\prime=\lim_{d\to\infty} \big(m_n(p_d)-m_n(\mu)\big),
\]
whenever the limit exists. Denoting by $\mu^\prime$ the signed measure with moments $(m_n^\prime)_{n\geq 1}$, we refer to $\mu^\prime$ as the infinitesimal distribution associated with $(p_d)_{d=1}^\infty$, and to the pair $(\mu,\mu^\prime)$ as its infinitesimal asymptotic distribution. 
\begin{theorem}[Theorem 1.6 in \cite{arizmendi2023finite}]
Let $\mu$ be a compactly supported probability measure on $\mathbb{R}$. Suppose there exists a sequence of monic real-rooted polynomials $(p_d)_{d=1}^\infty$, where $p_d$ has degree $d$, such that $k_n^d(p_d)=r_n(\mu)$ for all $n \leq d$. Then $\mu_{p_d}$ admits an infinitesimal asymptotic distribution $(\mu,\mu^\prime)$, whose infinitesimal Cauchy transform is given by
\[
G_{\mu^\prime}(z)
= \frac{G_\mu^{\prime\prime}(z)}{2G_\mu^\prime(z)}
- \frac{G_\mu^\prime(z)}{G_\mu(z)}.
\]
\end{theorem}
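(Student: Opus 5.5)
The plan is to make $p_d$ explicit, write $G_{\mu_{p_d}}$ as a logarithmic derivative, and expand it to order $1/d$ by a saddle-point (Laplace) analysis. I also make the sign convention for the finite $R$-transform part of the plan: it is what makes the leading term reproduce $G_\mu$.

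\textbf{Step 1: coefficients from cumulants.} Set $\kappa_n:=\frac{(n-1)!}{(-d)^{n-1}}k_n^{(d)}(p_d)$. By the defining relation, $\kappa_n$ are the classical cumulants of the exponential sequence $(\widetilde a_i^{p_d})_i$. Since $k_n^{(d)}(p_d)=r_n(\mu)$, I expect
\[
\sum_{i\ge0}\widetilde a_i^{p_d}\frac{w^i}{i!}=\exp\Big(\sum_{n\ge1}\frac{r_n(\mu)}{n(-d)^{n-1}}w^n\Big)=:E_d(w),
\]
truncated at degree $d$. The truncation is harmless, because $\binom{d}{i}$ kills the higher terms below. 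Using $\binom{d}{i}i!=\frac{d!}{(d-i)!}$ and $\partial_x^i x^d=\frac{d!}{(d-i)!}x^{d-i}$, this gives the operator form
\[
p_d(x)=E_d(-\partial_x)\,x^d .
\]
The same computation shows
\[
p_d(x)=\frac{d!}{2\pi i}\oint E_d(-s)\,e^{sx}s^{-d-1}\,ds .
\]
\emph{Sign convention.} With $s=dt$ one gets $\log E_d(-dt)=d\sum_n r_n t^n/n$, independently of the parity of $n$. The exponent in Step 2 therefore involves the primitive of the $R$-transform $\mathcal R_\mu(t)=\sum_{n\ge1}r_n t^{n-1}$, with no sign flips.

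\textbf{Step 2: saddle point.} Substituting $s=dt$ gives
\[
p_d(x)=\frac{d!}{d^d}\frac{1}{2\pi i}\oint e^{d\,\Phi(t,x)}\frac{dt}{t},\qquad \Phi(t,x)=\sum_{n\ge1}\frac{r_n}{n}t^n+tx-\log t .
\]
The critical point equation $\partial_t\Phi=0$ reads $\mathcal R_\mu(t)+\frac1t=x$. Its solution $t_*(x)$ is therefore the inverse of the map $K(t)=\mathcal R_\mu(t)+1/t$, i.e.\ $t_*(x)=G_\mu(x)$ for $|x|$ large. Since $\mu$ is compactly supported, $\mathcal R_\mu$ is analytic near $0$, so everything can be done as asymptotic expansions in $1/x$ at the level of formal power series. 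This suffices, because moments are exactly the coefficients of $G$ at infinity. The Laplace method then gives
\[
\log p_d(x)=d\,\Phi(t_*,x)-\tfrac12\log\big(\partial_t^2\Phi(t_*,x)\big)-\log t_*+C_d+O(1/d),
\]
where $C_d$ does not depend on $x$.

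\textbf{Step 3: differentiate and identify.} We have $G_{\mu_{p_d}}(x)=\frac1d\,\partial_x\log p_d(x)$. By the envelope theorem, $\partial_x[\Phi(t_*,x)]=t_*=G_\mu(x)$, which is the leading term. For the correction, differentiate $x=K(t_*)$ to get $K'(t_*)\,G_\mu'(x)=1$, and note $\partial_t^2\Phi(t_*,x)=K'(t_*)=1/G_\mu'(x)$. Hence
\[
-\tfrac12\log\partial_t^2\Phi=\tfrac12\log G_\mu'(x),\qquad \partial_x\Big(\tfrac12\log G_\mu'-\log G_\mu\Big)=\frac{G_\mu''}{2G_\mu'}-\frac{G_\mu'}{G_\mu}.
\]
This would give $G_{\mu_{p_d}}=G_\mu+\frac1d\big(\frac{G_\mu''}{2G_\mu'}-\frac{G_\mu'}{G_\mu}\big)+O(d^{-2})$ coefficientwise.

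\textbf{Interpretation of $m_n'$.} Under the definition as written, $m_n'=\lim_{d\to\infty}(m_n(p_d)-m_n(\mu))$, this limit is $0$, and the correction term found in Step 3 is $\lim_{d\to\infty}d\,(m_n(p_d)-m_n(\mu))$. I will read $m_n'$ in the latter sense, as the scaling of the cited theorem requires. Under that reading, matching coefficients of $x^{-n-1}$ identifies $G_{\mu'}$ with the stated formula.

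\textbf{Main obstacle.} The hard step is justifying the $1/d$ term rigorously: showing that the saddle-point expansion holds coefficientwise in $1/x$, uniformly enough to exchange it with extracting moments. I would first try to avoid analysis entirely. Treat $\Phi$ as a formal series and carry out the \emph{formal} Laplace expansion, which is a Gaussian-integral/Wick computation. Then check that each coefficient of $x^{-n-1}$ in $\frac1d\partial_x\log p_d$ is a polynomial in $1/d$ whose constant and linear parts are the ones above. As a fallback, I would verify the correction term combinatorially. Expand $m_n(p_d)$ in $r_\pi$ with $1/d$ weights; by the known moment–cumulant formula for finite free cumulants, the $1/d$ part comes from genus-one-type corrections. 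Then compare this with the coefficients of $G''/(2G')-G'/G$.
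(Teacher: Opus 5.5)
The paper gives no proof of this statement. It is quoted from \cite{arizmendi2023finite}, where it is derived combinatorially from the $1/d$ (genus) expansion of moments in terms of finite free cumulants. Your saddle-point route is genuinely different, and it works once one sign error is fixed.

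\textbf{The sign error.} The sign convention you single out is wrong. Since $(-dt)^n/(-d)^{n-1}=-d\,t^n$, one has $\log E_d(-dt)=-d\sum_n r_nt^n/n$. The phase is therefore
\[
\Phi(t,x)=-\sum_n\tfrac{r_n}{n}t^n+tx-\log t .
\]
As a check, for $\mu=\delta_c$ one gets $E_d(-dt)=e^{-cdt}$ and $p_d=(x-c)^d$. With your plus sign, $\partial_t\Phi=0$ would give $x=1/t-\mathcal R_\mu(t)$, not $x=K(t)$. Your critical-point equation is right only because a second sign slip cancels the first. With the corrected phase, $\partial_t^2\Phi(t_*,x)=-K'(t_*)=-1/G_\mu'(x)$, which is positive for large real $x$. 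That positivity makes the circle $|t|=|t_*|$ a steepest-descent contour and removes the branch issue in $\tfrac12\log G_\mu'(x)$. The sign disappears after $\partial_x\log$, so Step 3 stands.

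\textbf{Normalizations.} Your reading of $m_n'$ as $\lim_{d\to\infty} d\,(m_n(p_d)-m_n(\mu))$ is the normalization of \cite{arizmendi2023finite}; the paper's display drops the factor $d$. Likewise, the paper's $\mu(0_n,\pi)$ in the definition of $k_n^{(d)}$ should be $\mu(\pi,1_n)$, which is what your Step 1 uses.

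\textbf{The step you call the main obstacle.} This step is routine; you need neither a formal Wick expansion nor the combinatorial fallback. Parametrize $t=t_*(x)e^{i\theta}$ with $\theta\in[-\pi,\pi]$; for $|x|$ large this circle lies where $E_d(-dt)$ converges. Using $t_*x=1+\sum_n r_nt_*^n$, the linear term in $\theta$ cancels and
\[
\Phi(t_*e^{i\theta},x)-\Phi(t_*,x)=(e^{i\theta}-1-i\theta)+\sum_{n\ge1}r_nt_*^n\Big[(e^{i\theta}-1-i\theta)-\tfrac1n(e^{in\theta}-1-in\theta)\Big].
\]
Its real part is at most $-c\,\theta^2$ uniformly for $|x|\ge R$, because $|t_*|=O(1/|x|)$ and $|r_n|\le C^n$. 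Laplace's method then gives
\[
\log p_d(x)=d\,\Phi(t_*,x)-\tfrac12\log\big(t_*^2\partial_t^2\Phi(t_*,x)\big)+C_d+\epsilon_d(x),
\]
with $\epsilon_d$ analytic and $O(1/d)$ uniformly on $|x|\ge R$. This yields the rest:
\begin{itemize}
\item $p_d$ has no zeros in $|x|\ge R$ for large $d$. This is the uniform root bound you need in order to read $m_n(p_d)$ off the Laurent series at infinity.
\item Cauchy estimates give $\epsilon_d'=O(1/d)$ on $|x|=2R$.
\item Integrating $x^n G_{\mu_{p_d}}(x)$ over that circle gives $d\,(m_n(p_d)-m_n(\mu))\to m_n'$.
\end{itemize}

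\textbf{What each route buys.} The combinatorial proof expresses $m_n'$ explicitly through free cumulants and sits inside a genus expansion. Your route needs only $p_d=E_d(-\partial_x)x^d$. It gives the full expansion in powers of $1/d$ from higher Laplace corrections, and it never uses real-rootedness. It also explains the shape of the formula. The term $-G_\mu'/G_\mu=G_{\mathrm{MK}^{-1}(\mu)}$ is the contribution of the measure $dt/t$, i.e.\ the $-\log t_*$ term. The term $G_\mu''/(2G_\mu')$ comes from the Gaussian determinant. Together these give a direct reason for the Markov--Krein decomposition of $\mu'$ noted in the paper.
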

In particular, when $\text{MK}^{-1}(\mu)$  is again a probability measure,
\[
\mu^\prime
= \frac{1}{2}\big(
\text{MK}^{-1}(\mu)
- \text{MK}^{-1}(\text{MK}^{-1}(\mu))
\big).
\]
Consequently, the infinitesimal moments are explicitly given by
\begin{align*}
m_n^\prime
&= \frac{1}{2}\left[
p_n^{\text{MK}^{-1}(\mu)}
- p_n^{\text{MK}^{-1}(\text{MK}^{-1}(\mu))}
\right] \\
&= \frac{1}{2}\left[
\sum_{\pi \in \mathcal{CI}(n)} (-1)^{\#(\pi)-1}
\, h^{\text{MK}(\tau)}_{\pi}
+
\sum_{\pi \in \mathcal{CI}(n)}
\prod_{V\in \pi}
\left(
\sum_{\sigma_V\in \mathcal{CI}(|V|)}(-1)^{\#(\sigma_V)}
h^{\text{MK}(\tau)}_{\sigma_V}
\right)
\right].
\end{align*}
\end{remark}

\subsection{Moment-cumulant formulas}
In this section, we present a unified description of multivariate moments, $(p_n)_{n\geq 1}$ in terms of the classical, boolean and free cumulants of $h$.
\subsubsection{Boolean moment-cumulant formula}
The next theorem establishes a new formula for $p_n$ in terms of the Boolean cumulants with respect to $h$.
\begin{theorem}
    Let $A_1,\ldots,A_n\in \mathcal{A}$ be random variables. Then
    \begin{equation*}
        p_n(A_1,\ldots,A_n)=\sum_{\pi\in\mathcal{CI}(n)}b_\pi^h[A_1,\ldots,A_n],
    \end{equation*}
    where $b_n^h$ represent the $n-$th Boolean cumulant with respect to $h$.
\end{theorem}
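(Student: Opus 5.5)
The plan is to read the statement directly off Lemma~\ref{lema-mom-cum-booleanos}, which already expresses $p_n$ through Boolean cumulants of $h$. The only work is to build a bijection between its index set and $\mathcal{CI}(n)$ that matches each summand with $b^h_\pi$. Recall that the lemma gives
\[
p_n(A_1,\ldots,A_n)=\sum_{l=1}^n\ \sum_{1\leq i(1)<\cdots<i(l)\leq n}\ \prod_{k=1}^l b^h_{i(k+1)-i(k)}(A_{i(k)},\ldots,A_{i(k+1)-1}),
\]
with indices taken cyclically and $i(l+1)=i(1)$. Its index set is the set of nonempty subsets $S=\{i(1)<\cdots<i(l)\}\subseteq[n]$. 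There are $2^n-1$ of them, which is exactly $|\mathcal{CI}(n)|$ by the lemma on cardinality.

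First, define a map $\Phi$ from nonempty subsets to $\mathcal{CI}(n)$. Interpret each $i(k)$ as a cut on the circle placed just before the point $i(k)$, as in Remark on cuts and Figure~\ref{fig:particiones_circulo}.
\begin{itemize}
\item If $l\geq 2$, set $\Phi(S)$ equal to the partition with blocks $\{i(k),i(k)+1,\ldots,i(k+1)-1\}$ taken cyclically. Each block is either an interval or the complement of an interval, so $\Phi(S)$ is of type (2) with at least two blocks.
\item If $l=1$ and $S=\{j\}$, the single block is all of $[n]$ read cyclically from $j$. Set $\Phi(\{j\})=\{1_{n,j}\}$ for $j\geq 2$, and $\Phi(\{1\})=1_n$.
\end{itemize}
The map $\Phi$ is injective, because the cut positions can be recovered from $\Phi(S)$. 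For $l\geq2$ they are the first elements of the blocks in cyclic order; for $l=1$ the starting point $j$ is recorded by the ordered tuple $1_{n,j}$. Injectivity together with the equality of cardinalities makes $\Phi$ a bijection.

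Second, check termwise that each summand equals $b^h_{\Phi(S)}[A_1,\ldots,A_n]$. This is where I expect the only real subtlety. The extension $f_\pi$ was defined with each block listed in increasing order, while the lemma evaluates a block that wraps around ($\{i(l),\ldots,n,1,\ldots,i(1)-1\}$) in cyclic order, and $b^h$ is not cyclically invariant. I would therefore state explicitly the convention used implicitly in Theorem~\ref{moment-moment}: for $\pi\in\mathcal{CI}(n)$, a block that is the complement of an interval, or an ordered block $1_{n,j}$, has its arguments read in the cyclic order starting after the gap. Under this convention the lemma's factor $b^h_{n+i(1)-i(l)}(A_{i(l)},\ldots,A_{i(1)-1})$ is exactly the factor of $b^h_{\Phi(S)}$ for the wrapped block. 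Likewise the $l=1$ term $b^h_n(A_j,\ldots,A_{j-1})$ is exactly $b^h_{\{1_{n,j}\}}$.

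With this identification, the right-hand side of Lemma~\ref{lema-mom-cum-booleanos} equals $\sum_{\pi\in\mathcal{CI}(n)}b^h_\pi[A_1,\ldots,A_n]$. All Boolean-cumulant signs are positive, so no cancellation bookkeeping as in Theorem~\ref{moment-moment} is needed. As a sanity check I would verify $n=2$: the formula gives $b_2^h(A,B)+b_2^h(B,A)+h_1(A)h_1(B)$. Substituting $b_2^h(A,B)=h_2(A,B)-h_1(A)h_1(B)$ and the analogous expression for $b_2^h(B,A)$ recovers the listed expansion of $p_2$.
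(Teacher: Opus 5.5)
Your proposal is correct and takes essentially the same approach as the paper. Both arguments read the identity directly off Lemma~\ref{lema-mom-cum-booleanos}, matching each nonempty index set $\{i(1)<\cdots<i(l)\}\subseteq[n]$ with a unique $\pi\in\mathcal{CI}(n)$ and identifying the corresponding product of Boolean cumulants with $b^h_\pi$; your version is more careful in that it makes explicit the one-block terms $\{1_{n,j}\}$ (with $\{1\}\mapsto 1_n$), the cardinality count $2^n-1$ for bijectivity, and the cyclic reading convention for wrapped blocks, which the paper uses only implicitly and which is genuinely needed since $b^h$ is not cyclically invariant.
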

\begin{proof}
    The proof is straightforward using Lemma \ref{lema-mom-cum-booleanos}, which states that
\begin{equation}\label{thm-boolean}
    p_n(A_1,\ldots,A_n)=\sum_{l=1}^n \sum_{1\leq i(1)<\cdots<i(l)\leq n} \prod_{k=1}^l b_{i(k+1)-i(k)}^h(A_{i(k)},A_{i(k)+1},\ldots,A_{i(k+1)-1}).
\end{equation}
Observe that each partition $\pi\in\mathcal{CI}(n)$ determines unique indices
$i(1),i(2),\ldots,i(s)$ such that
\begin{equation*}
    b_\pi^h[A_1,\ldots,A_n]=b^h(A_{i(1)},\ldots,A_{i(2)-1})b^h(A_{i(2)},\ldots,A_{i(3)-1})\cdots b^h(A_{i(s)},\ldots,A_{i(1)-1}),
\end{equation*}
and conversely, each collection of indices $\{i(1),\ldots,i(s)\}$ defines a unique partition $\pi\in\mathcal{CI}(n)$,
\[
\pi=\{\{i(1),\ldots,i(2)-1\},\{i(2),\ldots,i(3)-1\},\ldots,\{i(s),\ldots,i(1)-1\}\},
\]
such that
\begin{equation*}
    b_\pi^h[A_1,\ldots,A_n]=b^h(A_{i(1)},\ldots,A_{i(2)-1})b^h(A_{i(2)},\ldots,A_{i(3)-1})\cdots b^h(A_{i(s)},\ldots,A_{i(1)-1}).
\end{equation*}
This observation allows us to rewrite the relation \eqref{thm-boolean} as
\begin{equation*}
    p_n(A_1,\ldots,A_n)=\sum_{\pi\in\mathcal{CI}(n)}b_\pi^h[A_1,\ldots,A_n].
\end{equation*}
\end{proof}
\begin{remark}
    In the one-dimensional setting, if $\mu\in \mathcal{M}(\mathbb{R})$ and $\tau\in\mathcal{F}(\mathbb{R})$ are related through the Markov--Krein correspondence, then
    \begin{equation*}
        p_n(\tau)
        =
        \sum_{\pi\in \mathcal{CI}(n)}b_\pi^\mu,
    \end{equation*}
    where $b_n^\mu$ denotes the $n$-th Boolean cumulant of $\mu$. For
    \(
        \mu=\frac12\delta_{-2}+\frac12\delta_2,
    \)
    the Boolean cumulants are given by $b_1^\mu=0$, $b_2^\mu=4$, and $b_n^\mu=0$ for all $n\geq 3$. Hence, only pairing partitions contribute, and therefore
    \begin{equation*}
        p_{2n}(\tau)
        =
        \sum_{\pi\in \mathcal{CI}_2(2n)}
        \prod_{V\in\pi}b_{|V|}^\mu
        =4^n+4^n=
        2\cdot 4^n
        =
        2^{2n+1},
        \qquad n\geq 1,
    \end{equation*}
    and,
    \begin{equation*}
        p_{2n+1}(\tau)=0,
        \qquad n\geq 0.
    \end{equation*}
    These are precisely the moments of the signed measure
    \(
      \tau=2\mu-\delta_0,
    \)
    in agreement with Example~\ref{example-3}.
\end{remark}

\subsubsection{Free moment-cumulant formula}
The following result was originally proved in the univariate setting by T.~Hasebe and K.~Fujie \cite{katsunori}, in connection with the asymptotic behavior of spectra of principal submatrices of rotationally invariant random matrices. Related multivariate constructions have subsequently appeared in different areas of noncommutative probability, such as cyclic conditional freenes \cite{arizmendi} and free probability of type $B$ prime \cite{katsunori2023}. We present here a different proof based on combinatorial arguments.
\begin{theorem}\label{main-thm-cumulanteslibres}
    Let $A_1,\ldots,A_n\in \mathcal{A}$ be random variables. Then
    \begin{equation*}
        p_n(A_1,\ldots,A_n)=\sum_{\pi\in\mathcal{NC}(n)}(n+1-\#(\pi))r_\pi[A_1,\ldots,A_n],
    \end{equation*}
    where $r_n$ represent the $n-$th free cumulant with respect to $h$.
\end{theorem}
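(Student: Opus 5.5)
The plan is to derive the free formula from the Boolean formula just proved, $p_n(A_1,\ldots,A_n)=\sum_{\pi\in\mathcal{CI}(n)}b^h_\pi[A_1,\ldots,A_n]$, by expanding each Boolean cumulant into free cumulants with the multivariate version of \eqref{43} (see \cite{arizmendi2015relations}):
\[
b^h_m(B_1,\ldots,B_m)=\sum_{\sigma\in\mathcal{NC}_{irr}(m)}r_\sigma[B_1,\ldots,B_m].
\]
Each block $V$ of $\pi\in\mathcal{CI}(n)$, including the ordered full blocks $1_{n,j}$, is an arc of the circle read in its cyclic order. So $b^h_{|V|}$ becomes a sum over non-crossing partitions of that arc that are irreducible, meaning the first and last points of the arc lie in the same block.

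Collecting terms, $p_n$ is a sum over pairs $(\pi,\sigma)$, where $\sigma$ is a partition of $[n]$ refining $\pi$ and its restriction to every arc of $\pi$ is non-crossing and irreducible. Since the non-crossing condition is invariant under rotation, gluing the arcs gives $\sigma\in\mathcal{NC}(n)$. The free cumulant of each block of $\sigma$ is evaluated with its arguments read along the arc containing it. I will check that this is the convention implicit in $r_\pi[A_1,\ldots,A_n]$ here, exactly as $h_{1_{n,j}}$ and the wrapped blocks are read cyclically in Theorem~\ref{moment-moment}. The point needing care is the block of $\sigma$ containing the endpoints of a wrapping arc. Conversely, given $\sigma\in\mathcal{NC}(n)$, I must count the $\pi\in\mathcal{CI}(n)$ compatible with it, and the theorem reduces to showing that this number is $n+1-\#(\sigma)$.

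For the count I use the description of $\mathcal{CI}(n)$ by cuts: $\pi$ is a nonempty set $S$ of the $n$ gaps $\overline{1},\ldots,\overline{n}$, where $\overline{k}$ is the gap between $k$ and $k+1$ modulo $n$. $S$ is compatible with $\sigma$ if two conditions hold:
\begin{itemize}
\item[(a)] no block of $\sigma$ meets two different arcs;
\item[(b)] for any two cyclically consecutive cuts $\overline{a},\overline{b}\in S$, the points $a+1$ and $b$ lie in the same block of $\sigma$.
\end{itemize}
By Definition~\ref{def.kreweras}, $Kr(\sigma)$ lives exactly on these gaps and has $n+1-\#(\sigma)$ blocks. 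The claim I will prove is that the compatible cut sets $S$ are precisely the blocks of $Kr(\sigma)$.

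The two extreme cases check this. For $\sigma=0_n$ the only compatible set is $S$ equal to all gaps, which is the single block of $Kr(0_n)=1_n$. For $\sigma=1_n$ the compatible sets are the single cuts, which are the $n$ singleton blocks of $Kr(1_n)=0_n$. This also matches the low-order computations: $p_1=r_1$ and $p_2=2r_2+r_1^2$ in the univariate case.

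To prove the claim, first take a block $W$ of $Kr(\sigma)$. Between two cyclically consecutive gaps of $W$, the points of $[n]$ form an arc whose endpoints are connected in $\sigma$, by maximality of the Kreweras complement. No block of $\sigma$ crosses from one such arc to another, since that would cross $W$. So $W$ satisfies (a) and (b). Conversely, take a compatible $S$. Condition (a) together with $\sigma\cup\{S\}$ being non-crossing shows that adding $S$ as a single block keeps $\sigma\cup\{S\}$ non-crossing, so $S$ lies inside one block $W$ of $Kr(\sigma)$. Condition (b), applied along each arc, shows that any further gap of $W$ inside an arc would make the endpoints of that arc lie in different $\sigma$-blocks. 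Hence $S=W$.

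This combinatorial identification, compatible cut sets $=$ blocks of $Kr(\sigma)$, is the main obstacle. I will prove it carefully using the standard fact that $\overline{a}$ and $\overline{b}$ lie in the same block of $Kr(\sigma)$ if and only if the arcs $\{a+1,\ldots,b\}$ and $\{b+1,\ldots,a\}$ are unions of blocks of $\sigma$. Once the identification holds, each $r_\sigma$ appears exactly $\#(Kr(\sigma))=n+1-\#(\sigma)$ times, which gives the formula.
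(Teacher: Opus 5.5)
Your route is the paper's: the cyclic Boolean expansion of $p_n$, the expansion of each $b^h$ over irreducible non-crossing partitions, and the count of admissible cut sets for fixed $\sigma\in\mathcal{NC}(n)$ as the blocks of $Kr(\sigma)$. Your identification of compatible cut sets with blocks of $Kr(\sigma)$ is correct.

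The gap is the point you flagged and then postponed. The promised check that reading each block along its arc is ``the convention implicit in $r_\pi[A_1,\ldots,A_n]$'' cannot succeed. First, the paper defines $f_\pi$ with every block written in increasing order. More fundamentally, no convention attached to $\sigma$ alone can work, because the reading depends on which block $W$ of $Kr(\sigma)$ supplies the cuts. Only the block containing the gap $\overline{n}$ gives no wrapping arc, and hence the increasing reading. For every other $W$, the block of $\sigma$ containing the two endpoints of the wrapping arc $\{a+1,\ldots,n,1,\ldots,b\}$ is read in a nontrivially rotated order. Your own test case $\sigma=1_n$ shows this: the $n$ single cuts contribute $\sum_{a=0}^{n-1} r_n(A_{a+1},\ldots,A_n,A_1,\ldots,A_a)$, not $n\,r_n(A_1,\ldots,A_n)$. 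Since $h$ is a vector state, its free cumulants are not cyclically invariant. So the final step, ``each $r_\sigma$ appears exactly $n+1-\#(\sigma)$ times'', fails.

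This is not only a gap in the write-up: the displayed formula is false for non-tracial $h$ already at $n=2$. Your argument correctly gives $p_2(A,B)=r_2(A,B)+r_2(B,A)+r_1(A)r_1(B)$, which agrees with $p_2=h_2(A,B)+h_2(B,A)-h_1(A)h_1(B)$. The theorem instead asserts $2r_2(A,B)+r_1(A)r_1(B)$. Take the paper's matrices $A=\begin{bmatrix}1&1\\0&0\end{bmatrix}$, $B=\begin{bmatrix}1&0\\1&0\end{bmatrix}$ and $\xi=(1,0)$. Then $qAq=0$, so $p_2(A,B)=\operatorname{Tr}(AB)=2$, whereas $2r_2(A,B)+r_1(A)r_1(B)=2(2-1)+1=3$. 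Note also that the right-hand side of the theorem is not cyclically invariant, while $p_n$ is. The paper's proof passes over the same point when it writes $(\star)=\sum_{\pi\in C}r^h_\pi[A_1,\ldots,A_n]$.

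What your argument actually establishes is
\[
p_n(A_1,\ldots,A_n)=\sum_{\sigma\in\mathcal{NC}(n)}\ \sum_{W\in Kr(\sigma)} r^{W}_{\sigma}[A_1,\ldots,A_n],
\]
where $r^{W}_{\sigma}$ reads each block of $\sigma$ along the arc determined by $W$ that contains it. This reduces to the stated formula whenever the rotated readings coincide. That happens, for example, if the free cumulants of $h$ are cyclically invariant (as when $h$ is tracial), and in particular in the univariate case $A_1=\cdots=A_n$ treated by Hasebe and Fujie. To turn your plan into a proof, either add such a hypothesis or state the theorem in this cyclically-read form.
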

\begin{proof}
    Using formula \eqref{43}, we can express Boolean cumulants in terms of free cumulants via irreducible partitions. That is, if $r_n^h$ denotes the $n$-th free cumulant with respect to $h$, then
    \begin{equation*}
        b_n^h(A_1,\ldots,A_n)=\sum_{\pi\in \mathcal{NC}_{irr}(n)}r_\pi^h[A_1,\ldots,A_n],
    \end{equation*}
     where $\mathcal{NC}_{\mathrm{irr}}(n)$ denotes the set of irreducible non-crossing partitions of $[n]$.
If we consider $l \in \{1,2,\ldots,n\}$ and $1 \leq i(1) < \cdots < i(l) \leq n$, with the convention that $i(n+1) = i(1)$, then
    \begin{align*}
        b_{i(k+1)-i(k)}^h(A_1,\ldots,A_n)=&\sum_{\stackbin[]{}{\pi_k\in\mathcal{NC}_{irr}(i(k),i(k)+1,\ldots,i(k+1)-1)}}r^h_{\pi_k}[A_{i(k)},A_{i(i)+1},\ldots,A_{i(k+1)-1}]\\ 
        =&\sum_{\stackbin[i(k+1)-1\sim_{\pi_k} i(k)]{}{\pi_k\in\mathcal{NC}(i(k),i(k)+1,\ldots,i(k+1)-1)}}r^h_{\pi_k}[A_{i(k)},A_{i(i)+1},\ldots,A_{i(k+1)-1}],
    \end{align*}
    for all $k=1,2,\ldots,l$. Substituting the previous equivalence into expression \eqref{momentos_tau_cumulantes_booleanos_mu}, we obtain
    \begin{align*}
       p_n&(A_1,\ldots,A_n)\\
       =&\sum_{l=1}^n\sum_{1\leq i(1)<\cdots<i(l)\leq n}\quad \underbrace{\prod_{k=1}^l\quad 
        \sum_{\stackbin[i(k+1)-1\sim_{\pi_k} i(k)]{}{\pi_k\in\mathcal{NC}((i(k),i(k)+1,\ldots,i(k+1)-1))}}r^h_{\pi_k}[A_{i(k)},A_{i(i)+1},\ldots,A_{i(k+1)-1}]}_{(\star)}.
    \end{align*}
    Note that, by the way the partitions $\pi_k$ are chosen, they are pairwise disjoint. Consider $\pi=\pi_1\cup\cdots\cup\pi_l$ and denote by
\begin{equation*}
    C:=\{\pi:i(2)-1\sim_{\pi_1} i(1),\, i(3)-1\sim_{\pi_2} i(2),\, \ldots,\, i(1)-1\sim_{\pi_l} i(l)\},
\end{equation*}
the set on which the events $\{i(k+1)-1\sim_{\pi_k} i(k)\}$ occur for each $k=1,2,\ldots,l$. Observe, in addition, that the set $C$ can be represented as
\begin{equation*}
    C=\{\pi:\{i(2)-1,i(3)-1,\ldots,i(n)-1,i(1)-1\}\in Kr(\pi)\},
\end{equation*}
where $Kr(\pi)$ denotes the Kreweras complement of $\pi$. Then the term $(\star)$ can be represented as follows:
\begin{align*}
    (\star)=&\prod_{k=1}^l
    \sum_{\stackbin[i(k+1)-1\sim_{\pi_k} i(k)]{}{\pi_k\in\mathcal{NC}((i(k),i(k)+1,\ldots,i(k+1)-1))}}
    r^h_{\pi_k}[A_{i(k)},A_{i(k)+1},\ldots,A_{i(k+1)-1}]\\
    =&\left[\sum_{\stackbin[i(2)-1\sim_{\pi_1}i(1)]{}{\pi_1\in\mathcal{NC}(i(1),\ldots,i(2)-1)}}r^h_{\pi_1}[A_{i(1)},\ldots,A_{i(2)-1}]\right]\cdots
    \left[\sum_{\stackbin[i(1)-1\sim_{\pi_l}i(l)]{}{\pi_l\in\mathcal{NC}(i(l),\ldots,i(1)-1)}}r^h_{\pi_l}[A_{i(l)},\ldots,A_{i(1)-1}]\right] \\
    =&\sum_{\stackbin[\pi\in C]{}{\pi\in\mathcal{NC}(n)}}r^h_\pi[A_1,A_2,\ldots,A_n],
\end{align*}
which implies that
\begin{align*}
     p_n(A_1,\ldots,A_n)
    =&\sum_{l=1}^n\sum_{1\leq i(1)<\cdots<i(l)\leq n}
    \sum_{\stackbin[\pi\in C]{}{\pi\in\mathcal{NC}(n)}}r^h_\pi[A_1,\ldots,A_n]\\
    =&\sum_{\stackbin[]{}{\pi\in\mathcal{NC}(n)}}\sum_{l=1}^n\sum_{\stackbin[\pi\in C]{1\leq i(1)<\cdots<i(l)\leq n}{}}
    r^h_\pi[A_1,\ldots,A_n]\\
    =&\sum_{\pi\in\mathcal{NC}(n)}\sum_{\sigma\in Kr(\pi)}r^h_\pi[A_1,\ldots,A_n].
\end{align*}
Finally, since $|Kr(\pi)|=n+1-\#(\pi)$, we conclude that
\begin{equation*}
    p_n(A_1,\ldots,A_n)=\sum_{\pi\in\mathcal{NC}(n)}(n+1-\#(\pi))\,r^h_\pi[A_1,\ldots,A_n].
\end{equation*}
\end{proof}
\subsubsection{Classical moment-cumulant formula}
In the following result, we present a new formula expressing $p_n$ in therms to the classical cumulant with respect to $h$.
\begin{theorem}\label{main-thm-cumulantesclasicos}
    Let $A_1,\ldots,A_n\in \mathcal{A}$ be random variables. Then
    \begin{equation*}
        p_n(A_1,\ldots,A_n)=\sum_{\pi\in\mathcal{P}(n)}(n+1-|\overline{\pi}|)c_\pi^h[A_1,\ldots,A_n],
    \end{equation*}
    where $c_n^h$ represent the $n-$th classical cumulant with respect to $h$.
\end{theorem}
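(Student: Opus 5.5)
We derive this from Theorem~\ref{main-thm-cumulanteslibres} by substituting the multivariate version of \eqref{41}, established by Belinschi and Nica. For every $\sigma\in\mathcal{NC}(n)$ with blocks $W_1,\ldots,W_s$, applying \eqref{41} blockwise gives
\[
r^h_\sigma[A_1,\ldots,A_n]=\prod_{t=1}^s\ \sum_{\rho_t\in\mathcal{P}_{conn}(W_t)} c^h_{\rho_t}[A_1,\ldots,A_n]
=\sum_{\substack{\pi\in\mathcal{P}(n)\\ \overline{\pi}=\sigma}} c^h_\pi[A_1,\ldots,A_n].
\]
Here $\mathcal{P}_{conn}(W_t)$ means connected partitions of $W_t$ in its induced order. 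The second equality uses the fact that a partition $\pi\le\sigma$ has non-crossing closure exactly $\sigma$ if and only if, for each block $W_t$ of $\sigma$, the restriction $\pi|_{W_t}$ is connected as a partition of $W_t$. Since $\pi$ and its blocks are determined by these restrictions, the map $(\rho_1,\ldots,\rho_s)\mapsto \rho_1\cup\cdots\cup\rho_s$ is a bijection onto $\{\pi:\overline{\pi}=\sigma\}$. The functionals $c^h_\pi$ factor over blocks, so the product expands correctly.

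Substituting this into Theorem~\ref{main-thm-cumulanteslibres} and exchanging the order of summation gives
\[
p_n(A_1,\ldots,A_n)=\sum_{\sigma\in\mathcal{NC}(n)}(n+1-\#(\sigma))\sum_{\overline{\pi}=\sigma}c^h_\pi
=\sum_{\pi\in\mathcal{P}(n)}(n+1-\#(\overline{\pi}))\,c^h_\pi[A_1,\ldots,A_n],
\]
because every $\pi\in\mathcal{P}(n)$ has a unique non-crossing closure. Reading $|\overline{\pi}|$ in the statement as the number of blocks $\#(\overline{\pi})$, this is exactly the claim.

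The main obstacle is justifying the characterization used in the first step: $\overline{\pi}=\sigma$ if and only if $\pi\le\sigma$ and each $\pi|_{W}$ is connected within its block $W$. I would prove it as follows.

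\emph{Forward direction.} Suppose $\overline{\pi}=\sigma$. Let $W$ be a block of $\sigma$, and let $\tau$ be the non-crossing closure of $\pi|_W$ inside $W$, viewed as a partition of $W$. Replacing $W$ in $\sigma$ by the blocks of $\tau$ produces a partition $\sigma'$ that is still non-crossing, since it refines a non-crossing partition inside one of its blocks. Moreover $\pi\le\sigma'\le\sigma$, so minimality of the closure forces $\sigma'=\sigma$, that is, $\tau=1_W$.

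\emph{Converse.} Suppose $\pi\le\sigma$ with each $\pi|_W$ connected in $W$. Then $\overline{\pi}\le\sigma$, and $\overline{\pi}|_W$ is a non-crossing partition of $W$ lying above $\pi|_W$. By connectivity of $\pi|_W$ it must equal $1_W$, so $\overline{\pi}\ge\sigma$ and hence $\overline{\pi}=\sigma$.

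This is standard (it is how \eqref{41} is proved), so I can cite it briefly; the remaining steps are routine bookkeeping.
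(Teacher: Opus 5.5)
Your proposal is correct and follows the paper's proof: you substitute the blockwise extension $r^h_\sigma=\sum_{\overline{\pi}=\sigma}c^h_\pi$ of \eqref{41} into Theorem~\ref{main-thm-cumulanteslibres}, then regroup using the uniqueness of the non-crossing closure. The one addition is that you prove the characterization of $\{\pi:\overline{\pi}=\sigma\}$ through connected restrictions to the blocks of $\sigma$, which the paper only asserts, and your forward and converse arguments for it are sound.
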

\begin{proof}
    We know from the Theorem \ref{main-thm-cumulanteslibres} that the moments of the inverse of the Markov--Krein transform can be written in terms of the free cumulants of the Markov-Krein transform as follows:
\begin{equation}\label{23}
    p_n(A_1,\ldots,A_n)=\sum_{\pi\in\mathcal{NC}(n)}(n+1-\#(\pi))r_\pi^h[A_1,\ldots,A_n].
\end{equation}
Using the formula expressing free cumulants in terms of classical cumulants \eqref{41}, we have
\begin{equation*}
      r_n^h(A_1,\ldots,A_n)=\sum_{\stackbin[]{}{\sigma\in\mathcal{P}_{conn}(n)}}c_\sigma^h[A_1,\ldots,A_n],
\end{equation*}
which can be extended to a family of multilinear functionals $r_\pi^h$ ($\pi\in\mathcal{P}(n)$, $n\geq 1$) as follows:
\begin{equation*}
    r_\pi^h[A_1,\ldots,A_n]=\sum_{\stackbin[\overline{\sigma}=\pi]{}{\sigma\in\mathcal{P}(n)}}c_\sigma^h[A_1,\ldots,A_n].
\end{equation*}
Substituting the previous expression into \eqref{23}, we obtain
\begin{align*}
    p_n(A_1,\ldots,A_n)=&\sum_{\pi\in\mathcal{NC}(n)}(n+1-\#(\pi))\sum_{\stackbin[\overline{\sigma}=\pi]{\sigma\in\mathcal{P}(n)}{}}c_\sigma^h[A_1,\ldots,A_n]\\ 
    =&\sum_{\pi\in\mathcal{NC}(n)}\sum_{\stackbin[\overline{\sigma}=\pi]{}{\sigma\in\mathcal{P}(n)}}(n+1-\#(\pi))c_\sigma^h[A_1,\ldots,A_n]\\ 
    =&\sum_{\pi\in\mathcal{NC}(n)}\sum_{\stackbin[\overline{\sigma}=\pi]{}{\sigma\in\mathcal{P}(n)}}(n+1-\#(\overline{\sigma}))c_\sigma^h[A_1,\ldots,A_n].
\end{align*}
Finally, note that for $\sigma\in\mathcal{P}(n)$ there is only one partition $\pi\in\mathcal{NC}(n)$ such that $\overline{\sigma}=\pi$. Then 
\begin{equation*}
    p_n(A_1,\ldots,A_n)=\sum_{\sigma\in\mathcal{P}(n)}(n+1-\#(\overline{\sigma}))c_\sigma^h[A_1,\ldots,A_n].
\end{equation*}
\end{proof}
\begin{remark}
     In \cite{interlacingmeasures}, Kerov proved that if $\mu\sim N(0,1)$ and $\tau$ denotes the inverse of the Markov--Krein transform, then $\tau$ is the probability measure with density 
    \begin{equation*}
        \frac{d\mu(t)}{dt}=\frac{1}{\sqrt{2\pi}}\frac{1}{|D_{-1}(it)|^2},\qquad -\infty<t<\infty,
    \end{equation*}
    where $D_{-1}(z)=e^{z^2/2}\int_z^\infty e^{-x^2/2}dx,$
    is a parabolic cylinder function. Since the classical cumulants of $\mu\sim N(0,1)$ are \((c_1,c_2,c_3,\ldots)=(0,1,0,\ldots)
    \), Theorem~\ref{main-thm-cumulantesclasicos} provides a explicit formula for the moments of $\tau$
    \begin{equation*}
        p_{2n}(\tau)=\sum_{\pi\in\mathcal{P}_2(2n)}(2n+1-\#(\overline{\pi})).
    \end{equation*}
    The first few moments are given by the numbers
    \begin{equation*}
        p_0(\tau)=1,\quad p_2(\tau)=2,\quad p_4(\tau)=10,\quad p_6(\tau)=74,\quad p_8(\tau)=706,\quad p_{10}(\tau)=8162.
    \end{equation*}
    These numbers have appeared in several combinatorial contexts since the 1950s. For example, Touchard \cite{touchard1952probleme} interpreted $p_n$ as the number of indecomposable systems of $(n+1)$ arcs. These numbers also appear in the combinatorics of continued fractions and Motzkin-type paths, as discussed by Kerov in \cite{interlacingmeasures}.
    
\end{remark}

\subsection*{Future research}
\begin{itemize}
    \item[(1)] According to the formulas obtained in this work, it is natural
    to investigate whether one can establish a formula expressing \(p_n\) in
    terms of the monotone cumulants with respect to \(h\). After several
    attempts to derive such a formula, we observed that the underlying
    combinatorial structure becomes particularly interesting, especially due
    to a certain cyclic behavior appearing in monotone partitions and their
    associated nesting trees.
    \item[(2)] There exists a generalization of the Markov--Krein transform in
which a probability measure \(\mu\in\mathcal{M}(\mathbb{R})\) and a
probability measure \(\nu\in\mathcal{M}(\mathbb{R})\), satisfying
\(
\int_{\mathbb{R}}\log(1+|t|)\,\nu(dt)<\infty,
\)
are linked through the generalized Markov--Krein transform with parameter
\(c>0\) by the identity
\[
\int_{-\infty}^{\infty}\frac{\mu(dt)}{(z-t)^c}
=
\exp\left(
-c\int_{-\infty}^{\infty}\log(z-u)\,\nu(du)
\right).
\]
The measures \(\mu\) and \(\nu\) uniquely determine each other; see
\cite{lijoi2004means}. This generalization appears in several contexts
\cite{benaych2022matrix,mergny2022rank}. The question is whether there exists a multivariate version
of the Markov--Krein transform in this more general setting.
\end{itemize}
\subsection*{Acknowledgments} 
\noindent
Most of the results presented in this work were obtained during the author's master's thesis at CIMAT under the supervision of Octavio Arizmendi and Josué Vázquez. The author is deeply grateful to them for their guidance, support, valuable discussions, and for taking the time to carefully read this manuscript.
\bibliography{ref}

@article{interlacingmeasures,
  title={Interlacing measures},
  author={Kerov, Sergei},
  journal={Translations of the American Mathematical Society-Series 2},
  volume={181},
  pages={35--84},
  year={1997},
  publisher={Providence [etc.] American Mathematical Society, 1949-}
}

@article{katsunori,
  author  = {Fujie, Katsunori and Hasebe, Takahiro},
  title   = {The Spectra of Principal Submatrices in Rotationally Invariant Hermitian Random Matrices and the Markov--Krein Correspondence},
  journal = {ALEA: Latin American Journal of Probability and Mathematical Statistics},
  volume  = {19},
  number  = {1},
  pages   = {109--123},
  year    = {2022},
  doi     = {10.30757/ALEA.v19-05},
  issn    = {1980-0436},
  mrnumber = {4359787}
}

@article{flajolet1980combinatorial,
  title={Combinatorial aspects of continued fractions},
  author={Flajolet, Philippe},
  journal={Discrete Mathematics},
  volume={32},
  number={2},
  pages={125--161},
  year={1980},
  publisher={Elsevier}
}

@article{lifshits1952problem,
  title={On a problem of the theory of perturbations connected with quantum statistics},
  author={Lifshits, Il'ya Mikhailovich},
  journal={Uspekhi matematicheskikh nauk},
  volume={7},
  number={1},
  pages={171--180},
  year={1952},
  publisher={Russian Academy of Sciences, Steklov Mathematical Institute of Russian~…}
}

@article{yongdiagrams-kerov,
  title={A q-analog of the hook walk algorithm and random Young tableaux},
  author={Kerov, S},
  journal={Functional Analysis and Its Applications},
  volume={26},
  number={3},
  pages={179--187},
  year={1992},
  publisher={Springer}
}

@book{harary2014graphical,
  title={Graphical enumeration},
  author={Harary, Frank and Palmer, Edgar M},
  year={2014},
  publisher={Elsevier}
}

@book{OnExponential,
  title={On exponential representations of analytic functions in the upper half-plane with positive imaginary part},
  author={Aronszajn, Nachman and Donoghue, William F},
  volume={20},
  pages={321-388},
  year={1957},
  publisher={University of Kansas, Department of Mathematics}
}

@book{kreuin1977markov,
  title={The Markov moment problem and extremal problems},
  author={Krein, Mark Grigorévich and Nudel, AA and Louvish, D}, 
  year={1977},
  publisher={American Mathematical Society}
}

@article{katsunori2023,
  author  = {Fujie, Katsunori and Hasebe, Takahiro},
  title   = {Free Probability of Type B Prime},
  journal = {Transactions of the American Mathematical Society},
  volume  = {378},
  number  = {8},
  pages   = {5551--5577},
  year    = {2025},
  doi     = {10.1090/tran/9464}
}

@article{arizmendi2023cyclic,
  title={Cyclic independence: Boolean and monotone},
  author={Arizmendi, Octavio and Hasebe, Takahiro and Lehner, Franz},
  journal={Algebraic Combinatorics},
  volume={6},
  number={6},
  pages={1697--1734},
  year={2023}
}

@article{diaz2022combinatorics,
  title={Combinatorics of NC-probability spaces with independent constants},
  author={Diaz-Aguilera, Carlos and Gaxiola, Tulio and Santos, Jorge and Vargas, Carlos},
  journal={Infinite Dimensional Analysis, Quantum Probability and Related Topics},
  volume={25},
  number={02},
  pages={2250009},
  year={2022},
  publisher={World Scientific}
}

@article{mingo2024infinitesimal,
  title={Infinitesimal operators and the distribution of anticommutators and commutators},
  author={Mingo, James A and Tseng, Pei-Lun},
  journal={Journal of Functional Analysis},
  volume={287},
  number={9},
  pages={110591},
  year={2024},
  publisher={Elsevier}
}

@article{arizmendi,
  title={Combinatorics of cyclic-conditional freeness},
  author={Arizmendi, Octavio and C{\'e}bron, Guillaume and Gilliers, Nicolas},
  journal={arXiv preprint arXiv:2311.13178},
  year={2023}
}

@article{marcus2022finite,
  title={Finite free convolutions of polynomials},
  author={Marcus, Adam W and Spielman, Daniel A and Srivastava, Nikhil},
  journal={Probability Theory and Related Fields},
  volume={182},
  number={3},
  pages={807--848},
  year={2022},
  publisher={Springer}
}

@article{arizmendi2025finite,
  title={Finite Free Convolution: Infinitesimal Distributions},
  author={Arizmendi, Octavio and Perales, Daniel and Vazquez-Becerra, Josue},
  journal={arXiv preprint arXiv:2505.01705},
  year={2025}
}

@book{mingo2017free,
  title={Free probability and random matrices},
  author={Mingo, James A and Speicher, Roland},
  volume={35},
  year={2017},
  publisher={Springer}
}

@article{touchard1952probleme,
  title={Sur un probleme de configurations et sur les fractions continues},
  author={Touchard, Jacques},
  journal={Canadian Journal of Mathematics},
  volume={4},
  pages={2--25},
  year={1952},
  publisher={Cambridge University Press}
}

@article{arizmendi2018cumulants,
  title={Cumulants for finite free convolution},
  author={Arizmendi, Octavio and Perales, Daniel},
  journal={Journal of Combinatorial Theory, Series A},
  volume={155},
  pages={244--266},
  year={2018},
  publisher={Elsevier}
}

@article{arizmendi2023finite,
  title={Finite free cumulants: Multiplicative convolutions, genus expansion and infinitesimal distributions},
  author={Arizmendi, Octavio and Garza-Vargas, Jorge and Perales, Daniel},
  journal={Transactions of the American Mathematical Society},
  volume={376},
  number={06},
  pages={4383--4420},
  year={2023}
}

@article{arizmendi2016free,
  title={Free subordination and Belinschi--Nica semigroup},
  author={Arizmendi, Octavio and Hasebe, Takahiro},
  journal={Complex analysis and operator theory},
  volume={10},
  pages={581--603},
  year={2016},
  publisher={Springer}
}

@incollection{Krein-teoria-operadores,
  title={On certain new studies in the perturbation theory for selfadjoint operators},
  author={Krein, MG},
  booktitle={Topics in Differential and Integral Equations and Operator Theory},
  pages={107--172},
  year={1983},
  publisher={Springer}
}

@article{speicher1997boolean,
  title={Boolean convolution, Free Probability Theory, Proceedings, Toronto, Canada, 1995},
  author={Speicher, Roland},
  journal={Fields Inst Commun.},
  volume={12},
  pages={267--279},
  year={1997},
  publisher={Amer. Math. Soc.}
}

@article{speicher1994multiplicative,
  title={Multiplicative functions on the lattice of non-crossing partitions and free convolution},
  author={Speicher, Roland},
  journal={Mathematische Annalen},
  volume={298},
  number={1},
  pages={611--628},
  year={1994},
  publisher={Springer}
}

@incollection{rota1964foundations,
  title={On the foundations of combinatorial theory: I. Theory of M{\"o}bius functions},
  author={Rota, Gian-Carlo},
  booktitle={Classic Papers in Combinatorics},
  pages={332--360},
  year={1964},
  publisher={Springer}
}

@article{Schutzenberger1947,
  author  = {Sch{\"u}tzenberger, Marcel-Paul},
  title   = {Sur certains param{\`e}tres caract{\'e}ristiques des syst{\`e}mes d'{\'e}v{\'e}nements compatibles et d{\'e}pendants et leur application au calcul des cumulants de la r{\'e}p{\'e}tition},
  journal = {C. R. Acad. Sci. Paris},
  volume  = {225},
  year    = {1947},
  pages   = {277--278}
}

@book{aigner2012combinatorial,
  title={Combinatorial theory},
  author={Aigner, Martin},
  volume={234},
  year={2012},
  publisher={Springer Science \& Business Media}
}

@article{stanley2011enumerative,
  title={Enumerative Combinatorics Volume 1 second edition},
  author={Stanley, Richard P},
  journal={Cambridge studies in advanced mathematics},
  year={2011}
}

@book{book:nica,
  title={Lectures on the combinatorics of free probability},
  author={Nica, Alexandru and Speicher, Roland},
  volume={13},
  year={2006},
  publisher={Cambridge University Press}
}

@article{lehner2002free,
  title={Free cumulants and enumeration of connected partitions},
  author={Lehner, Franz},
  journal={European Journal of Combinatorics},
  volume={23},
  number={8},
  pages={1025--1031},
  year={2002},
  publisher={Elsevier}
}

@article{arizmendi2015relations,
  title={Relations between cumulants in noncommutative probability},
  author={Arizmendi, Octavio and Hasebe, Takahiro and Lehner, Franz and Vargas, Carlos},
  journal={Advances in Mathematics},
  volume={282},
  pages={56--92},
  year={2015},
  publisher={Elsevier}
}

@article{belinschi2008eta,
  title={$\eta$-series and a Boolean Bercovici--Pata bijection for bounded k-tuples},
  author={Belinschi, Serban T and Nica, Alexandru},
  journal={Advances in Mathematics},
  volume={217},
  number={1},
  pages={1--41},
  year={2008},
  publisher={Elsevier}
}

@book{shohat1943problem,
  title={The problem of moments},
  author={Shohat, James Alexander and Tamarkin, Jacob David},
  number={1},
  year={1943},
  publisher={American Mathematical Soc.}
}

@article{fevrier2010infinitesimal,
  title={Infinitesimal non-crossing cumulants and free probability of type B},
  author={F{\'e}vrier, Maxime and Nica, Alexandru},
  journal={Journal of Functional Analysis},
  volume={258},
  number={9},
  pages={2983--3023},
  year={2010},
  publisher={Elsevier}
}

@article{belinschi2012free,
  title={Free probability of type B: analytic interpretation and applications},
  author={Belinschi, Serban T and Shlyakhtenko, Dimitri},
  journal={American Journal of Mathematics},
  volume={134},
  number={1},
  pages={193--234},
  year={2012},
  publisher={Johns Hopkins University Press}
}

@article{shlyakhtenko2018free,
  title={Free probability of type-B and asymptotics of finite-rank perturbations of random matrices},
  author={Shlyakhtenko, Dimitri},
  journal={Indiana University Mathematics Journal},
  volume={67},
  number={2},
  pages={971--991},
  year={2018},
  publisher={JSTOR}
}

@article{mingo2019non,
  title={Non-crossing annular pairings and the infinitesimal distribution of the goe},
  author={Mingo, James A},
  journal={Journal of the London Mathematical Society},
  volume={100},
  number={3},
  pages={987--1012},
  year={2019},
  publisher={Wiley Online Library}
}

@article{mergny2022rank,
  title={Rank one HCIZ at high temperature: interpolating between classical and free convolutions},
  author={Mergny, Pierre and Potters, Marc},
  journal={SciPost Physics},
  volume={12},
  number={1},
  pages={022},
  year={2022}
}

@article{lijoi2004means,
  title={Means of a Dirichlet process and multiple hypergeometric functions},
  author={Lijoi, Antonio and Regazzini, Eugenio},
  year={2004}
}

@article{benaych2022matrix,
  title={Matrix addition and the Dunkl transform at high temperature},
  author={Benaych-Georges, Florent and Cuenca, Cesar and Gorin, Vadim},
  journal={Communications in Mathematical Physics},
  volume={394},
  number={2},
  pages={735--795},
  year={2022},
  publisher={Springer}
}
\bibliographystyle{alpha}
\end{document}